\def\thetitle{{Coxeter groups, hyperbolic cubes, and acute triangulations}}
\let\@@enum@org\@@enum@
\def\@@enum@[#1]{\@@enum@org[\normalfont #1]}
\newcommand\form[1]{\langle #1\rangle}
\newtheorem{theorem}{Theorem}[section]
\newtheorem{lemma}[theorem]{Lemma}
\newtheorem{corollary}[theorem]{Corollary}
\newtheorem{conjecture}[theorem]{Conjecture}
\newtheorem{proposition}[theorem]{Proposition}
\newtheorem{question}[theorem]{Question}
\newtheorem{observation}[theorem]{Observation} 
\theoremstyle{definition}
\newtheorem{definition}[theorem]{Definition}
\newtheorem{example}[theorem]{Example}
\newtheorem{remark}[theorem]{Remark}
\newcommand\co{\colon}
\newcommand\EE{\mathbb{E}^2}
\newcommand\HH{\mathbb{H}^2}
\newcommand\HHH{\mathbb{H}^3}
\newcommand\ZZ{\mathbb{Z}/2\mathbb{Z}}
\newcommand\Z{\mathbb{Z}}
\newcommand\CAT{\mathrm{CAT}}
\newcommand{\area}{\ensuremath \mathrm{Area}}
\newcommand\link{\operatorname{Link}}
\title{Coxeter groups, hyperbolic cubes, and acute triangulations} 
\author{S. Kim and G. S. Walsh} 
\begin{document} \maketitle

\abstract{Let $C(L)$ be the right-angled Coxeter group defined by an abstract triangulation $L$ of $\mathbb{S}^2$.  We show that $C(L)$ is isomorphic to a hyperbolic right-angled reflection group if and only if $L$ can be realized as an acute triangulation.  The proof relies on the theory of $\CAT(-1)$ spaces. A corollary is that an abstract triangulation of $\mathbb{S}^2$ can be realized as an acute triangulation exactly when it satisfies a combinatorial condition called ``flag no-square''. We also study generalizations of this result to other angle bounds, other planar surfaces and other dimensions.}

\section{Introduction}\label{s:intro}
Here we study Coxeter groups and a \emph{geometric} condition on their defining graphs which is equivalent to these groups being one-ended and word-hyperbolic.  

For a simplicial complex $L$ the \emph{right-angled Coxeter group $C(L)$ defined by $L$} is the group with presentation: 
\[\form{v\in L^{(0)} \big| a^2 =1\text{ for }a\in L^{(0)},\;\; [a,b] = 1 \text{ for } \{a,b\} \in L^{(1)}}.\]
See~\cite{Davisbook} for extensive collection of results on these groups and on general Coxeter groups.

An \emph{abstract triangulation} of the unit two-sphere $\mathbb{S}^2$ means a simplicial complex homeomorphic to $\mathbb{S}^2$. 
An \emph{acute triangulation} of $\mathbb{S}^2$ is a triangulation of $\mathbb{S}^2$ into geodesic triangles whose dihedral angles are all acute.
We say an abstract triangulation $L$ of $\mathbb{S}^2$ is \emph{realized by an acute triangulation of $\mathbb{S}^2$}  (or in short, \emph{acute}) if there is an acute triangulation $T$ of $\mathbb{S}^2$ and a simplicial homeomorphism from $T$ to $L$.

The main question of this paper is the following.

\begin{question}\label{que:main}
When is an abstract triangulation of $\mathbb{S}^2$ acute?
\end{question}

The boundary of an icosahedron is an example of an abstract triangulation of $\mathbb{S}^2$ that is acute.
Indeed, an equilateral spherical triangle of dihedral angle $2\pi/5$ gives the icosahedral tessellation of $\mathbb{S}^2$.
If a triangulation of $\mathbb{S}^2$ by geodesic triangles has a vertex whose degree is three, then at least one of the three dihedral angles at that vertex is not acute, since the angles must add up to $2 \pi$.  Hence the boundary of a tetrahedron is an easy non-example.
% since the degree of each vertex is three.
The question is more subtle in many other cases. 
\begin{example} 
Consider the abstract triangulations of $\mathbb{S}^2$ obtained by doubling the squares in Figure~\ref{fig:oum} along their boundaries. Figure~\ref{fig:oum} (a) is due to Oum (private communication).  Every vertex in these triangulations of $\mathbb{S}^2$ has degree strictly greater than 4.  Furthermore, Figure~\ref{fig:oum} (a) admits an acute angle structure.  By this we mean that it admits an assignment of acute angles such that the sum of the angles around each vertex is equal to $2\pi$ and the sum of the angles in each triangle is greater than $\pi$.  However, Corollary \ref{cor:main} below implies that neither can be realized as an acute triangulation. \end{example}

For $\delta\ge0$ we say a geodesic metric space $X$ is \emph{$\delta$-hyperbolic} if every geodesic triangle in $X$ has the property that each side is contained in the $\delta$-neighborhood of the other two sides; see~\cite{BH1999} for details. Suppose $G$ is a group generated by a finite set $S$ and $\Gamma$ is the corresponding Cayley graph. 
Put a length metric on $\Gamma$ by declaring that each edge has length one. The group $G$ is \emph{word-hyperbolic} if this metric graph $\Gamma$ is $\delta$-hyperbolic for some $\delta\ge0$.
We say $G$ is \emph{one-ended} if $\Gamma$ has the property that the complement of every finite collection of edges has exactly one infinite component; this is equivalent to saying that $G$ is infinite and does not split over a finite group~\cite{Stallings1968}. Our main theorem is as follows.

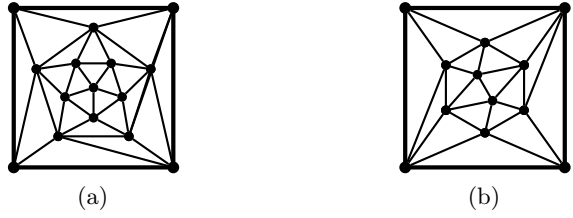
\begin{figure}[t]
\begin{center}
\subfloat[(a)]{
  \tikzstyle {a}=[red,postaction=decorate,decoration={%
    markings,%
    mark=at position .5 with {\arrow[red]{stealth};}}]
  \tikzstyle {b}=[blue,postaction=decorate,decoration={%
    markings,%
    mark=at position .43 with {\arrow[blue]{stealth};},%
    mark=at position .57 with {\arrow[blue]{stealth};}}]
  \tikzstyle {v}=[draw,circle,fill=black,inner sep=1pt]
   \tikzstyle {w}=[draw,circle,fill=white,inner sep=1pt]
\begin{tikzpicture}[thick,scale=0.5]
\node [v] at (0:0) (z) {};
\foreach \i in {0,...,4}{ % Pentagons
    	\draw (360/5*\i+90:1.6) node (pp\i) [v] {}--(360/5*\i+360/5+90:1.6) node (p\i) [v] {};
    	\draw (360/5*\i+90-36:.8)--(360/5*\i+360/5+90-36:.8) node (q\i) [v] {};
	\draw (q\i)--(z);
	\draw (pp\i)--(q\i)--(p\i);
}
\foreach \i in {0,...,3}{ % Big Square
    	\draw [ultra thick] (360/4*\i+45:3)--(360/4*\i+360/4+45:3) node (x\i) [v] {};
	\draw (x\i)--(p\i);
	\draw (x\i)--(pp\i);
}
\draw (x2)--(p3);
\draw (x3)--(p4);
\node  [] at (3,0) {};  % hidden point to match 2 figures
\end{tikzpicture}
}
$\qquad\qquad$
\subfloat[(b)]{
  \tikzstyle {a}=[red,postaction=decorate,decoration={%
    markings,%
    mark=at position .5 with {\arrow[red]{stealth};}}]
  \tikzstyle {b}=[blue,postaction=decorate,decoration={%
    markings,%
    mark=at position .43 with {\arrow[blue]{stealth};},%
    mark=at position .57 with {\arrow[blue]{stealth};}}]
  \tikzstyle {v}=[draw,circle,fill=black,inner sep=1pt]
   \tikzstyle {w}=[draw,circle,fill=white,inner sep=1pt]
\begin{tikzpicture}[thick,scale=0.5]
\draw (120:.4) node (q0) [v] {}--(300:.4) node (q1) [v] {};
\foreach \i in {0,1,4,5}{ % Pentagons
    	\draw (360/6*\i+360/6+90:1.2) node (p\i) [v] {}--(q0);
}
\foreach \i in {1,2,3,4}{ % Pentagons
    	\draw (360/6*\i+360/6+90:1.2) node (p\i) [v] {}--(q1);
}
\foreach \i in {0,...,5}{ % Pentagons
    	\draw (p\i)--(360/6*\i+2*360/6+90:1.2) node (pp\i) [v] {};
}
\foreach \i in {0,...,3}{ % Big Square
    	\draw [ultra thick] (360/4*\i+45:3)--(360/4*\i+360/4+45:3) node (x\i) [v] {};
}
\draw (x1)--(p1);
\draw (x3)--(p4);
\draw (x0)--(p0)--(x1)--(p2)--(x2)--(p3)--(x3)--(p5)--(x0);
\node  [] at (3,0) {};  % hidden point to match 2 figures
\end{tikzpicture}
}
\end{center}
\caption{Triangulations of squares.}\label{fig:oum}
\end{figure}

%Use CAT(-1) to be consistent with the beginning of the paper
\begin{theorem} \label{th:main racg} 
An abstract triangulation $L$ of $\mathbb{S}^2$ is acute if and only if $C(L)$ is one-ended and word-hyperbolic.
\end{theorem}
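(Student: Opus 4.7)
My plan is to factor the equivalence through the combinatorial condition that $L$ is \emph{flag no-square}: every $3$-cycle in $L^{(1)}$ bounds a $2$-simplex, and $L^{(1)}$ contains no induced $4$-cycle. The goal is to establish
\[
L \text{ is acute} \iff L \text{ is flag no-square} \iff C(L) \text{ is one-ended and word-hyperbolic}.
\]

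The right-hand equivalence is essentially standard. An induced $4$-cycle $abcd$ in $L$ gives commuting infinite dihedral subgroups $\form{a,c}$ and $\form{b,d}$ in $C(L)$ and hence a $\Z^2$ obstructing word-hyperbolicity; Moussong's criterion gives the converse. An empty $3$-cycle $\{a,b,c\}$ gives a splitting of $C(L)$ over the finite subgroup $\form{a,b,c}\cong(\Z/2)^3$ and hence multiple ends, whereas for flag no-square $L$ the Davis complex is a contractible $3$-manifold on which $C(L)$ acts cocompactly, yielding one end.

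For the direction flag no-square $\Rightarrow$ acute, I would invoke Andreev's theorem to produce a compact right-angled hyperbolic polyhedron $P\subset\HHH$ dual to $L$: faces, edges, and vertices of $P$ correspond to vertices, edges, and $2$-simplices of $L$. Each face $F_v$ spans a hyperbolic plane $\Pi_v$ whose ideal boundary circle $C_v\subset\SS$ satisfies $C_u\perp C_v$ whenever $\{u,v\}\in L^{(1)}$, and is disjoint from $C_u$ otherwise. Placing an interior point of $P$ at the center of the ball model makes the exterior disk $D_v$ of $C_v$ a spherical cap of radius $r_v<\pi/2$; let $p_v$ denote its spherical center. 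The orthogonality becomes $\cos d(p_u,p_v)=\cos r_u\cos r_v$, and the spherical law of cosines in the geodesic triangle $\{p_u,p_v,p_w\}$ gives
\[
\cos\theta_u = \frac{\cos d(p_v,p_w)-\cos d(p_u,p_v)\cos d(p_u,p_w)}{\sin d(p_u,p_v)\sin d(p_u,p_w)} = \frac{\cos r_v\cos r_w\sin^2 r_u}{\sin d(p_u,p_v)\sin d(p_u,p_w)}>0,
\]
so every angle $\theta_u<\pi/2$. I would separately check that these geodesic arcs assemble into an embedded triangulation of $\SS$ combinatorially isomorphic to $L$.

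For the converse, acute $\Rightarrow$ flag no-square, I would use spherical Gauss--Bonnet and the angle constraint. If $\{a,b,c\}$ is an empty $3$-cycle of an acute realization $T$, the three edges bound a spherical disk $D$ containing $V_{\mathrm{int}}\geq 1$ interior vertices, and Euler's formula forces $F=2V_{\mathrm{int}}+1$ triangles in $D$; acuteness implies every interior vertex has degree $\geq 5$, and combining this with $\pi F+\area(D)=2\pi V_{\mathrm{int}}+(\alpha_a+\alpha_b+\alpha_c)$ and $\alpha_a+\alpha_b+\alpha_c=\pi+\area(D)$ rules out small $V_{\mathrm{int}}$; applying the analysis to the complementary disk and extending to empty $4$-cycles finishes the argument. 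I expect the main obstacle to be making this angle count conclusive for large $V_{\mathrm{int}}$; a cleaner alternative would be to invert the construction above, solving $\cos d(u,v)=\cos r_u\cos r_v$ for consistent vertex radii along $T$ to produce an orthogonal circle pattern on $\SS$, and then applying Andreev's theorem in reverse to force $L$ to be flag no-square.
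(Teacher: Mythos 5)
Your factorization through the flag no-square condition is the same skeleton the paper uses (Lemma~\ref{lem:sep} together with Corollary~\ref{cor:main}), and your direction ``flag no-square $\Rightarrow$ acute'' is essentially the paper's argument: Corollary~\ref{cor:allright} produces the all-right polyhedron, and your spherical-law-of-cosines computation with the orthogonal caps $D_v$ is exactly the content of the visual sphere decomposition (Lemma~\ref{lem:visual}; the identical computation appears in Lemma~\ref{lem:visual2}). The group-theoretic equivalence is also fine. The problem is the converse.

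For ``acute $\Rightarrow$ flag no-square,'' your Gauss--Bonnet count does work for an empty $3$-cycle (this is the paper's Lemma~\ref{lem:empty}: the triangle has sides $<\pi/2$ by Corollary~\ref{cor:acute}, hence area $<\pi/2$, while the three acute apex angles force $\pi+\area(X)>3\pi/2$), but the step ``extending to empty $4$-cycles finishes the argument'' is a genuine gap, and it is precisely the hard half of the theorem. For a separating $4$-cycle $C$ the same count gives: the angles of the enclosed disk $X$ at the four vertices sum to $2\pi+\area(X)$, and the four adjacent acute triangles only force this sum to exceed $\sum_i(\pi-\alpha_i)>2\pi$ --- no contradiction, because a spherical quadrilateral with all sides $<\pi/2$ can have area arbitrarily close to $2\pi$ (four points near a great circle). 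This is why the Euclidean version of the $4$-cycle obstruction is ``a straightforward observation'' (Theorem~\ref{thm:maehara}) while the spherical one is not; the examples of Figure~\ref{fig:oum} are designed to show that no local angle/degree count detects the separating square. Your fallback --- solving $\cos d(u,v)=\cos r_u\cos r_v$ for radii along the given acute realization $T$ --- also fails as stated: that system is overdetermined ($3V-6$ equations in $V$ unknowns) and is solvable only for the special \emph{geometric} triangulations of Section~\ref{s:further}, not for an arbitrary element of $\mathcal{A}(L)$. What the paper does instead is prove acute $\Rightarrow$ word-hyperbolic directly: it takes the polar-dual Euclidean polyhedron $P_E$ with Gauss image $T$ (Lemma~\ref{lem:Gaussimage}), shrinks it to a strongly obtuse hyperbolic polyhedron $P_H$, builds the dual Davis complex over $P_H$, and verifies Gromov's link condition by showing the $(2,2,2)$-tessellation by a strongly obtuse triangle is strongly $\CAT(1)$ via the Rivin--Hodgson theorem (Lemma~\ref{lem:222}); no-square then follows from hyperbolicity. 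Some such global input (Rivin--Hodgson, or Moussong's lemma as in Remark~\ref{rem:metric flag}) is needed, and your proposal does not supply it.
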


If $L$ is a triangulation of $\mathbb{S}^2$, then $C(L)$ is the orbifold-fundamental group of a reflection orbifold $\mathcal{O}$.  If furthermore $C(L)$ is one-ended and word-hyperbolic, $\mathcal{O}$ is hyperbolic by \cite{BLP2005}; see also \cite{orbifoldbook}.  
Thus we have that an abstract triangulation $L$ of $\mathbb{S}^2$ can be realized as an acute triangulation $T$ exactly when the associated reflection orbifold of $C(L)$ can be realized as a hyperbolic orbifold $\mathcal{O}$. 
%Given an abstract triangulation which can be realized as an acute triangulation there is a whole space of such acute triangulations, and one (up to M\"obius transformations) which is geometric, see Section \ref{s:further}.  
Among the infinitely many acute triangulations $T$ realizing such an $L$, exactly one triangulation $T_0$ (up to M\"obius transformations) will be \emph{geometric}, meaning that $T_0$ corresponds to the `` dual projection'' of $\mathcal{O}$ onto $\partial\HHH$; see Section \ref{s:further} for details.

A simplicial complex $Y$ is \emph{flag} if every complete subgraph in $Y^{(1)}$ spans a simplex.
Following~\cite{JS2003}, we say $Y$ is \emph{no-square} if every 4-cycle in $Y^{(1)}$ has a chord in $Y^{(1)}$.
So an abstract triangulation $L$ of $\mathbb{S}^2$ is flag no-square if and only if there does not exist a 3- or 4-cycle $C$ such that each component of $L\setminus C$ contains a vertex.
For an abstract triangulation $L$ of $\mathbb{S}^2$, 
we have that $C(L)$ is one-ended and word-hyperbolic if and only if $L$ is flag no-square (Lemma~\ref{lem:sep}).
Therefore, we have a combinatorial answer to Question~\ref{que:main} as follows.

\begin{corollary} \label{cor:main} 
An abstract triangulation $L$ of $\mathbb{S}^2$ is acute if and only if $L$ is flag no-square.
\end{corollary}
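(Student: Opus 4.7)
The plan is to read Corollary \ref{cor:main} as the immediate composition of two biconditionals that are already in hand: Theorem \ref{th:main racg}, which equates acuteness of $L$ with $C(L)$ being one-ended and word-hyperbolic, and Lemma \ref{lem:sep} (announced in the paragraph just above the corollary), which equates the latter with $L$ being flag no-square. Chaining these two gives $L$ acute $\Longleftrightarrow$ $C(L)$ one-ended and word-hyperbolic $\Longleftrightarrow$ $L$ flag no-square. Nothing further is required for the corollary itself.

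Since Lemma \ref{lem:sep} is where all the real content lies, it is worth saying what I would do to prove it. Because the presentation of $C(L)$ sees only the $1$-skeleton $\Gamma := L^{(1)}$, I would appeal to two standard characterizations for right-angled Coxeter groups: first, $C(\Gamma)$ is word-hyperbolic if and only if $\Gamma$ contains no induced $4$-cycle (Moussong's criterion, equivalently the absence of flat planes in the Davis complex); and second, $C(\Gamma)$ is one-ended if and only if $\Gamma$ is connected and admits no clique separator.

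The remaining step is to translate both combinatorial conditions on $\Gamma$ into the topological conditions on $L \subset \SS$ stated in the paper, using the Jordan curve theorem. A $4$-cycle in $\Gamma$ separates $\SS$ into two disks, and because every face of $L$ is a triangle, one of these disks is vertex-free precisely when the $4$-cycle has a chord; so ``no induced $4$-cycle in $\Gamma$'' is exactly the no-square condition. Similarly a $3$-cycle bounds a face of $L$ precisely when one of its two sides contains no vertex, while single vertices and single edges cannot separate $\Gamma$ (each vertex link is a cycle). Hence ``no clique separator'' is exactly flagness. Assembling these gives flag no-square. The only mildly subtle step in this plan is correctly identifying which cliques can actually be separating; once that is done, everything reduces to Jordan-curve bookkeeping, and the corollary falls out by direct composition with Theorem \ref{th:main racg}.
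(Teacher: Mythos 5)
Your proposal is correct and is exactly the paper's route: Corollary \ref{cor:main} is obtained by chaining Theorem \ref{th:main racg} with Lemma \ref{lem:sep}, and Lemma \ref{lem:sep} is itself deduced from the Gromov no-square criterion for word-hyperbolicity and the Davis clique-separator criterion for one-endedness (Lemma \ref{lem:hyp}), translated into flag no-square via the planarity of $L^{(1)}\subset\SS$ just as you describe. The only quibble is in your Jordan-curve bookkeeping: for a $4$-cycle, ``has a chord'' implies ``one side is vertex-free'' only once flagness is in force (so that the two triangles cut off by the chord bound faces), but since the two conditions are imposed jointly this costs nothing.
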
 
 
By combining Corollary~\ref{cor:main} with a result of Itoh (\cite{Itoh2001} and Theorem~\ref{thm:itoh}), we will have the following characterization of the number of faces; see Section~\ref{s:history} for details.

\begin{corollary} \label{cor:faces} 
There exists an acute triangulation of $\mathbb{S}^2$ with $n$ faces if and only if $n$ is even, $n\ge20$ and $n\ne22$.
\end{corollary}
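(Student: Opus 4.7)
The plan is to combine Corollary~\ref{cor:main} with Itoh's Theorem~\ref{thm:itoh}. By Corollary~\ref{cor:main}, an abstract triangulation $L$ of $\SS$ is realized by an acute triangulation if and only if $L$ is flag and no-square, so the statement reduces to the claim that a flag no-square triangulation of $\SS$ with $n$ faces exists exactly when $n$ is even, $n\geq 20$ and $n\neq 22$.

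For necessity, double counting gives $2E=3F$ in any triangulation of $\SS$, whence $F$ is automatically even, and Euler's formula then yields $V=2+F/2$. Flag and no-square together force every vertex to have degree at least $5$: a vertex $v$ of degree $3$ with neighbors $a,b,c$ has link $C_3$, producing a non-facial $3$-cycle on $\{a,b,c\}$ (the only alternative being the tetrahedron itself, which is not flag since $\{v,a,b,c\}$ is a $K_4$ that cannot span a $3$-simplex in the $2$-complex); and a vertex $v$ of degree $4$ with link $C_4 = abcd$ has, by no-square, a chord of the link, say $ac$, so that the $3$-cycle $vac$ is not among the four faces at $v$, again contradicting flag. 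Combining $5V\leq 2E=3F$ with $V=2+F/2$ yields $F\geq 20$. The delicate exclusion $F\neq 22$ is the content of Theorem~\ref{thm:itoh}: when $F=22$ the parameters $V=13$, $E=33$ force the degree sequence $(5^{12},6)$, and Itoh shows that no triangulation of $\SS$ with this degree sequence can simultaneously be flag and no-square.

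For sufficiency, the icosahedron is flag and no-square and realizes $n=20$. For each even $n\geq 24$, Theorem~\ref{thm:itoh} provides a flag no-square triangulation of $\SS$ with $n$ faces, typically produced by iterating local moves on small explicit examples that add two or four triangles at a time while preserving the flag no-square property. Corollary~\ref{cor:main} then converts each of these combinatorial triangulations into an acute realization on $\SS$.

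The main obstacle is the exclusion $n=22$: showing that no flag no-square triangulation of $\SS$ on $13$ vertices with degree sequence $(5^{12},6)$ exists requires a careful combinatorial analysis of the structure forced around the unique degree-$6$ vertex and the constraints imposed by the no-square condition on the surrounding $5$-valent vertices. This case analysis is the substantive content of Itoh's theorem and is the step that cannot be bypassed by elementary counting.
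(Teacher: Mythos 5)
Your reduction via Corollary~\ref{cor:main} is the right move, and your necessity argument is fine (indeed it is a nice elementary supplement: flag no-square forces all vertex degrees to be at least $5$, which with Euler's formula gives $F\ge 20$ directly, whereas the paper simply quotes part (1) of Theorem~\ref{thm:itoh} for the whole necessity direction, including $n\ne 22$). A small caveat there: Itoh's exclusion of $n=22$ is a statement about geodesic \emph{acute} triangulations, not about flag no-square complexes; the combinatorial version you state follows only after passing back through Corollary~\ref{cor:main}, so you should not attribute the combinatorial nonexistence directly to Itoh.

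The genuine gap is in the sufficiency direction. You write that ``for each even $n\ge 24$, Theorem~\ref{thm:itoh} provides a flag no-square triangulation of $\SS$ with $n$ faces.'' It does not: part (2) of Theorem~\ref{thm:itoh} explicitly excludes $n=28$ and $n=34$, and Itoh left the existence of acute triangulations with $28$ or $34$ faces as an open question. These two cases are precisely where the corollary requires new input beyond Itoh, and they are the reason the paper exhibits explicit flag no-square triangulations with $28$ and $34$ faces (Figure~\ref{fig:face}, produced with the CaGe software) and then invokes Corollary~\ref{cor:main} to realize them acutely. Without supplying such examples (or some other construction for $n=28,34$), your argument does not establish the ``if'' direction for those two values of $n$, so the proof is incomplete as written.
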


Although Corollary \ref{cor:main} and \ref{cor:faces} are stated in terms of elementary spherical geometry,
we find it more natural to apply the tools of geometric group theory and hyperbolic geometry for the proof as exhibited in this paper. We occasionally do use spherical geometry, for example in the proof of  Lemma~\ref{lem:empty}.
Using similar but more sophisticated techniques, we will also extend Theorem~\ref{th:main racg} to other Coxeter groups (Theorems \ref{thm:main general} and \ref{thm:main 22p}).

An abstract triangulation $L$ of a planar surface is \emph{acute} in $\mathbb{S}^2$ or in $\EE$ if there exists a simplicial homeomorphism from $L$ onto an acute geodesic triangulation of a subset of $\mathbb{S}^2$ or of $\EE$.
We have a generalization of Corollary~\ref{cor:main} to planar surfaces as follows;
exact definitions will be given in Sections~\ref{s:history} and~\ref{s:planar}.
We remark that Maehara proved a result similar to (2) when $L\approx\mathbb{D}^2$; see~\cite{Maehara2003} and Theorem~\ref{thm:maehara}.  

\begin{theorem}\label{th:planar}
Let $L$ be an abstract triangulation of a compact planar surface
such that $L$ is flag no-separating-square. Then: 
\begin{enumerate}[(1)]
\item
$L$ is acute in $\mathbb{S}^2$.
\item 
$L$ is acute in $\mathbb{E}^2$ if and only if at least one boundary component of $L$ is not a square.
\end{enumerate}
%In both of the cases, we can realize $L$ as an acute triangulation with coinciding perpendiculars.
\end{theorem}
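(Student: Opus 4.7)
The plan is to reduce both parts to the sphere case covered by Corollary~\ref{cor:main}. Form the double $L' = L \cup_{\partial L} L$ by gluing two copies of $L$ along $\partial L$: when $L$ is a disk, $L'$ is an abstract triangulation of $\SS$ admitting a canonical $\Z/2$-involution $\tau$ that exchanges the two copies and fixes $\partial L$ pointwise. For planar $L$ with more than one boundary component, $L'$ has higher genus, so a modified construction (for example, first capping off all but one boundary component with coned disks, then doubling along the remaining boundary) will produce a triangulation of $\SS$ with an analogous $\Z/2$-symmetry. The central combinatorial step is to verify that ``$L$ is flag no-separating-square'' implies that this sphere triangulation is flag no-square: any 4-cycle either lies in a single copy of $L$ (and is either a topologically separating 4-cycle of $L$, yielding a chord by hypothesis, or essentially a boundary cycle that becomes separating after doubling, again handled by the hypothesis) or it crosses $\partial L$ (and planarity of $L$ combined with its flagness produce a chord).

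For Part~(1), Corollary~\ref{cor:main} then provides an acute realization $T'$ of this sphere triangulation. By the essential uniqueness up to M\"obius transformations of the geometric realization $T_0'$ described after Theorem~\ref{th:main racg}, the combinatorial involution $\tau$ lifts to an isometric involution $\sigma$ of $\SS$. Since $\mathrm{Fix}(\sigma)$ contains the realization of $\partial L$, a codimension-one subset, $\sigma$ must be a spherical reflection across a union of great circles. Restricting $T'$ to one closed component of $\SS \setminus \mathrm{Fix}(\sigma)$ will yield the desired acute spherical triangulation of $L$.

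For Part~(2), one analyzes the complementary spherical disks $D_1, \ldots, D_k \subset \SS$ of the realization from Part~(1); here $D_i$ is the geodesic polygon bounded by the realization of the $i$-th boundary component of $L$. For the ``if'' direction, a non-square boundary component corresponds, via the dual projection of Section~\ref{s:further}, to a finite or hyperideal vertex of the right-angled hyperbolic polyhedron associated with $C(L')$; a suitable M\"obius transformation sending a point inside $D_i$ to the north pole, followed by stereographic projection with appropriate rescaling, will yield an acute straight-edge Euclidean realization of $L$. For the ``only if'' direction, if every boundary component of $L$ is a square in the sense made precise in Section~\ref{s:planar}, each $D_i$ is a degenerate right-angled spherical $4$-gon corresponding to a Euclidean-ideal vertex of the polyhedron; an angle-sum argument at the compactifying point at infinity (where four strictly acute coned angles would have to sum to $2\pi$) will then rule out any straight-edge Euclidean realization. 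The main obstacle will be the ``if'' direction of Part~(2), where passing from the spherical/hyperbolic polyhedral description to a genuine acute Euclidean triangulation requires careful use of horospherical cross-sections and the dual projection framework.
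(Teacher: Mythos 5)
Your reduction to Corollary~\ref{cor:main} via doubling has a genuine gap at its central combinatorial step, and it fails precisely in the case that the theorem is designed to address. The hypothesis ``flag no-separating-square'' permits $L$ to have square boundary components --- indeed part~(2) is entirely about that case, and the paper explicitly notes that the square-wheel is flag-no-separating-square. But a square boundary component of $L$ becomes a chordless $4$-cycle in the double $DL$ with vertices of $DL$ on both sides, so $DL$ is \emph{not} flag no-square and Corollary~\ref{cor:main} gives you nothing. The smallest instance is already fatal: the double of the square-wheel is the octahedron, whose right-angled Coxeter group contains $\Z^3$. Your fallback of ``capping off all but one boundary component with coned disks'' breaks for the same reason (a coned square is a chordless $4$-cycle with the cone point on one side and the rest of $L$ on the other), and coning a longer boundary cycle can also create new chordless separating $4$-cycles through the cone vertex (the cone point $c$ together with two non-adjacent boundary vertices and a common interior neighbor of theirs). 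The paper's actual route avoids doubling entirely: it caps boundary components of length $\ge 5$ with Maehara caps (Lemma~\ref{lem:maehara cap}) and square boundary components with square-wheels, then invokes the \emph{ideal-vertex} version of Andreev's theorem (Theorem~\ref{thm:idealallright}, from~\cite{KLV1997}) so that each square boundary component corresponds to an ideal vertex of an all-right polyhedron; by Lemma~\ref{lem:visual2} the right angles of the resulting geometric nerve occur only at those ideal vertices, which are not vertices of $L$. Your doubling-plus-Mostow argument is essentially the paper's Proposition~\ref{p:hemi}, but there it is applied only to a disk whose double is \emph{assumed} flag no-square; it cannot be the engine for Theorem~\ref{th:planar}(1) in general.

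For part~(2), your ``only if'' direction is essentially the paper's (an elementary Euclidean angle count at a quadrilateral outer boundary), but your ``if'' direction as sketched does not go through: stereographic projection of the spherical vertices followed by ``appropriate rescaling'' does not carry an acute geodesic triangulation of $\SS$ to an acute rectilinear triangulation of $\EE$. The paper instead transports the \emph{orthogonal disk pattern}: one projects from a point inside a disk $D_p$ disjoint from all disks indexed by $L^{(0)}$ (disjointness is~\cite[Corollary 9.4]{KLV1997}, and the existence of such a $D_p$ is exactly where the non-square boundary component, hence a glued Maehara cap, is used), obtains pairwise orthogonal circles in $\EE$, and takes their \emph{Euclidean centers} as the new vertices; acuteness then follows from the elementary fact that the centers of three pairwise orthogonal circles form an acute triangle. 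You would need to supply this circle-pattern mechanism (or an equivalent) to make your ``if'' direction work.
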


In particular, the triangulations of $\mathbb{D}^2$ given in Figure~\ref{fig:oum} are acute in $\mathbb{S}^2$. However, these triangulations are not acute in $\EE$ by part (2) of the theorem or by~\cite{Maehara2003}.
We will lastly address the acute triangulability for higher dimensional spheres, using the main theorem of~\cite{Kalai1990}.
Acute triangulations of higher-dimensional spaces mean triangulations into simplices with acute dihedral angles.

\begin{proposition}\label{prop:higher}
The $d$--dimensional sphere admits an acute spherical triangulation if and only if $d\le3$.
\end{proposition}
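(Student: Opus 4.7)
I would split the proposition into existence for $d\le 3$ and nonexistence for $d\ge 4$.

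\textbf{Existence for $d\le 3$.} For $d=1$, any subdivision of $\mathbb{S}^1$ into at least three arcs is vacuously acute, since $1$-simplices have no dihedral angles. For $d=2$, the regular icosahedral triangulation of $\SS$, whose faces are equilateral spherical triangles of dihedral angle $2\pi/5$, is acute; this is the example already noted in the introduction. For $d=3$, I would take the regular $600$-cell, viewed as a tessellation of $\SSS$ by $600$ regular spherical tetrahedra. Exactly five tetrahedra meet around every edge, so each dihedral angle equals $2\pi/5<\pi/2$, giving an acute spherical triangulation of $\SSS$.

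\textbf{Nonexistence for $d\ge 4$.} The geometric ingredient is a link--angle lemma: in any acute spherical triangulation $T$ of $\mathbb{S}^d$, every codimension-two simplex $\tau$ of $T$ is contained in at least five top-dimensional simplices. Indeed, since $T$ triangulates a manifold the link of $\tau$ is a triangulated $\mathbb{S}^1$, and the dihedral angles of the top-simplices at $\tau$ sum to $2\pi$. Each of these angles is strictly less than $\pi/2$ by hypothesis, so there must be more than four of them.

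To conclude I would invoke the main theorem of~\cite{Kalai1990}, whose consequence is that in every simplicial $d$-sphere with $d\ge 4$ there exists a codimension-two face contained in at most four top-dimensional simplices. Combined with the link--angle lemma this yields a contradiction, ruling out acute triangulations of $\mathbb{S}^d$ for $d\ge 4$.

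\textbf{Main obstacle.} The geometric lemma is immediate, just the statement that dihedral angles around a codimension-two stratum of a Riemannian $d$-manifold sum to $2\pi$. The real content is the combinatorial face-degree bound. For $d=4$ it can be extracted directly from the Dehn--Sommerville relations, which parametrize the $f$-vector of a simplicial $4$-sphere by $(f_0,f_1)$ and force $f_{d-2}>\binom{d+1}{2}f_d/5$ unconditionally; for $d\ge 5$ the argument genuinely requires the stronger inequalities of~\cite{Kalai1990}. I anticipate no further obstacles beyond stating Kalai's theorem in the precise form needed.
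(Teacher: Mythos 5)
Your proposal follows essentially the same route as the paper: combine the observation that the dihedral angles around a codimension-two face of a spherical triangulation sum to $2\pi$ (hence an acute triangulation needs at least five top simplices around each such face) with Kalai's theorem to produce a codimension-two face lying in at most four facets, and exhibit the $600$-cell for $d=3$. Your extra remark that the $d=4$ case already follows from the Dehn--Sommerville relations is correct and a nice addition (for a simplicial $4$-sphere one computes $f_2-2f_4=2f_0-4>0$, so the average number of facets per triangle is below five).

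One point needs repair. You invoke \cite{Kalai1990} as a statement about \emph{every simplicial $d$-sphere}, but Kalai's theorem is a statement about \emph{polytopes} of dimension at least $5$ (every such polytope has a $2$-face that is a triangle or a quadrilateral); it is not available off the shelf for arbitrary simplicial spheres. The paper bridges this gap explicitly: since the triangulation is by geodesic simplices of $\mathbb{S}^d$, the cone over each simplex is convex, so the convex hull of the vertex set is a $(d+1)$-dimensional Euclidean polytope whose boundary complex is isomorphic to $L$; Kalai's theorem is then applied to (the dual of) this polytope and translated back through the face lattice into the statement about codimension-two faces. You should insert this convex-hull step (or restrict your appeal to the polytopal case), after which your argument matches the paper's.
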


We hope that combinatorists will find Corollary \ref{cor:main} useful in their work.  
Sleator, Tarjan and Thurston utilized hyperbolic geometry in order to prove that every pair of abstract triangulations of a disk with $n$ vertices can be connected by at most $O(n)$ \emph{diagonal flips}, namely changing the diagonal of a square formed by two faces~\cite{STT1988}.
Komuro proved a similar result for $\mathbb{S}^2$~\cite{Komuro1997}. 
From Corollary~\ref{cor:main}, we observe that the property of being realizable as an acute triangulation can be destroyed by one diagonal flip.
For example, the triangulation of $\mathbb{S}^2$ given by two copies of Figure~\ref{fig:oum} can be flipped by changing one of the four edges on the square. The resulting triangulation is flag no-square, and hence acute in $\mathbb{S}^2$.
We also hope that topologists will have interest in our proof of $\CAT(1)$-ness of certain spherical complexes homeomorphic to $\mathbb{S}^2$ (Section~\ref{s:dual}).

Section~\ref{s:history} will give some background on acute triangulations and the proof of Corollary~\ref{cor:faces}. Section~\ref{s:prelim} will review relevant background on $\CAT(\kappa)$ spaces and hyperbolic polyhedra.
Section~\ref{s:key} contains necessary results regarding reflection cubes and dual Davis complexes.
We prove Theorem~\ref{th:main racg} in Section~\ref{s:main}.  One direction follows from the fact that $C(L)$ is a right-angled hyperbolic orbifold group. For the other direction we consider strongly $\CAT(1)$ structures on $\mathbb{S}^2$ to prove that $C(L)$ is %$\CAT(-1)$. 
word-hyperbolic.
Sections~\ref{s:coxeter} through~\ref{s:subordinate} generalize Theorem~\ref{th:main racg} to other angle bounds. For this purpose we will develop a more refined argument for proving certain 2-complexes are $\CAT(1)$ in Section~\ref{s:dual}.
%These structures come from hyperbolic polyhedra~\cite{Hodgsonfirstauthor} and are described in Section~\ref{s:prelim}.
Section \ref{s:planar} discusses acute triangulations of planar surfaces.  
In Section \ref{s:further}, we ask further questions about spaces of acute triangulations
and address the situation for higher-dimensional spheres. 
%\comment{revise this summary after revision}

\section{Acute triangulations} \label{s:history}
The three main themes surrounding the topic of acute triangulations are the following: triangulability of spaces, number of simplices and triangulability with fixed combinatorics.
 
\iffalse
By an \emph{abstract triangulation of a manifold $M$}, we mean a simplicial complex homeomorphic to $M$.  If $M$ is equipped with a Riemmanian metric, a \emph{geodesic triangulation} of $M$ is a triangulation of $M$ into totally geodesic simplices.  By an \emph{acute triangulation} of $M$, we mean a geodesic triangulation such that each simplex has acute dihedral angles.   An abstract triangulation $L$ is \emph{realized by an acute triangulation in $M$}  (in short, \emph{acute} in $M$) if there is an acute triangulation $T$ of a subset of $M$ and a simplicial homeomorphism from $T$ to $L$.  
\fi

The question of acute triangulability of $\mathbb{E}^3$ can be traced back to Aristotle, as pointed out in~\cite{BKKS2009}. The (false) claim of Aristotle that the regular tetrahedra tessellate $\mathbb{E}^3$ was not refuted until the Middle Ages. 
An acute triangulation of $\mathbb{E}^3$ was first constructed in~\cite{ESU2004},
and that of 3--dimensional cubes was discovered later~\cite{KPP2012,VHZG2010}.
An acute triangulation of $\mathbb{E}^n$ does not exist for $n\ge5$~\cite{Kalai1990,KPP2012}.
There is an acute triangulation of an $n$-cube exactly when $n \leq3$~\cite{KPP2012}.
At the time of this writing,
it is not known whether $\mathbb{E}^4$ has an acute triangulation~\cite{KPP2012}.
Colin de Verdi\'ere and Marin proved that every closed surface equipped with a Riemannian metric admits an \emph{almost equilateral geodesic triangulation}, which in particular implies that each triangle is acute~\cite{MV1990}.
The acute triangulability question is related to topics in numerical analysis such as
piecewise polynomial approximation theory and finite element method for PDE; see~\cite{BKKS2009} for further reference.

Regarding the number of simplices, Gardner first asked how many acute triangles are needed to triangulate an obtuse triangle~\cite{Gardner1995}. This was answered to be seven~\cite{GM1960}. More generally, an arbitrary $n$-gon in $\mathbb{E}^2$ can be triangulated into $O(n)$ acute triangles~\cite{Maehara2002}. The number of simplices used for an acute triangulation of the unit cube $[0,1]^3$ was $2715$ in the construction of~\cite{KPP2012} and $1370$ in~\cite{VHGR2009}.
A geodesic triangle contained in one hemisphere of $\mathbb{S}^2$ can be triangulated into at most ten acute triangles and this bound is sharp~\cite{IZ2002}.
For triangulations of $\mathbb{S}^2$, Itoh proved the following theorem mainly by explicit constructions.

\begin{theorem}[\cite{Itoh2001}]\label{thm:itoh}
\begin{enumerate}[(1)]
\item
If there exists an acute triangulation of $\mathbb{S}^2$ with $n$ faces, 
then $n$ is even, $n\ge20$ and $n\ne 22$.
\item
If $n$ is even, $n\ge20$ and $n\ne 22,28,34$, then there exists an acute triangulation of $\mathbb{S}^2$ with $n$ faces.
\end{enumerate}
\end{theorem}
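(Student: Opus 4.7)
My plan is to invoke Corollary~\ref{cor:main} at the outset, reducing the question to a purely combinatorial one: an acute triangulation of $\SS$ with $n$ faces exists if and only if there is a flag no-square triangulation of $\SS$ with $n$ faces. This converts a geometric existence problem into a question about abstract simplicial complexes, which is far more tractable.

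For part (1), let $L$ be any flag no-square triangulation of $\SS$ with $F$ faces, $V$ vertices, and $E$ edges. Since each face is a triangle, $3F = 2E$, and Euler's formula gives $V = F/2 + 2$, so $F$ is even. A vertex of degree three would yield a separating 3-cycle (its link), and a vertex of degree four a separating 4-cycle; both contradict the flag no-square hypothesis. Hence every vertex has degree at least five, whence $2E = \sum_v \deg v \ge 5V$, which combined with $3F = 2E$ and $V = F/2 + 2$ yields $F \ge 20$. To exclude $F = 22$, note that then $V = 13$ and the degree sum is $66$, forcing the degree sequence $(6, 5^{12})$. Let $v$ be the unique degree-six vertex with cyclic link $w_1, \ldots, w_6$. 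Any chord $w_iw_j$ with $|i-j| \not\equiv \pm 1 \pmod 6$ would create a separating 3-cycle $vw_iw_j$, so the link is chord-free. Each edge $w_iw_{i+1}$ lies in a second triangle $w_iw_{i+1}y_i$ with $y_i$ outside $\mathrm{star}(v)$. A case analysis on how the $y_i$ coincide and on how the remaining six degree-five vertices attach inside the hexagonal annular region $\SS \setminus \mathrm{star}(v)$ shows that every configuration compatible with minimum degree five and sphericity produces either a separating 3-cycle or a separating 4-cycle, contradicting flag no-square.

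For part (2), I would produce an explicit flag no-square triangulation for each admissible $n$. The icosahedron handles $n = 20$. For larger values I would use local moves that raise $F$ by a controlled amount while preserving flag no-square, such as replacing an edge in a suitable position by a degree-five bipyramid-style subdivision, or performing an edge flip that inserts new vertices of degree at least five without creating short separating cycles. Iterating such moves starting from a short list of base examples (e.g.\ the icosahedron and a flag no-square triangulation with $24$ faces) gives every even $n \ge 24$ except the recalcitrant cases $n = 28, 34$, which are postponed to Corollary~\ref{cor:faces}.

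The main obstacle is the exclusion of $F = 22$ in part (1): although the degree sequence is uniquely determined, reasoning that every placement of the outer six vertices produces a separating short cycle is delicate and seems to require a genuine enumeration of planar embeddings of the outer subgraph. The explicit constructions in part (2) are the next hardest, since one must simultaneously keep every vertex at degree $\ge 5$, avoid separating 3- and 4-cycles, and maintain sphericity; however, this is substantially easier than Itoh's original geometric argument, since the combinatorial criterion from Corollary~\ref{cor:main} absolves us of computing any angles.
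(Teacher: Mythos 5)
This theorem is not proved in the paper at all: it is quoted verbatim from Itoh's paper \cite{Itoh2001}, where (as the surrounding text notes) it is established ``mainly by explicit constructions'' in spherical geometry. So your proposal is necessarily an independent argument rather than a reconstruction. Your overall strategy --- invoke Corollary~\ref{cor:main} to translate everything into the existence or non-existence of flag no-square triangulations with $n$ faces --- is legitimate and non-circular, since Corollary~\ref{cor:main} is derived from Theorem~\ref{th:main racg} and Lemma~\ref{lem:sep} without any appeal to Itoh's result. The easy portions of part (1) are handled correctly: $3F=2E$ forces $F$ even, the flag no-square hypothesis forces minimum degree five (a degree-3 vertex gives an empty or separating 3-cycle, a degree-4 vertex gives either a chordless separating 4-cycle or, if its link has a chord, an empty 3-cycle), and the counting $2E\ge 5V$ with Euler's formula gives $F\ge 20$. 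Note, though, that for these conclusions you do not even need the hard direction of Corollary~\ref{cor:main}: in an acute triangulation the angles at each vertex sum to exactly $2\pi$ and are each less than $\pi/2$, so minimum degree five is immediate.

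The two places where the theorem actually has content are exactly the two places your proposal stops at a sketch, and both are genuine gaps. For $F=22$ you correctly pin down the degree sequence $(6,5^{12})$ and the chordlessness of the hexagonal link, but the assertion that ``every configuration \ldots\ produces either a separating 3-cycle or a separating 4-cycle'' is the entire proof and is not carried out; you acknowledge this yourself, and it is a nontrivial finite enumeration (the six outer vertices carry nine edges among themselves and sixteen faces must fill the complementary disk), not a one-line observation. For part (2), ``local moves that raise $F$ by a controlled amount while preserving flag no-square'' is not a construction: no move is specified, no verification that flagness and the no-square condition survive is given, and an edge flip does not insert vertices as claimed. The claim that such moves would realize every even $n\ge 24$ while mysteriously skipping exactly $n=28$ and $n=34$ is a red flag --- those two values were precisely the cases Itoh could not construct and which this paper resolves only by exhibiting explicit examples found with the CaGe software (Figure~\ref{fig:face}). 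As it stands, the proposal establishes only the parity and $F\ge20$ statements; the exclusion of $22$ and all of part (2) remain unproved.
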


Itoh then asked whether or not there exists an acute triangulation of $\mathbb{S}^2$ with either 28 or 34 faces.
Using CaGe software~\cite{BDLPV2010}, we exhibit examples of flag no-square triangulations with 28 and 34 faces
 in Figure~\ref{fig:face}; see also~\cite{BM2005}. 
Since we will prove that a flag no-square triangulation can be realized as an acute triangulation (Corollary~\ref{cor:main}),  we have a complete characterization of the number of faces in an acute triangulation of $\mathbb{S}^2$ as given in Corollary~\ref{cor:faces}.

\begin{figure}[htb]
\begin{center}
\subfloat[(a)]{
\begin{tikzpicture}[scale=0.04]
  \tikzstyle {v}=[draw,circle,fill=black,inner sep=.7pt]
    \definecolor{marked}{rgb}{0.25,0.5,0.25}
    \node [v,fill] (16) at (59.168964,22.885309) {};
    \node [v,fill] (15) at (40.831037,22.885309) {};
    \node [v,fill] (14) at (49.989400,35.499258) {};
    \node [v,fill] (13) at (65.571338,33.951664) {};
    \node [v,fill] (12) at (78.672886,19.005725) {};
    \node [v,fill] (11) at (49.947001,11.564553) {};
    \node [v,fill] (10) at (21.305914,18.984524) {};
    \node [v,fill] (9) at (34.449863,33.930464) {};
    \node [v,fill] (8) at (43.608226,49.745601) {};
    \node [v,fill] (7) at (56.391774,49.745601) {};
    \node [v,fill] (6) at (70.701717,47.689210) {};
    \node [v,fill] (5) at (99.999999,6.688575) {};
    \node [v,fill] (4) at (0.000000,6.688575) {};
    \node [v,fill] (3) at (29.298283,47.562009) {};
    \node [v,fill] (2) at (49.925801,68.528726) {};
    \node [v,fill] (1) at (49.989400,93.311424) {};
    \draw [black] (16) to (11);
    \draw [black] (16) to (15);
    \draw [black] (16) to (14);
    \draw [black] (16) to (13);
    \draw [black] (16) to (12);
    \draw [black] (15) to (9);
    \draw [black] (15) to (14);
    \draw [black] (15) to (11);
    \draw [black] (15) to (10);
    \draw [black] (14) to (7);
    \draw [black] (14) to (13);
    \draw [black] (14) to (9);
    \draw [black] (14) to (8);
    \draw [black] (13) to (6);
    \draw [black] (13) to (12);
    \draw [black] (13) to (7);
    \draw [black] (12) to (5);
    \draw [black] (12) to (11);
    \draw [black] (12) to (6);
    \draw [black] (11) to (4);
    \draw [black] (11) to (10);
    \draw [black] (11) to (5);
    \draw [black] (10) to (3);
    \draw [black] (10) to (9);
    \draw [black] (10) to (4);
    \draw [black] (9) to (3);
    \draw [black] (9) to (8);
    \draw [black] (8) to (2);
    \draw [black] (8) to (7);
    \draw [black] (8) to (3);
    \draw [black] (7) to (2);
    \draw [black] (7) to (6);
    \draw [black] (6) to (1);
    \draw [black] (6) to (5);
    \draw [black] (6) to (2);
    \draw [black] (5) to (1);
    \draw [black] (5) to (4);
    \draw [black] (4) to (1);
    \draw [black] (4) to (3);
    \draw [black] (3) to (1);
    \draw [black] (3) to (2);
    \draw [black] (2) to (1);
\end{tikzpicture}

}
$\qquad$
\subfloat[(b)]{
\begin{tikzpicture}[scale=0.04]
   \tikzstyle {v}=[draw,circle,fill=black,inner sep=.7pt]
   \definecolor{marked}{rgb}{0.25,0.5,0.25}
    \node [v,fill] (19) at (56.198740,50.721319) {};
    \node [v,fill] (18) at (48.487936,51.481151) {};
    \node [v,fill] (17) at (47.725081,71.927222) {};
    \node [v,fill] (16) at (61.065500,58.019344) {};
    \node [v,fill] (15) at (76.840348,37.728487) {};
    \node [v,fill] (14) at (76.525070,19.133416) {};
    \node [v,fill] (13) at (65.400597,32.782136) {};
    \node [v,fill] (12) at (54.158381,36.428400) {};
    \node [v,fill] (11) at (43.832919,36.022975) {};
    \node [v,fill] (10) at (40.877466,50.237389) {};
    \node [v,fill] (9) at (35.367869,56.865634) {};
    \node [v,fill] (8) at (46.980898,94.179109) {};
    \node [v,fill] (7) at (99.999999,9.319152) {};
    \node [v,fill] (6) at (50.232054,11.885128) {};
    \node [v,fill] (5) at (49.548122,22.403506) {};
    \node [v,fill] (4) at (32.848908,31.596468) {};
    \node [v,fill] (3) at (21.051895,35.717056) {};
    \node [v,fill] (2) at (0.000000,5.820890) {};
    \node [v,fill] (1) at (22.890417,17.209705) {};
    \draw [black] (19) to (12);
    \draw [black] (19) to (18);
    \draw [black] (19) to (17);
    \draw [black] (19) to (16);
    \draw [black] (19) to (13);
    \draw [black] (18) to (10);
    \draw [black] (18) to (17);
    \draw [black] (18) to (12);
    \draw [black] (18) to (11);
    \draw [black] (17) to (8);
    \draw [black] (17) to (16);
    \draw [black] (17) to (10);
    \draw [black] (17) to (9);
    \draw [black] (16) to (8);
    \draw [black] (16) to (15);
    \draw [black] (16) to (13);
    \draw [black] (15) to (7);
    \draw [black] (15) to (14);
    \draw [black] (15) to (13);
    \draw [black] (15) to (8);
    \draw [black] (14) to (5);
    \draw [black] (14) to (13);
    \draw [black] (14) to (7);
    \draw [black] (14) to (6);
    \draw [black] (13) to (5);
    \draw [black] (13) to (12);
    \draw [black] (12) to (5);
    \draw [black] (12) to (11);
    \draw [black] (11) to (4);
    \draw [black] (11) to (10);
    \draw [black] (11) to (5);
    \draw [black] (10) to (4);
    \draw [black] (10) to (9);
    \draw [black] (9) to (3);
    \draw [black] (9) to (8);
    \draw [black] (9) to (4);
    \draw [black] (8) to (2);
    \draw [black] (8) to (7);
    \draw [black] (8) to (3);
    \draw [black] (7) to (2);
    \draw [black] (7) to (6);
    \draw [black] (6) to (1);
    \draw [black] (6) to (5);
    \draw [black] (6) to (2);
    \draw [black] (5) to (1);
    \draw [black] (5) to (4);
    \draw [black] (4) to (1);
    \draw [black] (4) to (3);
    \draw [black] (3) to (1);
    \draw [black] (3) to (2);
    \draw [black] (2) to (1);
\end{tikzpicture}
}
\end{center}
\caption{Acute spherical triangulations with 28 and 34 faces.}\label{fig:face}
\end{figure}
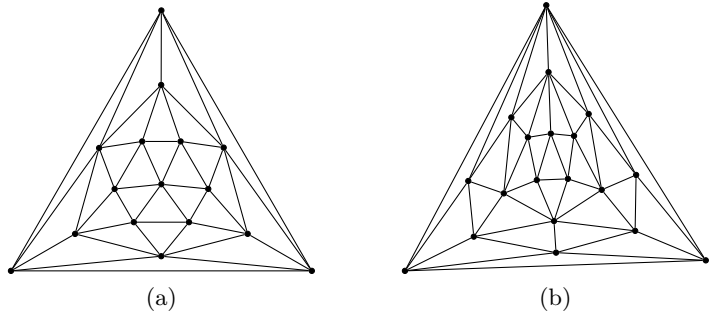

As for the theme of fixed combinatorics, Maehara determined exactly when a given abstract triangulation of a polygon can be realized as an acute triangulation in $\EE$.
Given a triangulation $L$ of a disk, a cycle $C$ in $L^{(1)}$ is said to be \emph{enclosing} if $C$ bounds
a disk with least one interior vertex.

\begin{theorem}[\cite{Maehara2003}]\label{thm:maehara}
An abstract triangulation $L$ of a disk is acute in $\mathbb{E}^2$ if and only if $L$ does not have an enclosing 3- or 4-cycle.
\end{theorem}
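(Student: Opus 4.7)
The plan is to prove the two directions separately. For \emph{necessity}, suppose $L$ admits an acute planar realization $f\co L \hookrightarrow \mathbb{E}^2$ and that, for contradiction, $L$ contains an enclosing $k$-cycle $C$ with $k \in \{3,4\}$. Let $D \subset L$ be the disk bounded by $C$ with $V_i \ge 1$ interior vertices. Each interior vertex has valence $\ge 5$, since its $2\pi$-worth of angles is partitioned into acute pieces strictly less than $\pi/2$. The boundary angles of the triangulation at the vertices of $C$ sum to the interior-angle-sum $(k-2)\pi$ of the planar $k$-gon $f(C)$. Pure Euler/angle counting (with $|F(D)| = 2V_i + k - 2$ and incidence budget $6V_i + 3(k-2)$) is consistent with the hypotheses for $k=3,4$, so the core of the argument must be geometric: by acuteness, each interior vertex $v$ lies outside the closed disk of diameter $w_iw_{i+1}$ for every consecutive pair in $\mathrm{link}(v)$ (the Thales characterization of right angles). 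This ``empty-disk'' property prevents $v$ from being pinned inside a small 3- or 4-gon $f(C)$ while still having at least 5 acute neighbor-angles filling $2\pi$; a case analysis distinguishing convex and non-convex realizations of $f(C)$ for $k=4$ yields the contradiction.

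For \emph{sufficiency}, assume $L$ has no enclosing 3- or 4-cycle. The idea is to reduce to the spherical case. After combinatorial preprocessing to avoid boundary-to-boundary chords producing multi-edges, double $L$ across $\partial L$ to obtain a triangulation $\tilde L$ of $\SS$. The no-enclosing-3- or 4-cycle hypothesis on $L$ should translate into $\tilde L$ being flag and no-square, with only the 3- and 4-cycles crossing the gluing requiring extra verification via the combinatorics of $\partial L$. By Corollary~\ref{cor:main}, $\tilde L$ admits an acute spherical realization; using the $\Z/2$-symmetry one can normalize so that $\partial L$ lies on a great circle, and one hemisphere then realizes $L$ as an acute spherical triangulation. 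To pass from spherical to Euclidean, apply a limiting argument: consider acute realizations of $\tilde L$ on spheres of increasing radius $R$ (with suitably rescaled vertex positions); as $R \to \infty$, the local geometry tends to Euclidean, and the hemispheric restriction limits to a Euclidean geodesic triangulation of $L$. By openness of the acuteness condition, the limit — and hence a sufficiently large finite-$R$ realization, viewed in its tangent plane — is acute.

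The main obstacle I expect is in sufficiency. The doubling step must be handled carefully to stay in the simplicial category, and the spherical-to-Euclidean transfer via scaling requires a genuine convergence argument because stereographic projection does not carry geodesics to geodesics. A cleaner alternative would be a direct variational construction in $\mathbb{E}^2$ — for instance via the Koebe-Andreev-Thurston circle-packing theorem with judicious boundary conditions — but proving that some circle packing yields all acute face angles requires further analytic input. The necessity direction should be more tractable once the empty-disk geometric input is isolated, since then the contradiction for $k \le 4$ becomes a finite planar-geometry check.
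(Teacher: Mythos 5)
This statement is a cited result of Maehara, not proved in the paper; the paper only sketches Maehara's argument (an orthogonal-circle construction obtained by a limit process) and, in Section~\ref{s:planar}, gives its own strengthening (Theorem~\ref{thm:euc}) via all-right hyperbolic polyhedra. Measured against either route, your proposal has genuine gaps in both directions. For necessity, you dismiss angle counting too quickly and replace it with a vague ``Thales/empty-disk'' case analysis. The clean argument \emph{is} an angle count, just not an Euler-characteristic count: each of the $k$ faces lying inside the enclosing cycle $C$ and adjacent to an edge of $C$ has an acute angle at its apex (a vertex off that edge), hence its two base angles contribute strictly more than $\pi/2$ to the interior angles of the polygon $f(C)$; summing over the $k$ edges gives $(k-2)\pi > k\pi/2$, which fails for $k=3,4$. (One must separately handle a face spanning two edges of a $4$-cycle, and note that for an \emph{enclosing} cycle these inner faces are not faces of $C$ itself.) This is exactly the Euclidean analogue of the computation in Lemma~\ref{lem:empty}.

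The sufficiency direction is where the proposal breaks. The hemisphere-rescaling limit cannot work: a geodesic triangulation of a hemisphere of radius $R$ has total angular excess equal to its area divided by $R^2$, namely $2\pi$ \emph{independently of $R$}, so these spherical triangulations do not converge to a Euclidean geodesic triangulation no matter how you rescale; indeed Theorem~\ref{th:planar}(2) shows that ``acute in $\SS$'' genuinely does not imply ``acute in $\EE$''. The doubling step is also flawed beyond multi-edge issues: a boundary vertex of degree $3$ (e.g.\ in the $5$-wheel, which has no enclosing $3$- or $4$-cycle and is acute in $\EE$) acquires degree $4$ after doubling and creates an empty square, so the double need not be flag no-square; this is why the paper extends $L$ by Maehara caps and square-wheels rather than doubling. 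Finally, the ``cleaner alternative'' you set aside is in fact the correct proof and needs no further analytic input: realize the extended complex as the nerve of an all-right hyperbolic polyhedron (Theorem~\ref{thm:idealallright}), stereographically project the resulting pattern of pairwise \emph{orthogonal} disks from a point not covered by the disks of $L$, and use the elementary fact that the centers of three pairwise orthogonally intersecting circles in $\EE$ form an acute triangle. Orthogonality of the circle packing, not a variational estimate, is what forces acuteness.
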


The forward direction of Theorem~\ref{thm:maehara} is a straightforward observation from Euclidean geometry. Maehara proved the backward direction by finding a certain nice embedding of $L$ into $\mathbb{E}^2$. Namely, he used a limit argument to construct a circle $C_v$ centered at each vertex $v\in L$ so that $C_u$ and $C_v$ are orthogonal if and only if $\{u,v\}$ is an edge of $L$. It is then elementary to see that each face of $L$ is acute in $\mathbb{E}^2$. He hinted that this construction can also be deduced from results on orthogonal circle packings on $\mathbb{S}^2$ and we will explore this approach in Section~\ref{s:planar}.

\iffalse
Regarding acute triangulations of a sphere $\mathbb{S}^n$, relatively few results are known.
It was noted in~\cite{KPP2012} that spherical analogues of results on acute triangulations of Euclidean spaces can be ``particularly insightful'' in relation to dimension reduction arguments for Euclidean triangulations.
\fi
\section{Preliminaries} \label{s:prelim}
A subset of $\mathbb{S}^2$ is called \emph{proper} if it is contained in a hemisphere.
By a \emph{spherical triangle}, we mean a triangle $R$ in $\mathbb{S}^2$ with geodesic edges such that the angles and the edges are in $(0,\pi)$. 
When there is no danger of confusion, the proper closed region bounded by $R$ is also called a \emph{spherical triangle} (or a \emph{face}, if $R$ belongs to a triangulation). 
By convention, the angles of a spherical triangle $R=ABC$ are denoted as $A,B,C$ and the lengths of their opposite edges are denoted as $a,b,c$ respectively.
The \emph{polar dual} of $R=ABC$ is defined as the spherical triangle $R'=A'B'C'$ with angles
$A' = \pi-a, B'=\pi-b,C'=\pi-c$ and with the lengths of their opposite edges $\pi - A, \pi-B$ and $\pi-C$.
Concretely, the vertices of $R'$ are suitably chosen poles of the geodesics defining $R$~\cite{Hodgsonfirstauthor}.

\subsection{Hyperbolic polyhedra}\label{s:polyhedron}
Unless specified otherwise, a polyhedron in this paper will be assumed compact, convex, 3-dimensional and \emph{simple}; that is, the valence of each vertex is three. The exception for this convention is Section~\ref{s:planar}, where non-compact, non-simple polyhedra will also be considered.

A \emph{combinatorial 2-complex} is a 2-dimensional CW complex 
where
(i) the characteristic map of each closed cell is injective and 
(ii) the boundary of each 2-face has a polygon structure such that the gluing map defines a combinatorial isomorphism.
We note that the condition (i) is often not included in the literature~\cite{BH1999}.
An \emph{abstract polyhedron} is a topological 3-ball whose boundary is equipped with the structure of a combinatorial 2-complex~\cite{Roeder}.
% $X$ homeomorphic to $\mathbb{S}^2$ such that $X^{(1)}$ is trivalent
We state Andreev's theorem~\cite{Andreev1970, Roeder} in the following form; see~\cite[Theorem 6.10.2]{Davisbook}.

\begin{theorem} (Andreev \cite{Andreev1970, Roeder})\label{thm:andreev}
Suppose $P$ is an abstract simple polyhedron, different from a tetrahedron.  Let $E$ be the edge set of $P$ and $\theta: E \rightarrow (0, \pi/2]$ be a function.  Then $(P, \theta)$ can be realized as a polyhedron in $\mathbb{H}^3$ with dihedral angles as prescribed by $\theta$ if and only if the following conditions hold: 
\begin{enumerate} [(1)]
\item At each vertex, the angles at the three edges $e_1,e_2$, and $e_3$ which meet there satisfy $\theta(e_1) + \theta(e_2) + \theta(e_3) > \pi$. 
\item If three faces intersect pairwise but do not have a common vertex, then the angles at the three edges of intersection satisfy  $\theta(e_1) + \theta(e_2) + \theta(e_3) < \pi$.
\item Four faces cannot intersect cyclically with all four angles = $\pi/2$ unless two of the opposite faces also intersect. 
\item If $P$ is a triangular prism, then the angles along the base and the top cannot all be $\pi/2$. 
\end{enumerate} 
When $(P, \theta)$ is realizable, it is unique up to isometry of $\mathbb{H}^3$. 
\end{theorem}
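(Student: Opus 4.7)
The plan is to follow the classical continuity method of Andreev, as cleaned up in Roeder's exposition. Fix the combinatorial type $P$ (different from a tetrahedron) with edge set $E$. Let $\mathcal{A}(P)\subset (0,\pi/2]^E$ be the set of angle assignments $\theta$ satisfying the four Andreev conditions, and let $\mathcal{P}(P)$ be the moduli space (modulo $\mathrm{Isom}(\mathbb{H}^3)$) of compact convex polyhedra in $\mathbb{H}^3$ whose combinatorial type is $P$ and whose dihedral angles are all non-obtuse. There is a natural map $\Phi\colon \mathcal{P}(P)\to (0,\pi/2]^E$ sending a realized polyhedron to its vector of dihedral angles. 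The goal is to show $\Phi$ is a homeomorphism onto $\mathcal{A}(P)$; uniqueness of the realization then comes for free.

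The necessity of conditions (1)--(4) is the easy half and should be dispatched first. Condition (1) comes from the fact that the link of a vertex of a compact hyperbolic polyhedron is a spherical triangle, so the sum of its interior angles exceeds $\pi$. For (2)--(4) one examines prismatic 3- and 4-circuits: each such circuit, realized in $\mathbb{H}^3$, gives a hyperbolic polygon whose angles must sum to less than the Euclidean values, and the stated strict inequalities fall out after analyzing the limiting configurations (ideal vertices, separating totally geodesic disks) that would force equality.

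For sufficiency I would argue openness, closedness, and non-emptiness, and combine them with a connectedness statement for $\mathcal{A}(P)$. \emph{Non-emptiness} is easy: every simple 3-polyhedron (other than a tetrahedron) admitting the right-angled label can be realized, and more generally one starts from a convenient point of $\mathcal{A}(P)$ and produces one explicit hyperbolic realization. \emph{Openness} is infinitesimal rigidity: one shows that the differential of $\Phi$ at a realized polyhedron has no kernel, which is a Cauchy/Aleksandrov-style argument controlling sign changes of angle perturbations around each vertex, combined with a global count. Equivalently one can invoke the Pogorelov/Andreev infinitesimal rigidity of convex hyperbolic polyhedra with assigned dihedral angles. \emph{Closedness} is the properness of $\Phi$: given a sequence $\theta_n\to\theta_\infty\in\mathcal{A}(P)$ of realized assignments, one extracts a subsequence of polyhedra $P_n$ and shows they neither collapse nor escape. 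The possible degenerations are: a face shrinks to a point, an edge length goes to $0$ or $\infty$, or the polyhedron becomes non-compact by acquiring an ideal vertex. Each degeneration must be shown to force one of the Andreev inequalities to become an equality in the limit; since $\theta_\infty\in\mathcal{A}(P)$ this cannot happen, and one obtains a limit realization of $\theta_\infty$. Finally, \emph{connectedness} of $\mathcal{A}(P)$ follows by an explicit path construction (e.g. shrink all angles continuously toward a reference point inside $\mathcal{A}(P)$, checking that each condition is a linear strict inequality preserved along the path).

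The main obstacle is the closedness/properness step, where one must rule out every possible geometric degeneration and trace the failure back to one of the four combinatorial inequalities. This requires the most delicate case analysis --- in particular handling prismatic 4-circuits (condition 3) and the special triangular-prism exception (condition 4), which is precisely where the theorem's hypotheses are tight. Openness by infinitesimal rigidity and non-emptiness are comparatively routine once the framework is in place, and the uniqueness clause then follows because $\Phi$ is a proper local homeomorphism between connected spaces of the same dimension $|E|$.
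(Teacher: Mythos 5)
This statement is not proved in the paper at all: it is imported verbatim as a known theorem, cited to Andreev and to Roeder (i.e.\ the corrected exposition of Roeder--Hubbard--Dunbar), and the paper only ever \emph{uses} it (in Proposition~\ref{p:reflectioncube}, Corollary~\ref{cor:allright}, etc.). So there is no in-paper proof to compare yours against; what you have written is a roadmap of the standard continuity-method proof from the literature, and at the level of strategy it is the right roadmap: necessity from the spherical-link and prismatic-circuit analysis, and sufficiency by showing the angle map $\Phi$ from the moduli space of realizations to the convex polytope $\mathcal{A}(P)$ of admissible angle vectors is open (infinitesimal rigidity), proper (no degenerations over the interior of the conditions), and hits a non-empty, connected, simply connected target, whence it is a homeomorphism and uniqueness follows.

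As a proof, however, the proposal has a genuine gap, and it sits exactly where you wrote ``non-emptiness is easy.'' Establishing that $\mathcal{P}(P)$ is non-empty whenever $\mathcal{A}(P)$ is non-empty --- i.e.\ producing even one hyperbolic realization of the given combinatorial type --- is the hardest step of the whole argument, and it is precisely the step where Andreev's original 1970 proof contained an error; Roeder, Hubbard and Dunbar had to supply a substantially new combinatorial induction (reducing $P$ by face/edge surgeries to base cases such as prisms and to polyhedra admitting the all-right-angle assignment) to repair it. Your parenthetical ``every simple $3$-polyhedron admitting the right-angled label can be realized'' is itself a non-trivial special case, not a starting point one gets for free. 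Secondary, smaller issues: the properness step is only named (the case analysis of degenerations --- vanishing edges, vanishing faces, vertices going to infinity --- and the matching of each degeneration to one of conditions (1)--(4) is where conditions (3) and (4) earn their keep and must actually be carried out); and your connectedness argument via ``each condition is a linear strict inequality'' needs the reformulation of conditions (3) and (4) as the strict inequality $\sum_i\theta(e_i)<2\pi$ over prismatic $4$-circuits before $\mathcal{A}(P)$ is visibly convex. None of this makes the outline wrong, but without the existence step the argument does not close.
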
 

A polygon or a polyhedron is \emph{all-right} if each dihedral angle is $\pi/2$.
For a combinatorial 2-complex $Y\approx\mathbb{S}^2$, we define the \emph{combinatorial dual} $Y^*$ as another combinatorial 2-complex homeomorphic to $\mathbb{S}^2$ obtained by placing a vertex in each of the 2-faces in $Y$ and joining two vertices of $Y^*$ if the corresponding faces in $Y$ are adjacent.
The following special case of Andreev's theorem is often useful.

\begin{corollary}[\cite{Andreev1970, Roeder}]\label{cor:allright}
Let $L$ be an abstract triangulation of $\mathbb{S}^2$.
The combinatorial dual $L^*$ can be realized as the boundary of an all-right hyperbolic polyhedron if and only if $L^{(1)}$ is flag no-square.
\end{corollary}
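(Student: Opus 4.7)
The plan is to apply Andreev's theorem (Theorem~\ref{thm:andreev}) to the abstract polyhedron $L^*$ with the constant dihedral-angle function $\theta\equiv\pi/2$. Since every face of $L$ is a triangle, every vertex of $L^*$ has valence exactly three, so $L^*$ is a simple abstract polyhedron. The combinatorial dictionary I will use throughout is: faces of $L^*$ correspond to vertices of $L$; two faces of $L^*$ share an edge if and only if the corresponding vertices of $L$ are joined by an edge of $L^{(1)}$; and three faces of $L^*$ share a common vertex if and only if the corresponding three vertices of $L$ span a 2-simplex in $L$.

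For the forward direction, assume $L^*$ is realized as an all-right hyperbolic polyhedron. Andreev's theorem then forces conditions (2) and (3) on this realization. Since $3\cdot\pi/2>\pi$, condition (2) forbids three faces of $L^*$ from pairwise intersecting without a common vertex; via the dictionary this says that every 3-cycle in $L^{(1)}$ bounds a face of $L$, so $L$ is flag. Condition (3) says that any four faces meeting cyclically at right angles must also have an opposite pair that meets; via the dictionary this says that every 4-cycle in $L^{(1)}$ has a chord, so $L$ is no-square. For the backward direction, assume $L$ is flag no-square and verify Andreev's conditions with $\theta\equiv\pi/2$. Condition (1) is automatic, and conditions (2) and (3) are precisely the flag and no-square hypotheses read through the dictionary.

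The two genuine subtleties are the hypothesis of Andreev that $L^*$ be different from a tetrahedron, and condition (4) forbidding triangular prisms with all base and top angles $\pi/2$. I would dispose of the tetrahedron case by observing that if $L^*$ were a tetrahedron then $L=K_4$; but the full vertex set of $K_4$ is a 4-clique that spans no 3-simplex, so $K_4$ is not flag. For condition (4), I would note that the three quadrilateral faces of a triangular prism are pairwise adjacent but share no common vertex (each vertex of the prism meets exactly two of them), so if $L^*$ were a triangular prism the dictionary would again produce an empty 3-cycle in $L$, contradicting flagness. This combinatorial bookkeeping around the excluded cases of Andreev is the only step that goes beyond the duality translation; once it is in place, the corollary follows immediately.
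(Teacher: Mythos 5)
The paper gives no proof of this corollary---it is stated with citations to Andreev and Roeder---so your derivation from Theorem~\ref{thm:andreev} with $\theta\equiv\pi/2$ is exactly the argument the paper is alluding to, and your duality dictionary, the reading of conditions (2) and (3) as flagness and no-square respectively, and the disposal of the tetrahedron and triangular-prism exceptions in the backward direction are all correct.

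The one loose end is in the forward direction. You invoke the ``only if'' half of Andreev's theorem for the realized polyhedron $L^*$, but that theorem is stated only for polyhedra different from a tetrahedron, and your tetrahedron discussion ($L^*$ a tetrahedron forces $L=K_4$, which is not flag) only serves the backward direction. As written, the forward implication is silent on the case where $L^*$ is a tetrahedron admitting an all-right realization; you need to rule that realization out directly rather than via Andreev. A one-line patch: the link of a vertex of an all-right hyperbolic polyhedron is the all-right spherical triangle, whose edge lengths are all $\pi/2$, so every face angle of the polyhedron is $\pi/2$; by Gauss--Bonnet a right-angled hyperbolic $n$-gon has area $(n-4)\pi/2>0$, hence every face has at least five sides, and a tetrahedron (whose faces are triangles) admits no all-right realization. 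This observation incidentally also excludes the triangular prism without appeal to condition (4). With that sentence added your proof is complete.
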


%A 3-dimensional polyhedron is \emph{simple} if each vertex has valence 3. 
The \emph{Gauss map} $G$ of an Euclidean polyhedron $P$ is a set-valued function that takes a point $x \in \partial P $ to the set of unit normals of supporting hyperplanes at $x$.  
Note that the \emph{Gauss image} $G(P)$ is combinatorially dual to $\partial P$
and isometric to $\mathbb{S}^2$.
The Gauss image of an edge $e$ of $P$ containing a vertex $v$ is a spherical arc with length the exterior dihedral angle between the two faces sharing $e$. The angles between the Gauss images of two edges $e,e'$ which meet at $v$ is the angle between two planes perpendicular to $e,e'$, respectively, oriented toward $v$. So we have the following.

\begin{observation}[\cite{Hodgsonfirstauthor}]\label{o:dual} 
For a vertex $v$ of an Euclidean polyhedron $P$, the Gauss image of $v$ 
is the polar dual of the link of $v$.   \end{observation}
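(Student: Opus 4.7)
The plan is to exhibit a direct combinatorial-metric isomorphism between the polar dual $L(v)^*$ of the link and the Gauss image $G(v)$. First I would fix notation: label the edges $e_1,\ldots,e_k$ and faces $F_1,\ldots,F_k$ of $P$ incident to $v$ cyclically so that $F_i$ contains $e_i$ and $e_{i+1}$. Write $u_i\in\SS$ for the unit tangent to $e_i$ at $v$ (pointing away from $v$) and $n_i\in\SS$ for the outward unit normal to $F_i$. Under this labeling, $\link(v)$ is a spherical polygon with vertex set $\{u_i\}$ and with the edge in face $F_i$ being a great-circle arc from $u_i$ to $u_{i+1}$, while $G(v)$ has vertex set $\{n_i\}$ and with the arc $G(e_i)$ running from $n_{i-1}$ to $n_i$.

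To identify the vertices of $L(v)^*$ with those of $G(v)$, observe that the great circle supporting the edge of $L(v)$ from $u_i$ to $u_{i+1}$ lies in the plane of $F_i$, hence has poles $\pm n_i$. The dual vertex is by definition the pole on the side opposite to the interior of $\link(v)$, and since $\link(v)$ records directions pointing into $P$, convexity of $P$ pins this down to be the outward normal $n_i$. Thus $L(v)^*$ has vertex set $\{n_1,\ldots,n_k\}$, matching $G(v)^{(0)}$. For the edges, polar duality assigns to each vertex $u_i$ of $\link(v)$ an arc in the great circle $\{x\in\SS:x\cdot u_i=0\}$ joining the dual vertices adjacent to $u_i$, namely $n_{i-1}$ and $n_i$. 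On the other hand $G(e_i)$ consists, by definition, of unit normals to supporting hyperplanes containing $e_i$, each of which is perpendicular to $e_i$ (hence to $u_i$) and which traverse the arc from $n_{i-1}$ to $n_i$. These two arcs coincide as subsets of $\SS$, so $L(v)^*=G(v)$ as spherical polygons.

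The only subtle point, and the main obstacle, is the orientation convention for the outward pole in the vertex step: one must be certain that the interior of $\link(v)$ points into $P$, so that the correctly chosen pole is $n_i$ rather than $-n_i$ (otherwise one would obtain the antipodal reflection of $G(v)$). I would cross-check this against the metric data already recorded in the excerpt: the arc $G(e_i)$ has length equal to the exterior dihedral angle $\pi-\theta(e_i)$, which is precisely the polar-dual edge length associated to the vertex angle $\theta(e_i)$ of $\link(v)$ at $u_i$; symmetrically, the angle of $G(v)$ at $n_i$ equals the angle between the great circles $\{x\perp u_i\}$ and $\{x\perp u_{i+1}\}$, which equals $\pi$ minus the face angle of $F_i$ at $v$, namely the polar-dual angle corresponding to the edge length of $\link(v)$ from $u_i$ to $u_{i+1}$. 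This metric agreement confirms the identification $G(v)=L(v)^*$.
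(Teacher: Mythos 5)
Your proposal is correct and follows essentially the same route as the paper: the paper's justification is exactly the two metric facts you verify at the end (the arc $G(e)$ has length equal to the exterior dihedral angle at $e$, and the angle of $G(v)$ between consecutive arcs is the angle between the planes perpendicular to the corresponding edges, i.e.\ $\pi$ minus the face angle), combined with the definitional fact that the polar-dual vertices are the appropriately chosen poles of the great circles supporting the link's edges. Your explicit bookkeeping with $u_i$ and $n_i$, and the care about choosing the outward rather than inward pole, simply makes the paper's one-line argument fully precise.
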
 

%Indeed, the link is a spherical triangle $T$. The angles of $T$ are the dihedral angles between faces of $P$ which contain $v$, and the side lengths are the corresponding face angles.  
% This is the complementary angle to the face angle. Thus the image of $v$ under the Gauss map is a spherical polygon which has edgelengths and angles, respectively, which are complementary to the angles and edgelengths, respectively,  of the link of $v$. 

Rivin and Hodgson define a similar map for a hyperbolic polyhedron.  Namely, given a hyperbolic polyhedron $P$, let $G(P)$ be the spherical complex consisting of spherical triangles which are the polar duals of the links of vertices of $P$.  Those polar duals are glued together by isometries of their faces.  This can also be described in terms of unit normals to supporting hyperplanes in the projective model, see \cite{Hodgsonfirstauthor}. Unlike the Gauss image of a Euclidean polyhedron, the Gauss image of a hyperbolic polyhedron is not isometric to $\mathbb{S}^2$.  

\begin{theorem}\cite[Theorem 1.1]{Hodgsonfirstauthor}\label{thm:HR}
A metric space $(M,g)$ homeomorphic to $\mathbb{S}^2$ can arise as the Gaussian image $G(P)$ of a compact  polyhedron $P$ in $\mathbb{H}^3$ if and only if the following conditions hold: 
\begin{enumerate}[(1)]
\item The metric $g$ has constant curvature 1 away from a finite collection of cone points $c_1,c_2,\ldots,c_i$. 
\item The cone angle at each $c_i$ is greater than $2 \pi$. 
\item The lengths of closed geodesics of $(M,g)$ are greater than $2 \pi$. 
\end{enumerate} 
Moreover, $(M,g)$ determines $P$ uniquely up to orientation preserving isometries.
\end{theorem}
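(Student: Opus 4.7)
The plan is to treat the two directions of the equivalence separately.

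For necessity, start from a compact convex polyhedron $P\subset\HHH$ and analyze $G(P)$ directly. Each face $F$ of $P$ has a unique outward unit normal, which becomes a vertex of $G(P)$; each edge $e$ shared by faces $F_1,F_2$ contributes a geodesic arc of length $\pi-\theta(e)$ between the corresponding vertices; and each vertex $v$ of $P$ contributes the polar dual of $\link(v)$, a spherical polygon. Since $G(P)$ is assembled isometrically from pieces of $\SS$, condition (1) holds away from the dual vertices. For condition (2), at the cone point dual to a face $F$ with interior angles $\alpha_1,\dots,\alpha_n$, the cone angle equals $\sum_i(\pi-\alpha_i)=n\pi-\sum_i\alpha_i$; since $F$ is a hyperbolic polygon, $\sum_i\alpha_i<(n-2)\pi$, so the cone angle exceeds $2\pi$. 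For condition (3), a closed geodesic in $G(P)$ avoiding the dual vertices corresponds to a cyclic sequence of faces of $P$ meeting in edges $e_1,\dots,e_k$, and its length equals $\sum_i(\pi-\theta(e_i))$; one then argues that the corresponding cycle of supporting hyperplanes cuts out a proper region of $\HHH$ whose polar polygon has perimeter exceeding $2\pi$, by a hyperbolic variant of the isoperimetric inequality.

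For sufficiency, I would use an Alexandrov-style deformation argument. The combinatorial structure $K$ induced on $\SS$ by $(M,g)$ can be read off from the cone points and the cut locus between them. Let $\mathcal{P}_K$ denote the moduli space of compact convex hyperbolic polyhedra with dual combinatorics $K$, and $\mathcal{M}_K$ the moduli space of cone metrics on $\SS$ of type $K$ satisfying (1)--(3). Both are smooth manifolds of the same dimension, equal to the number of edges of $K$, parametrized by dihedral angles on the polyhedron side and by edge lengths on the metric side. The Gauss image construction defines a continuous map $\Phi\co\mathcal{P}_K\to\mathcal{M}_K$; the goal is to show $\Phi$ is a proper local diffeomorphism, hence a covering, and then deduce from connectedness and a degree computation that $\Phi$ is a homeomorphism.

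The main obstacle lies in verifying these two ingredients. Infinitesimal rigidity, which gives the local-diffeomorphism property, is a hyperbolic analogue of Cauchy's rigidity theorem: via the Schl\"afli formula one expresses the first-order variation of the Gauss metric in terms of dihedral variations, and a Cauchy-style sign-counting argument on $\partial P$ shows that any nontrivial infinitesimal flex must alter some cone angle or some geodesic length. Properness says that a sequence in $\mathcal{P}_K$ whose Gauss images converge in $\mathcal{M}_K$ subconverges, modulo isometries of $\HHH$, to a polyhedron of the same combinatorial type; the strict inequalities in (2) and (3) are essential here, as they exclude the vertex-collapse and escape-to-infinity degenerations that would otherwise occur at $\partial\mathcal{P}_K$. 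Once $\Phi$ is known to be a homeomorphism, the uniqueness clause of the theorem follows at once.
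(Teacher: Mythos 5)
The first thing to say is that the paper contains no proof of this statement: Theorem~\ref{thm:HR} is the main theorem of Rivin and Hodgson, quoted and used as a black box, so there is nothing internal to compare your argument against. Measured against the original proof, your outline does reproduce its broad architecture --- a direct computation of the Gauss image for necessity, and an Alexandrov-style deformation argument (infinitesimal rigidity, properness, invariance of domain, degree count) for sufficiency --- and your verification of conditions (1) and (2) is correct: the cone angle at the point dual to a face $F$ is $n\pi-\sum_i\alpha_i$, which exceeds $2\pi$ precisely because $F$ is a compact hyperbolic polygon.

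There are, however, two places where the sketch as written would fail. First, condition (3) is the genuinely hard half of the necessity direction, and your treatment of it is not an argument. You consider only closed geodesics avoiding the cone points, but a cone metric all of whose cone angles exceed $2\pi$ admits locally geodesic closed curves passing \emph{through} cone points, and these must be bounded below as well. More importantly, the mechanism is not an ``isoperimetric inequality'': the Rivin--Hodgson proof identifies $G(P)$ with the induced metric on the polar dual of $P$ in the de Sitter sphere and establishes a specific comparison lemma saying that the boundary of a compact convex space-like polygon there has length greater than $2\pi$; you would have to state and prove that lemma, and nothing in your sketch points toward it. Second, in the sufficiency direction, fixing the combinatorial type $K$ is the wrong move. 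A metric satisfying (1)--(3) carries no canonical cell structure (the cut locus of the cone points need not reproduce the dual combinatorics of the realizing polyhedron), the stratum $\mathcal{M}_K$ is neither open nor obviously connected, and your dimension count (the number of edges of $K$) matches $3F-6$ only when $K$ is dual to a \emph{simple} polyhedron. The original argument works instead with the space of all marked compact convex polyhedra with $n$ faces and the space of all metrics with $n$ cone points satisfying (1)--(3), and proving that the latter is a connected $(3n-6)$-manifold is itself a substantial part of that paper. Your infinitesimal-rigidity step (Schl\"afli formula plus Cauchy-type sign counting) is the right idea for local injectivity, but the proposal as a whole is a road map, not a proof.
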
 
%Note that by \cite{BH1999} such a metric space is $\CAT(1)$; see Section~\ref{s:cat}.

\subsection{$\CAT(\kappa)$ spaces}\label{s:cat}
%We work with spaces which are geodesic metric spaces.
For $\kappa\in\mathbb{R}$ and $n=\mathbb{N}\cup\{0\}$, 
we let $M_\kappa^n$ be the simply connected Riemannian $n$--manifold of constant sectional curvature $\kappa$.
In particular, $M_{-1}^n\cong\mathbb{H}^n$, $M_{0}^n\cong\mathbb{E}^n$ and $M_{1}^n\cong\mathbb{S}^n$. We let $M_{\kappa}=\coprod_{n\ge0}M_\kappa^n$.

Suppose $X$ is a geodesic metric space and $\Delta$ is a geodesic triangle in $X$. 
For a real number $\kappa$, a \emph{comparison triangle in $M_\kappa^2$ for $\Delta$} is a geodesic triangle $\bar{\Delta}\subseteq M_\kappa^2$ with the same edge-lengths as $\Delta$.
Given a comparison triangle $\bar \Delta$ for $\Delta$ and a point $x$ on $\Delta$, there is a unique comparison point $\bar x$ on $\bar\Delta$ given by the isometries between the edges of $\Delta$ and $\bar\Delta$.  

\begin{definition}   
We say $X$ is $\CAT(\kappa)$ if for every geodesic triangle $\Delta$ in $X$, and every pair of points $x, y \in \Delta$, we have $d_X(x,y) \leq d_{M_\kappa^2}(\bar x, \bar y)$ for a comparison triangle $\bar\Delta$ in $M_\kappa^2$.
\end{definition} 

An \emph{$M_\kappa$-complex} $X$ is a space obtained by gluing cells in $M_\kappa$ by isometries of their faces. 
Bridson proved that if there are only finitely many isometry types of the cells used in $X$ then $X$ is a complete geodesic space with the length metric on $X$~\cite[Theorem 7.50]{BH1999}.
If $\kappa=-1,0$ or $1$, we call $X$ a \emph{piecewise hyperbolic, Euclidean} or \emph{spherical complex}, respectively.
The \emph{link} of a vertex $v$ in an $M_\kappa$-complex $X$ is the set of unit tangent vectors at $v$.
Regardless of the value of $\kappa$, the link is a piecewise spherical complex.
The following condition is called \emph{Gromov's link condition}.

\begin{theorem}[Gromov, \cite{Gromov1987}]\label{thm:gromov}
An $M_\kappa$-complex is locally $\CAT(\kappa)$ if and only if the link of each vertex is $\CAT(1)$.
\end{theorem}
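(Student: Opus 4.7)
The plan is to reduce the theorem to \emph{Berestovskii's cone theorem}, which states that for a complete metric space $Y$, the $\kappa$-cone $C_\kappa(Y)$ is $\CAT(\kappa)$ in a neighborhood of its apex if and only if $Y$ is $\CAT(1)$ (with the diameter condition on $Y$ being automatic for small enough cone balls).

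First I would establish the key local-cone identification: for each vertex $v$ in an $M_\kappa$-complex $X$, there exists $\varepsilon>0$ such that the closed ball $\bar B(v,\varepsilon)\subseteq X$ is isometric to the closed radius-$\varepsilon$ ball at the apex of $C_\kappa(\link(v))$. To see this, fix a closed cell $\sigma\ni v$. In $M_\kappa^{\dim\sigma}$ the intersection $\sigma\cap\bar B(v,\varepsilon)$ is parameterized by geodesic polar coordinates at $v$ as a cone sector over $\link(v,\sigma)$ with the standard $\kappa$-cone metric. Gluing these sectors along shared faces, consistently with the definition of $\link(v)$, yields a cone identification on $\bigcup_{\sigma\ni v}\bigl(\sigma\cap \bar B(v,\varepsilon)\bigr)$. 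Invoking Bridson's theorem that an $M_\kappa$-complex with finitely many isometry types of cells is a complete geodesic space in its length metric, one can choose $\varepsilon$ uniformly so that this cone identification agrees with the ambient length metric on $\bar B(v,\varepsilon)$.

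Given the identification, both directions of Gromov's link condition follow from Berestovskii. If every vertex link is $\CAT(1)$, then each $\bar B(v,\varepsilon)$ is a $\CAT(\kappa)$ ball, so $X$ is locally $\CAT(\kappa)$ at every vertex. Points interior to top-dimensional cells have neighborhoods isometric to balls in $M_\kappa^n$; points on lower-dimensional strata have links that decompose as spherical joins $\mathbb{S}^{k-1}\ast L'$, where $L'$ is the link of the minimal cell containing them, and this is again $\CAT(1)$ by the spherical join theorem combined with the vertex-link hypothesis. Conversely, if $X$ is locally $\CAT(\kappa)$ then the identification makes a neighborhood of the apex of $C_\kappa(\link(v))$ into a $\CAT(\kappa)$ space, which by Berestovskii forces $\link(v)$ to be $\CAT(1)$.

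The main obstacle is the local-cone identification itself. Verifying that the ambient length metric on $\bar B(v,\varepsilon)$ coincides with the cone metric amounts to controlling geodesics: one must show that for $\varepsilon$ sufficiently small, no rectifiable path between points of $\bar B(v,\varepsilon/2)$ can shortcut by leaving the closed star of $v$, and that within the star the piecewise-$M_\kappa$ length metric matches the cone construction. This is the technical heart of the theorem and is precisely where the finiteness-of-isometry-types assumption enters essentially, providing the uniform radius $\varepsilon$ at each vertex.
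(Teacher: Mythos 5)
The paper does not actually prove this statement: it is imported verbatim from Gromov (the standard written source being Bridson--Haefliger, Theorem II.5.2 together with its supporting results on the local structure of $M_\kappa$-complexes and Berestovskii's cone theorem), so there is no in-paper argument to compare yours against. Your proposal reproduces that standard proof --- identify a small ball about a vertex with a ball about the apex of $C_\kappa(\link(v))$, with finiteness of shapes supplying a uniform positive radius, then apply the cone theorem in both directions --- and you correctly isolate the local-cone identification as the technical heart.

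One step is genuinely underspecified. For a point $x$ in the relative interior of an $m$-cell $\sigma$ with $m\ge 1$ you write $\link(x)\cong\mathbb{S}^{m-1}\ast L'$ with $L'=\link(\sigma)$ and assert this is $\CAT(1)$ ``by the spherical join theorem combined with the vertex-link hypothesis.'' But the hypothesis only controls links of \emph{vertices}, and $L'$ is the link of a positive-dimensional cell; as written, nothing forces $L'$ to be $\CAT(1)$. The standard repair is an induction on $\dim X$: for a vertex $v$ of $\sigma$, the complex $\link(\sigma,X)$ is isometric to the link of the cell $\link(v,\sigma)$ inside the $M_1$-complex $\link(v,X)$, which is $\CAT(1)$ (hence locally $\CAT(1)$) by hypothesis and has strictly smaller dimension; the inductive case of the theorem for $\kappa=1$ then shows that every point link of $\link(v,X)$, in particular $\mathbb{S}^{m-2}\ast\link(\sigma,X)$, is $\CAT(1)$, and the join theorem peels off the sphere factor to give $\CAT(1)$-ness of $\link(\sigma,X)$ itself. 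Without this induction your ``if'' direction establishes local $\CAT(\kappa)$ only at vertices and at interior points of top-dimensional cells; with it made explicit, the proof is complete and is exactly the argument the paper is implicitly citing.
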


The following criterion for a $\CAT(1)$ structure is an analogue of Cartan--Hadamard theorem for $\CAT(0)$ spaces~\cite{BH1999}.

\begin{lemma}\label{lem:cat1sph}
A piecewise-spherical complex $L$ is $\CAT(1)$ if and only if 
\begin{enumerate}[(i)]
\item
the link of each vertex in $L$ is $\CAT(1)$, and
\item
each closed geodesic in $L$ has length at least $2\pi$.
\end{enumerate}
\end{lemma}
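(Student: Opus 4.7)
The plan is to handle the two directions separately, combining standard $\CAT(\kappa)$ machinery from Bridson--Haefliger \cite{BH1999} with the paper's Gromov link criterion (Theorem~\ref{thm:gromov}).

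For the forward implication, suppose $L$ is $\CAT(1)$. The space of directions at any point of a $\CAT(\kappa)$ space is itself $\CAT(1)$, and at a vertex of a piecewise-spherical complex this space of directions coincides (up to isometry) with the link; this yields (i). For (ii), suppose to the contrary that $\gamma$ is a closed geodesic of length $2\ell<2\pi$. Pick two points $x,y\in\gamma$ at arc-distance $\ell$ along $\gamma$: the two halves of $\gamma$ are distinct local geodesics of length $\ell<\pi$ between $x$ and $y$. But in a $\CAT(1)$ space any two points at distance $<\pi$ are joined by a unique geodesic, and local geodesics of length $<\pi$ are honest geodesics, giving a contradiction.

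For the backward implication, assume (i) and (ii). Applying Gromov's link condition (Theorem~\ref{thm:gromov}) with $\kappa=1$, condition (i) immediately implies that $L$ is locally $\CAT(1)$. The next step is to upgrade this to a global $\CAT(1)$ statement. The tool for this is a theorem of Bowditch (see \cite{BH1999}, Theorems II.4.13 and II.5.4): a complete geodesic space which is locally $\CAT(1)$ is globally $\CAT(1)$ if and only if it contains no closed geodesic of length strictly less than $2\pi$. Since $L$ is built from finitely many isometry types of cells it is a complete length space (as noted in Section~\ref{s:cat}), hypothesis (ii) supplies the required closed-geodesic condition, and the theorem yields the $\CAT(1)$ conclusion.

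The main obstacle is the promotion from local to global. In the analogous $\CAT(0)$ Cartan--Hadamard theorem, simple connectivity alone suffices, but for $\CAT(1)$ the model space $\SS$ itself has closed geodesics of length exactly $2\pi$, so one must rule out anything shorter by hand. Bowditch handles this by developing triangles of perimeter $<2\pi$ into $\SS$ and showing that the no-short-geodesic hypothesis prevents the developing map from folding back on itself in a way that would violate the $\CAT(1)$ comparison on balls of radius $<\pi/2$. For the purposes of our later applications it suffices to invoke this result as a black box.
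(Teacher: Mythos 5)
Your proof is correct and is essentially the argument the paper itself relies on: the paper states this lemma without proof, simply citing \cite{BH1999}, and your two directions (links as $\CAT(1)$ spaces of directions plus uniqueness of geodesics shorter than $\pi$ for the forward implication; Gromov's link condition plus the Bowditch local-to-global theorem for the backward implication) are the standard route through that reference. The only caveat is to verify the exact theorem numbers you quote from \cite{BH1999} for the $\CAT(1)$ local-to-global statement, but this is a citation detail rather than a mathematical gap.
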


For two--spheres, we will utilize a stronger condition below than just being $\CAT(1)$.

\begin{definition}\label{defn:strongly cat1}
Let $L$ be a piecewise spherical complex homeomorphic to $\mathbb{S}^2$.
We say that $L$ is \emph{strongly $\CAT(1)$} if the cone angle at each vertex is greater than $2\pi$ and each closed geodesic in $L$ is longer than $2\pi$.
\end{definition}

Strongly $\CAT(1)$ complexes are $\CAT(1)$ by Lemma \ref{lem:cat1sph}.  Since we will construct $\CAT(-1)$ complexes in Section~\ref{s:main} and~\ref{s:subordinate} and conclude that the corresponding Coxeter groups are word-hyperbolic, we record the following: 

\begin{lemma}[{\cite[Theorem 12.5.4]{Davisbook}}]\label{lem:cathyp}
If a group $G$ acts geometrically (that is, properly and cocompactly by isometries) on a $\CAT(-1)$ space then $G$ is finitely generated and word-hyperbolic.
\end{lemma}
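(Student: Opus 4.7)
The plan is to combine the \v{S}varc--Milnor lemma with the fact that Gromov hyperbolicity of geodesic metric spaces is a quasi-isometry invariant.

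Since $X$ is $\CAT(-1)$, hence in particular a geodesic metric space, and $G$ acts properly and cocompactly by isometries, the \v{S}varc--Milnor lemma (\cite[Prop.~I.8.19]{BH1999}) applies and shows that $G$ is finitely generated; moreover, for any basepoint $x_0\in X$ the orbit map $g\mapsto g\cdot x_0$ is a quasi-isometry from $G$, equipped with the word metric relative to any finite generating set, to $X$.

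Next I would verify that $X$ is Gromov hyperbolic. Given a geodesic triangle $\Delta\subseteq X$, take a comparison triangle $\bar\Delta\subseteq\HH$. The $\CAT(-1)$ inequality bounds the distances between points of $\Delta$ by the corresponding distances in $\bar\Delta$. Since every geodesic triangle in $\HH$ is $\delta_0$-thin for a universal constant $\delta_0$ (one may take $\delta_0=\log(1+\sqrt{2})$), the triangle $\Delta$ is $\delta_0$-thin as well, and hence $X$ is $\delta_0$-hyperbolic as a metric space.

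Because Gromov hyperbolicity of geodesic metric spaces is preserved under quasi-isometry, the Cayley graph of $G$ is hyperbolic, i.e., $G$ is word-hyperbolic. The only step with any genuine content is the comparison argument showing $\CAT(-1)\Rightarrow$ Gromov hyperbolic; both the \v{S}varc--Milnor lemma and the quasi-isometry invariance of hyperbolicity are standard staples of coarse geometry and can be cited directly from \cite{BH1999}.
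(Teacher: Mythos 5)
Your proposal is correct: the paper offers no proof of this lemma, citing it directly from \cite[Theorem 12.5.4]{Davisbook}, and your argument --- \v{S}varc--Milnor for finite generation and the quasi-isometry $G\to X$, the comparison-triangle argument showing every $\CAT(-1)$ space is $\delta_0$-hyperbolic with the constant inherited from $\HH$, and quasi-isometry invariance of hyperbolicity for geodesic spaces --- is exactly the standard proof underlying that citation. No gaps.
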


\subsection{Right-angled Coxeter groups} \label{ss:Coxeter} 
Right-angled Coxeter groups on no-square graphs are one of the earliest examples of word-hyperbolic groups given by Gromov~\cite{Gromov1987}.

\begin{lemma}\label{lem:hyp}
Let $Y$ be a simplicial complex.
\begin{enumerate}[(1)]
\item (\cite[Lemma 8.7.2]{Davisbook})
$C(Y)$ is one-ended if and only if $Y^{(1)}$ is not a complete graph and there does not exist a complete subgraph $K$ of $Y^{(1)}$ 
such that $L\setminus K$ is disconnected.
\item (\cite[p. 123]{Gromov1987}) 
$C(Y)$ is word-hyperbolic if and only if $Y$ is no-square.
\end{enumerate}
\end{lemma}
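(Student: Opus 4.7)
Part (1) combines Stallings' theorem on ends of groups with the visual decomposition machinery for right-angled Coxeter groups. Since $C(Y)$ depends only on $Y^{(1)}$, I first note that $C(Y)$ is finite iff $Y^{(1)}$ is a complete graph $K_n$, in which case $C(Y)\cong(\ZZ)^n$; any non-edge of $Y^{(1)}$ yields an infinite dihedral subgroup. Assuming $Y^{(1)}$ is not complete, Stallings' theorem reduces one-endedness to the absence of any nontrivial splitting of $C(Y)$ over a finite subgroup. If $K\subseteq Y^{(1)}$ is a separating clique and $Y^{(1)}\setminus K=A\sqcup B$ is a partition into nonempty vertex sets with no edges across, then $C(K)\cong(\ZZ)^{|K|}$ is finite and
\[ C(Y)\cong C(K\cup A)*_{C(K)}C(K\cup B) \]
is a nontrivial splitting, so $C(Y)$ is not one-ended. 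For the converse I would invoke the Davis complex $\Sigma(Y)$, a $\CAT(0)$ cube complex on which $C(Y)$ acts geometrically: every finite subgroup of $C(Y)$ fixes a cube of $\Sigma(Y)$ and is therefore conjugate to a standard parabolic $C(K)$ for some clique $K\subseteq Y^{(1)}$, and the visual decomposition theorem then shows that any splitting over such a subgroup forces $K$ to separate $Y^{(1)}$.

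\textbf{Plan for (2).} The group $C(Y)$ acts geometrically on $\Sigma(Y)$, which is $\CAT(0)$ (after implicitly passing to the flag completion of $Y$, an operation that does not change $C(Y)$). By Gromov's flat plane theorem, $C(Y)$ is word-hyperbolic iff $\Sigma(Y)$ contains no isometrically embedded Euclidean plane. For the easy direction, an empty $4$-cycle $\{a,b,c,d\}$ in $Y^{(1)}$ produces the subgroup $\langle a,c\rangle\times\langle b,d\rangle\cong D_\infty\times D_\infty$, which stabilizes an embedded flat in $\Sigma(Y)$. For the converse, an isometrically embedded flat in $\Sigma(Y)$ descends to a closed geodesic of length exactly $2\pi$ in the piecewise-spherical link of some vertex; in the right-angled setting, a direct analysis of links of all-right spherical simplices forces such a geodesic to traverse exactly four vertices forming an empty $4$-cycle in $Y^{(1)}$.

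\textbf{Main obstacle.} The principal difficulty lies in the converses of both parts: passing from an abstract object (a splitting over a finite subgroup, or an isometric flat) to a combinatorial feature of $Y^{(1)}$ (a separating clique, or an empty $4$-cycle). For (1) this requires the nontrivial visual-decomposition result implicit in Davis's book, and for (2) it rests on the standard identification of flats in $\CAT(0)$ cube complexes with induced $4$-cycles in the defining graph. I would invoke both as black boxes—they are precisely the cited references—rather than redevelop them here.
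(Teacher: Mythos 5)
Your proposal matches the paper's treatment: the paper likewise proves only the easy directions explicitly (a separating clique $K$ gives a splitting of $C(Y)$ over the finite group $C(K)$, and an empty square gives a $\mathbb{Z}^2$ subgroup obstructing hyperbolicity) and defers the converses to the cited references, exactly as you do by invoking the Davis-complex and flat-plane machinery as black boxes. Your sketch is correct and adds only slightly more detail about what those black boxes contain.
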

If a complete subgraph $K$ of $Y^{(1)}$ separates $Y$
then $C(Y)$ splits as a free product over a finite group $C(K)$ and hence, has more than one end.
This implies the forward direction of (1).
Since the right-angled Coxeter group over a square contains $\Z^2$
the forward direction of (2) is obvious as well.
The backward directions of the above lemma are not immediate and we refer the readers to the literature.

In the case when $Y$ is an abstract triangulation of $\mathbb{S}^2$, the condition in the part (1) of the above lemma is satisfied if and only if $Y$ is flag. Hence Lemma~\ref{lem:hyp} implies the following.

\begin{lemma}\label{lem:sep}
Let $L$ be an abstract triangulation of $\mathbb{S}^2$.
Then $C(L)$ is one-ended and word-hyperbolic if and only if 
$L$ is flag no-square.
\end{lemma}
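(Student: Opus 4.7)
The plan is to derive this lemma by combining the two parts of Lemma~\ref{lem:hyp}. The word-hyperbolic portion is immediate: by Lemma~\ref{lem:hyp}(2) the group $C(L)$ is word-hyperbolic exactly when $L$ is no-square. So I only need to show that for a triangulation $L$ of $\SS$, the flag condition is equivalent to the two conditions from Lemma~\ref{lem:hyp}(1), namely (i) $L^{(1)}$ is not a complete graph, and (ii) no complete subgraph of $L^{(1)}$ separates $L$.

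For the forward direction, assume $L$ is flag. Since $L$ is 2-dimensional, flagness forces $L^{(1)}$ to contain no $K_4$, for otherwise four pairwise-adjacent vertices would span a 3-simplex. A brief Euler-characteristic check shows that the only triangulation of $\SS$ whose 1-skeleton is complete is the tetrahedron, which is not flag, so (i) holds. For (ii), any complete subgraph of $L^{(1)}$ is thus a vertex, an edge, or a triangle. A vertex or edge clearly fails to separate a triangulation of $\SS$. A $K_3$ on vertices $\{a,b,c\}$ spans a 2-simplex $\sigma$ of $L$ by flagness; the boundary $\partial\sigma$ bounds $\sigma$ on one side (containing no other vertices) and a connected triangulated disk on the other (containing all remaining vertices), so removing $\{a,b,c\}$ leaves the rest of $L^{(1)}$ connected.

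For the reverse direction, suppose $L$ is not flag. Then some three vertices $a,b,c$ pairwise span edges of $L$ but do not bound a 2-simplex. The resulting 3-cycle is a simple closed curve in $\SS$, and by the Jordan curve theorem it separates $\SS$ into two disks $D_1,D_2$. Each $D_i$ inherits a triangulation from $L$; if either $D_i$ contained no interior vertex, that $D_i$ would be a single 2-simplex on $\{a,b,c\}$, contradicting our assumption. Hence both $D_1$ and $D_2$ contain vertices of $L$, and since edges of a triangulation cannot cross $\partial\sigma$, removing $\{a,b,c\}$ disconnects $L^{(1)}$. By Lemma~\ref{lem:hyp}(1), $C(L)$ is not one-ended. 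Combining the two equivalences just proved with Lemma~\ref{lem:hyp} yields the statement. The only subtlety is parsing ``$L\setminus K$ is disconnected'' consistently (as disconnection of the induced subgraph on the complementary vertex set) and lining up that picture with the Jordan-curve dichotomy for 3-cycles in $\SS$; once that is pinned down, the argument is purely combinatorial bookkeeping.
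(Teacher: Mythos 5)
Your overall strategy --- reduce everything to Lemma~\ref{lem:hyp} and check that, for a triangulation of $\SS$, the flag condition is equivalent to the one-endedness criterion in part (1) --- is exactly what the paper does; the paper simply asserts that equivalence in one sentence, so your write-up is a legitimate expansion of it. Two steps, however, are not yet watertight. First, in the reverse direction you claim that if $L$ is not flag then some three pairwise adjacent vertices fail to bound a $2$-simplex. This is false for the boundary of the tetrahedron: it is not flag (its four vertices form a complete graph spanning no $3$-simplex), yet every $3$-cycle bounds a face, so your Jordan-curve argument never gets started. You must treat this case separately --- there $L^{(1)}$ is complete, $C(L)$ is finite, and condition (i) of Lemma~\ref{lem:hyp}(1) fails --- and then observe that any other failure of flagness on a $2$-dimensional sphere does produce an empty $3$-cycle, since a $K_4$ with all four triangles filled would already be the entire tetrahedron.

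Second, in the forward direction the assertion that removing the three vertices of a face $\sigma=abc$ leaves $L^{(1)}$ connected is true, but your justification (``the other side is a connected triangulated disk'') does not prove it: a connected triangulated disk can perfectly well have a disconnected induced graph on its interior vertices (e.g.\ a hexagon triangulated using two non-adjacent interior vertices). What saves you is that the boundary here is a triangle bounding a face: the union of the open stars of $a,b,c$ is then an open disk (its nerve is a $2$-simplex because the triple intersection is the open face $\sigma$), so its complement in $\SS$ is connected, and that complement deformation retracts onto the full subcomplex spanned by the remaining vertices. Alternatively, one can argue directly that the interior neighbours of $a$, of $b$ and of $c$ form three paths which pairwise share a vertex, and that every remaining interior vertex connects to their union. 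Either way, an argument that actually uses the triangularity of $\partial\sigma$ is needed here; with these two repairs the proof is complete and matches the paper's (unwritten) intent.
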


\section{Key ingredients of the proof} \label{s:key} 
\subsection{Cubes and spherical triangles} 

Proposition \ref{p:reflectioncube} below is an easy consequence of Andreev's theorem and will play an essential role in our proof of the main theroem.   We say that a  3-dimensional cube $P$ in $\HHH$ is a \emph{reflection invariant hyperbolic cube} (or, \emph{reflection cube} in short) if it is combinatorially isomorphic to a cube and invariant under the action of $H=\ZZ \oplus \ZZ \oplus \ZZ$ where the generators act by reflections in certain hyperplanes (called, \emph{mid-planes} of $P$).  A  fundamental domain for this action is called a \emph{right-angled slanted cube} $Q$, as shown in Figure \ref{fig:sym}.
In the figure, the vertex $O$ came from a vertex of $P$.
%When this is a right-angled slanted cube, the dihedral angles along the edges emanating from $O'$ are also right. 
Note that in this case the dihedral angles in $Q$ at the nine edges not containing $O$ are all $\pi/2$.
%See Section \ref{s:slanted} for a general \emph{slanted cube}.

\begin{proposition} \label{p:reflectioncube} Acute spherical triangles are in one-to-one correspondence with reflection invariant hyperbolic cubes.  \end{proposition}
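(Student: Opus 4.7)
The plan is to realize both directions of the correspondence via a single application of Andreev's theorem (Theorem~\ref{thm:andreev}) to the abstract combinatorial cube endowed with a $(\ZZ)^{3}$-symmetric dihedral angle function. The crucial observation is that such a symmetric assignment is determined by only three numbers $\alpha,\beta,\gamma$ (one per edge-orbit), and Andreev's four hypotheses collapse exactly to ``$\alpha,\beta,\gamma\in(0,\pi/2)$ and $\alpha+\beta+\gamma>\pi$'', which is precisely the existence condition for an acute spherical triangle with those angles.

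First I would set up the map from reflection cubes to spherical triangles. Given a reflection cube $P\subset\HHH$, the twelve edges split into three $(\ZZ)^{3}$-orbits of four parallel edges, and by symmetry all edges within an orbit share a common dihedral angle; call these $\alpha,\beta,\gamma$. Because exactly one edge from each orbit meets every vertex, Andreev's condition (1) at any vertex gives $\alpha+\beta+\gamma>\pi$. Conditions (2) and (4) are vacuous for cubes -- no three faces of a cube pairwise intersect without sharing a vertex, and a cube is not a triangular prism -- while condition (3) applied to each of the three four-face belts around the three axes forces the common belt angle to be strictly below $\pi/2$, since opposite belt faces of a cube do not meet. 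The link of any vertex of $P$ therefore has acute vertex angles $\alpha,\beta,\gamma$, and is the desired spherical triangle.

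Next I would build the inverse map. Given an acute spherical triangle with angles $\alpha,\beta,\gamma$, assigning these to the three edge-orbits of the abstract cube makes Andreev's four hypotheses hold by the same verification in reverse, so his theorem produces a hyperbolic cube $P\subset\HHH$ realizing the data, unique up to isometry. To recover the $(\ZZ)^{3}$ reflection action, I would argue that each of the three combinatorial mid-plane involutions $\sigma$ of the cube preserves the orbit-angle function, hence by Andreev's uniqueness extends to a hyperbolic isometry $\Sigma$ of $\HHH$ carrying $P$ to itself; uniqueness applied a second time gives $\Sigma^{2}=\mathrm{id}$. Since $\sigma$ fixes each of the four edges in its associated orbit setwise, $\Sigma$ fixes the four midpoints of those edges in $\HHH$, and because these four midpoints are not collinear (they are the corners of a quadrilateral cross-section of the convex body $P$), the fixed set of $\Sigma$ in $\HHH$ is $2$-dimensional. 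This forces $\Sigma$ to be a reflection in a hyperbolic plane rather than a $\pi$-rotation about a geodesic. The three commuting reflections so obtained generate the required $(\ZZ)^{3}$ action, with fundamental domain the slanted right-angled cube $Q$.

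The two constructions are mutually inverse by Andreev's uniqueness together with the fact that an acute spherical triangle is determined up to isometry by its angles. The subtlest step, and the one I expect to be the main obstacle, is verifying that the combinatorial mid-plane involutions extend to hyperbolic \emph{reflections} rather than $\pi$-rotations; the non-collinearity of the four midpoints handles this, but one must first promote each combinatorial symmetry to an honest hyperbolic isometry via Andreev's uniqueness before analyzing its fixed set.
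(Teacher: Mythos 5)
Your construction direction (acute triangle $\Rightarrow$ reflection cube) is essentially sound, but the converse direction as you present it has a genuine gap. You deduce $\alpha,\beta,\gamma<\pi/2$ for a given reflection cube by invoking Andreev's conditions as \emph{necessary} conditions, but Theorem~\ref{thm:andreev} only speaks about angle functions $\theta\co E\to(0,\pi/2]$; its conditions (1)--(4) are asserted only for polyhedra already known to be non-obtuse, and nothing in the definition of a reflection invariant hyperbolic cube bounds its dihedral angles by $\pi/2$ a priori. Worse, even granting that bound, condition (3) only forbids a belt from having all four angles \emph{equal to} $\pi/2$; the strict inequality you want comes from combining (3) with the assumed membership of $\theta$ in $(0,\pi/2]$, so the argument is circular without an independent proof that the angles are non-obtuse. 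The missing ingredient is a direct geometric one, and it is exactly what the paper uses: each mid-plane is perpendicular to the four edges of its orbit (the corresponding reflection preserves each such edge setwise while swapping its endpoints, so its fixed plane meets the edge orthogonally at the midpoint), hence it cuts $P$ in a hyperbolic quadrilateral whose four angles are the four equal dihedral angles of that orbit; since a hyperbolic quadrilateral has angle sum less than $2\pi$, these angles are acute. With that observation inserted your forward direction closes up --- and note that for condition (1) you never needed Andreev at all, since the link of a vertex of any compact hyperbolic polyhedron is a spherical triangle and so automatically has angle sum greater than $\pi$.

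Beyond the gap, your route differs from the paper's in a way worth noting. You apply Andreev to the full combinatorial cube with the symmetric angle assignment and then must upgrade the three combinatorial mid-plane involutions to honest hyperbolic reflections via Andreev's uniqueness together with a fixed-point analysis; that analysis is correct in outline (four non-collinear fixed midpoints rule out a half-turn or a point involution), but it is extra work. The paper instead applies Andreev to the right-angled slanted cube $Q$ of Figure~\ref{fig:sym} --- angles $A,B,C$ at the three edges meeting $O$ and $\pi/2$ at the remaining nine --- and obtains $P$ by reflecting $Q$ in the three faces not containing $O$, so the $\ZZ\oplus\ZZ\oplus\ZZ$ symmetry is present by construction and no rigidity argument is needed.
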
 

\begin{proof}   Let $R=ABC$ be an acute spherical triangle.
% with angles $a,b$ and $c$ all less than $\pi/2$.  
 By Theorem \ref{thm:andreev}, one can find a right-angled slanted cube $Q$ having $R$ as the link of a vertex $O$, as shown in Figure~\ref{fig:sym}. Then construct $P$ by reflecting $Q$ along the three faces that do not contain $O$. \iffalse
 Alternatively, there exists a hyperbolic polyhedron $P$ (combinatorially isomorphic to a cube) such that the four dihedral angles of $P$ intersecting each of the three (combinatorial) mid-planes are $A$, $B$ and $C$ respectively.   This labeled combinatorial cube is invariant under reflections though these mid-planes, so by the uniqueness statement in Theorem \ref{thm:andreev}, this is a reflection cube. 
 \fi

Conversely, given a reflection cube the three mid-planes cut out a hyperbolic quadrilateral with four equal angles, which must be acute. The three acute angles corresponding to the three different mid-planes are the angles of  the link of a vertex in the cube.
\end{proof} 

\begin{corollary} \label{cor:acute} The edge-lengths of an acute spherical triangle are all acute. \end{corollary}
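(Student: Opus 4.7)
The plan is to apply Proposition~\ref{p:reflectioncube} to realize $R=ABC$ as the link of a vertex $O$ in the slanted cube $Q\subset P$, where $P$ is the associated reflection cube in $\HHH$, and then exploit the $\ZZ\oplus\ZZ\oplus\ZZ$ symmetry of $P$. Under this identification, the three edge-lengths $a,b,c$ of $R$ are precisely the three face angles of $P$ at $O$, one per face of $P$ meeting at $O$ (the edge opposite the vertex of angle $A$ in $R$ gives the face angle in the face of $P$ at $O$ spanned by the two edges of dihedral angles $B$ and $C$). Hence it suffices to show that each such face angle is less than $\pi/2$.

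Fix the face $F$ of $P$ containing the two edges at $O$ whose dihedral angles are $B$ and $C$, so that the face angle of $P$ at $O$ in $F$ equals $a$. Among the three generating mid-plane reflections, exactly two stabilize $F$ setwise (the third swaps $F$ with its opposite face). Their two mid-planes meet $F$ in a pair of perpendicular bisecting geodesic arcs crossing at the center of $F$, so the induced $\ZZ\oplus\ZZ$-action is transitive on the four vertices of $F$. Hence all four interior angles of $F$ equal $a$.

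Finally, Gauss--Bonnet applied to the compact convex hyperbolic quadrilateral $F$ forces its angle sum to be strictly less than $2\pi$, giving $4a<2\pi$ and thus $a<\pi/2$. The same argument applied to the other two faces of $P$ meeting at $O$ yields $b,c<\pi/2$, proving the corollary. The only point requiring care is the symmetry claim above, namely identifying which mid-planes stabilize $F$ setwise and verifying that their traces on $F$ are perpendicular bisectors; this reduces to the coordinate description of the three mid-planes of the abstract cube and is routine. (A purely trigonometric alternative is to apply the spherical law of cosines for angles, $\cos a\sin B\sin C=\cos A+\cos B\cos C$, and observe that the right-hand side is positive whenever $A,B,C\in(0,\pi/2)$; but the geometric argument above fits more naturally with the reflection-cube perspective developed in this section.)
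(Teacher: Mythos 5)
Your proposal is correct and follows essentially the same route as the paper: the paper's proof likewise observes that each face of the reflection cube is a hyperbolic quadrilateral with four equal (hence acute) angles, and that these face angles are the edge-lengths of the vertex link. You have simply filled in the details the paper leaves implicit, namely the symmetry argument showing the four angles are equal and the Gauss--Bonnet step forcing them below $\pi/2$.
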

 \begin{proof} Each face of a reflection cube is a hyperbolic quadrilateral with four equal angles, which are acute.  These face angles are the edge-lengths of the link in the cube.
\end{proof} 

\begin{corollary} \label{cor:orthocenter} 
Every acute spherical triangle has an orthocenter.
\end{corollary}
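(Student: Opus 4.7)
The plan is to use Proposition~\ref{p:reflectioncube} to realize the acute spherical triangle $R$ as the link at a vertex $O$ of a reflection cube $P\subset\HHH$, and to identify the orthocenter of $R$ with the unit tangent $v\in\SS$ to the geodesic from $O$ to the center $Z$ of $P$ (the common intersection of the three mid-planes). Since $Z$ lies in the interior of $P$, the vector $v$ already lies in the interior of $R$, so the only thing to verify is that $v$ lies on each of the three altitudes of $R$.

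Fix a vertex $A$ of $R$, let $e_A$ be the corresponding edge of $P$ at $O$, and let $F_{BC}$ be the face of $P$ at $O$ opposite $e_A$. The altitude from $A$ in the link is the great circle through $A$ perpendicular to the opposite side $a$, and in $\HHH$ this great circle is cut out on $\SS$ by any $2$-dimensional hyperbolic plane through $O$ that contains $e_A$ and is perpendicular to the face plane of $F_{BC}$. My task is thus to produce such a plane that also contains $Z$.

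Let $r_A,r_B,r_C$ be the reflections in the mid-planes $M_A,M_B,M_C$, and let $\alpha:=M_B\cap M_C$ be the axis of the rotation $\rho:=r_Br_C$. I take $\Pi$ to be the unique $2$-plane through $O$ containing $\alpha$; since $Z\in\alpha$, the inclusion $Z\in\Pi$ is automatic. For $e_A\subset\Pi$: the reflection $r_A$ commutes with $r_B$ and $r_C$, hence preserves $M_B$, $M_C$, and so $\alpha$ setwise; since $\alpha\cap M_A=\{Z\}$, the restriction $r_A|_\alpha$ is the reflection of the geodesic $\alpha$ about $Z$, so the tangent to $\alpha$ at $Z$ is perpendicular to $M_A$. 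This forces $\Pi\perp M_A$, and since a plane perpendicular to a reflecting hyperplane is invariant under the reflection, $\Pi$ contains both $O$ and $r_A(O)$, and hence the edge $e_A$ joining them. For $\Pi\perp F_{BC}$: the rotation $\rho$ preserves $F_{BC}$ setwise and preserves its outward normal (being a symmetry of $P$), so $\rho|_{F_{BC}}$ is an order-$2$ orientation-preserving isometry of $F_{BC}$, hence a rotation by $\pi$ about a unique fixed point $W_{BC}\in\alpha\cap F_{BC}$; this forces $\alpha\perp F_{BC}$ at $W_{BC}$, and since $\alpha\subset\Pi$ we conclude $\Pi\perp F_{BC}$.

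Running the same construction with $B$ or $C$ in place of $A$ gives two further planes through $O$ and $Z$ whose great circles on $\SS$ are the altitudes from $B$ and $C$, so $v$ lies on all three altitudes and is the orthocenter of $R$. The main obstacle is translating the abelian relations among $r_A,r_B,r_C$ into the perpendicularities $\alpha\perp M_A$ and $\alpha\perp F_{BC}$, since these are genuine hyperbolic facts about the reflection cube that cannot be read off from the combinatorics of the cube alone.
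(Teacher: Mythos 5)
Your proof is correct and takes essentially the same route as the paper's: both realize $R$ as the link of a vertex $O$ via Proposition~\ref{p:reflectioncube} and identify the orthocenter with the initial direction of the diagonal from $O$ to the opposite vertex of the right-angled slanted cube (your center $Z$ of the reflection cube is exactly the paper's $O'$), exhibited as the common point of three planes through that diagonal which cut out the altitudes on $\link(O)$. The only difference is how the perpendicularities are justified: the paper reads them off the slanted cube directly ($OX$ and $O'X'$ are both perpendicular to $\square XY'O'Z'$, hence coplanar, and that plane meets the opposite face at a right angle), whereas you derive the same facts from the $\ZZ\oplus\ZZ\oplus\ZZ$ reflection symmetry of the full cube.
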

\begin{proof} 
Realize a given acute spherical triangle $R$ as the link of a vertex $O$ in a right-angled slanted cube as in Figure~\ref{fig:sym}. In the figure, the edges $OX$ and $O'X'$ lie on the same hyperplane $\Pi$ since they are both perpendicular to $\square XY'O'Z'$.
Note that $\Pi\cap \link(O)$ is a perpendicular in the triangle $R$ from a vertex to the opposite side, since $\Pi$ is perpendicular to $\square XY'O'Z'$.
Similarly we consider $\square OYO'Y', \square OZO'Z'$, to see that
$OO'\cap\link(O)$ is the orthocenter of $R$.
\end{proof} 

A right-angled slanted cube is a special case of a general \emph{slanted cube} defined as follows. Let $\mathbb{X}= \mathbb{E}^3$ or $\HHH$. 
Suppose $Q$ is a convex polyhedron in $X$ whose combinatorial structure is a cube,
with a vertex $O$.
For each edge $e$ containing $O$, we assume that $e$ is perpendicular to the face intersecting $e$ but not containing $O$. See Figure~\ref{fig:sym}.  
Then we say that $Q$ is a \emph{slanted cube} with a \emph{distinguished vertex} $O$. 
If $R$ is the link of $O$ and $R'$ is the link of the vertex $O'$ opposite to $O$, we say that $(R,R')$ is the \emph{opposite link--pair} of $Q$.
We note that the choice of a distinguished vertex is symmetric in the following sense.

\begin{lemma}\label{lem:sym}
Let $Q$ be a slanted cube in $\mathbb{X}$ and $O$ be a distinguished vertex of $Q$.
Then the vertex $O'$ opposite to $O$ satisfies the same condition as $O$;
namely, each edge $e'$ containing $O'$ is perpendicular to the face of $Q$ that intersects $e'$ but that does not contain $O'$.
\end{lemma}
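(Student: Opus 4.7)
My plan is to localize the problem at $Z'$, the vertex of the face $F := \square OXZ'Y$ opposite $O$, and to apply the spherical law of cosines to the link of $Z'$. By the three-fold symmetry of the slanted cube, it suffices to show $O'Z' \perp F$, where $F$ is the unique face of $Q$ meeting $O'Z'$ at $Z'$ but not containing $O'$; the analogous perpendicularity statements for $O'X'$ and $O'Y'$ follow by identical arguments. Since $Z'X$ and $Z'Y$ are two non-collinear edge-directions of $F$ at $Z'$, showing $O'Z' \perp F$ reduces to showing $O'Z' \perp Z'X$ and $O'Z' \perp Z'Y$ at $Z'$.

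The first step is to convert the hypothesis at $O$ into a pair of dihedral-angle statements along the edges of $F$ incident to $Z'$. Consider the edge $XZ'$, which is shared between $F$ and $F' := \square XY'O'Z'$. By hypothesis $OX \perp F'$, and $XZ' \subset F'$, so the perpendicular to $XZ'$ inside $F$ at $X$ points along $XO$. The perpendicular to $XZ'$ inside $F'$ at $X$ is tangent to $F'$ and is therefore orthogonal to $OX$. These two perpendiculars meet at $\pi/2$, so the dihedral of $Q$ along $XZ'$ is $\pi/2$. An identical computation gives dihedral $\pi/2$ along $YZ'$.

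The second step is to apply the spherical law of cosines to the link of $Z'$, the spherical triangle whose three vertices correspond to the edges $Z'X$, $Z'Y$, $Z'O'$ of $Q$. Its side lengths are the face angles $\alpha := \angle XZ'Y$, $\beta := \angle XZ'O'$, $\gamma := \angle YZ'O'$, and the angle at each vertex is the dihedral of $Q$ along the corresponding edge. Using that two of those dihedrals are $\pi/2$ yields
\[
\cos\gamma = \cos\alpha\cos\beta \quad\text{and}\quad \cos\beta = \cos\alpha\cos\gamma.
\]
Substituting one identity into the other gives $\cos\beta\,(1 - \cos^2\alpha) = 0$. Since $F$ is a non-degenerate $2$-face of a convex polyhedron, $\alpha \in (0,\pi)$ and $\cos^2\alpha \ne 1$; hence $\cos\beta = 0$, and likewise $\cos\gamma = 0$. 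Thus $\beta = \gamma = \pi/2$, which is exactly the orthogonality required at $Z'$.

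The main obstacle will be the bookkeeping in the spherical triangle: one must correctly identify each dihedral of $Q$ along an edge at $Z'$ with the angle opposite the correct side of the link of $Z'$. Once that is pinned down, the remainder is a short algebraic identity. The argument applies uniformly whether $\mathbb{X} = \EEE$ or $\HHH$, because it lives entirely in the unit tangent sphere at $Z'$ and never touches any model-dependent coordinates.
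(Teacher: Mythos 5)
Your argument is correct, but it takes a genuinely different route from the paper's. The paper's proof is a three-line synthetic argument with perpendicular planes: since $OX\perp\square XY'O'Z'$ and $OX$ lies in $\square OYZ'X$, the face $\square OYZ'X$ is perpendicular to $\square XY'O'Z'$, and similarly (via $OY$) to $\square YZ'O'X'$; the edge $O'Z'$ is the intersection of these two faces, and the intersection of two planes each perpendicular to a third plane is itself perpendicular to that third plane. You instead localize entirely in the vertex link: you first convert the hypothesis into the statement that the ``middle'' dihedral angles along $XZ'$ and $YZ'$ are right (which is itself a useful observation, implicit in the paper's remark that a convex cube as in Figure~\ref{fig:sym} is slanted iff the bold dihedral angles are right), and then run the spherical law of cosines in the link of $Z'$ to force the remaining two face angles at $Z'$ to be $\pi/2$. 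Both proofs are short and valid in $\EEE$ and $\HHH$ alike. The paper's version buys brevity at the cost of invoking the synthetic fact about intersections of mutually perpendicular planes; yours is more computational but entirely self-contained in the unit tangent sphere at $Z'$, and your identification of the link angles with dihedral angles is exactly the dictionary the paper already uses elsewhere (e.g.\ in the discussion of polar duals). One small point to keep honest: the step $\cos\beta(1-\cos^2\alpha)=0\Rightarrow\cos\beta=0$ needs $\sin\alpha\neq 0$, which you correctly justify by non-degeneracy of the face $\square OXZ'Y$ at $Z'$.
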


\begin{proof}
See Figure~\ref{fig:sym} for a labeling of vertices in a slanted cube.
%Let $X,Y$ and $Z$ be adjacent vertices to $O$ and $X',Y'$ and $Z'$ be their opposite vertices.
Since $OX$ is perpendicular to $\square XY'O'Z'$, the faces $\square OYZ'X$ and $\square OXY'Z$ are both perpendicular to $\square XY'O'Z'$. 
Similarly, the face $\square OYZ'X$ is perpendicular to $\square YZ'O'X'$.
Hence, $O'Z' = \square XY'O'Z'\cap \square YZ'O'X'$ is perpendicular to $\square OYZ'X$.
\end{proof}

So a convex hyperbolic cube as in Figure~\ref{fig:sym} is a slanted cube if and only if the dihedral angles along the edges in bold are all right.
Polar duality can be described using a slanted Euclidean cube.
\begin{lemma}\label{lem:euc}
For spherical triangles $R$ and $R'$,
there exists a slanted Euclidean cube whose opposite link-pair is $(R,R')$ 
if and only if $R$ is the polar dual of $R'$.
\end{lemma}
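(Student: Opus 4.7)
My plan is to exploit Observation~\ref{o:dual}, which says that the Gauss image of a Euclidean polyhedron at a vertex $v$ equals the polar dual of $\link(v)$, together with the fact that the Gauss image at a simple vertex is just the spherical triangle formed by the outward unit normals to the three faces meeting there. The slanted cube condition at $O$ is exactly the geometric statement that identifies the edge directions at $O$ with the face normals at $O'$, which is what will force the polar duality.

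For the forward implication, suppose $Q$ is a slanted Euclidean cube with distinguished vertices $O, O'$ and opposite link-pair $(R,R')$. By the slanted cube condition at $O$, each edge $e$ at $O$ is perpendicular to the face $F(e)$ opposite to $e$. These three faces $F(e)$ are precisely the three faces of $Q$ meeting at $O'$, and since $Q$ lies on the $O$-side of each $F(e)$, the outward unit normal to $F(e)$ is the unit tangent vector $\hat{e}$ from $O$ along $e$. Hence the Gauss image of $Q$ at $O'$ is the spherical triangle on the unit sphere with vertices $\hat{e}_1, \hat{e}_2, \hat{e}_3$, which is exactly $\link(O)=R$. By Observation~\ref{o:dual}, this Gauss image is the polar dual of $\link(O')=R'$, so $R$ is the polar dual of $R'$.

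For the converse, assume $R$ is the polar dual of $R'$. I would construct $Q$ explicitly: place $O$ at the origin of $\EEE$ and emit three rays $\rho_1,\rho_2,\rho_3$ whose unit directions $\hat{n}_1,\hat{n}_2,\hat{n}_3$ realize the vertices of $R$. Choose points $X_i\in\rho_i$ and let $F_i$ be the plane through $X_i$ perpendicular to $\rho_i$. Define $Q$ as the convex region bounded by $F_1,F_2,F_3$ together with the three planes $\pi_{ij}$ spanned by pairs of rays, taking the appropriate half-space containing the remaining ray each time. The pairwise intersections $F_i\cap F_j$ are lines parallel to $\hat{n}_i\times\hat{n}_j$ and meet in a single point $O'=F_1\cap F_2\cap F_3$. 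Thus the three edge directions at $O'$ are $\pm(\hat{n}_i\times\hat{n}_j)$, which are exactly the vertices of the polar dual of $R$, so $\link(O')=R'$, and by Lemma~\ref{lem:sym} the vertex $O'$ is also a distinguished vertex.

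The main obstacle lies in the converse, where one must verify that the construction above really produces a convex combinatorial cube in the pattern of Figure~\ref{fig:sym}, with six quadrilateral faces meeting correctly at twelve edges and eight vertices. Concretely, one needs to check that the candidate vertices $V_{ij}=\pi_{ij}\cap F_i\cap F_j$ and $O'$ lie on the correct side of all remaining bounding planes so that each bounding plane meets $Q$ in a quadrilateral. This is routine once the $|OX_i|$ are chosen so that the three capping planes $F_i$ are taken close to $O$ (so that $O'$ lies near $O$ in the positive octant cut out by the $\pi_{ij}$); aside from this combinatorial verification, the argument is driven entirely by Observation~\ref{o:dual}.
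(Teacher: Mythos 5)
Your overall route is the same as the paper's. The forward direction is correct: the slanted-cube condition identifies the unit edge-directions at $O$ with the outward normals of the three faces meeting $O'$, so the Gauss image of $O'$ is $\link(O)=R$, and Observation~\ref{o:dual} finishes. The paper instead chases angles in the planar quadrilateral $XY'O'Z'$ of Figure~\ref{fig:sym} directly, but the content is identical.

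The problem is in the converse, exactly where you located the ``main obstacle,'' and your proposed fix does not work. Taking the capping planes $F_i$ ``close to $O$'' changes nothing: rescaling all three distances $|OX_i|$ by a common factor is a similarity of the whole configuration, so whether $O'=F_1\cap F_2\cap F_3$ lies inside the cone $C$ over $R$ depends only on the \emph{ratios} $|OX_1|:|OX_2|:|OX_3|$. And these ratios genuinely matter. Writing $\hat n_1,\hat n_2,\hat n_3$ for the vertices of $R$, $G$ for their Gram matrix and $d_i=|OX_i|$, one has $O'=\sum_i c_i\hat n_i$ with $c=G^{-1}d$, and one needs $c>0$. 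For the equal-distance choice $d=(1,1,1)$ --- which is precisely the paper's own construction by tangent planes to the unit sphere --- this fails whenever $\cos b+\cos c>1+\cos a$ for some labelling; e.g.\ for the thin triangle with $\hat n_2=(1,0,0)$, $\hat n_3=(0,1,0)$, $\hat n_1=(0.7,\,0.7,\,\sqrt{0.02})$ one gets $O'=(1,1,-2.82\ldots)$, outside the cone. So this direction is incomplete both in your write-up and in the paper. The statement is nevertheless true: one must choose $d$ so that $O'$ lies in $\operatorname{int}(C)\cap\operatorname{int}(C^{*})$, where $C^{*}=\{y:\langle y,\hat n_i\rangle\ge0\}$ is the dual cone (the set swept out by $O'$ as $d$ ranges over positive triples). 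These two open cones always meet: otherwise a separating hyperplane through the origin yields $v\in C$ with $-v\in C^{*}$, whence $|v|^2=\sum_i a_i\langle v,\hat n_i\rangle\le0$, a contradiction. In the example above $d=G\mathbf{1}=(2.4,1.7,1.7)$ works, giving $O'=\hat n_1+\hat n_2+\hat n_3$. Once $O'$ is interior to both cones, the remaining face-incidence checks are the genuinely routine part. So your instinct to let the $|OX_i|$ vary was the right one; only the justification for how to choose them is wrong.
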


\begin{proof}
To prove the forward direction, let us suppose $R=ABC$ is the link of $O$ in a slanted cube as in Figure~\ref{fig:sym}
such that $A$ intersects $OX$.
Then $A$ is the dihedral angle of $OX$ and equal to
$\angle Y' X Z'$.
Since $Y'$ and $Z'$ are the feet of the perpendicular from $O'$ to $\square OXY'Z$ and $\square OXZ'Y$ respectively, we see that $A$ is complementary to $\angle Y' O' Z'$ which is the length of an edge of the link at $O'$. The rest of the proof is straightforward by symmetry.
For the reverse direction, let $R$ lie on the unit sphere $\mathbb{S}^2\subseteq\mathbb{E}^3$ centered at a point $O$
and draw the tangent planes to $\mathbb{S}^2$ at each vertex of $R$. Let $O'$ be the intersection of these  three planes, 
and $X,Y,Z$ be the tangent points. Then we have a picture as Figure~\ref{fig:sym}.
\end{proof}

\subsection{Visual sphere decomposition}

Let $P$ be an all-right hyperbolic polyhedron.
By translating $P$ if necessary, we assume that the origin $O$ of the Poincar\'e ball model is contained in the interior of $P$. 
 We denote the set of 2-faces of $P$ by $F$. 
For each $x\in F$,  let $\gamma_x$ be the geodesic ray from $O$ which extends the perpendicular to $x$.  This naturally extends to a point on the sphere at infinity $S^2_{\infty}$.  
% if $x$ is a face and approaching $v$ if $v$ is an ideal vertex.
If $x$ and $y$ are neighboring 2-faces of $P$, then we set $\delta_{xy}$ to be the sector (often called a \emph{fan}) spanned by $\gamma_x$ and $\gamma_y$.
%We similarly define $\delta_{fv}$ if an ideal vertex $v$ belongs to a face $f$.
%Choose $\epsilon>0$ such that $P$ contains the $\epsilon$-sphere $S_\epsilon$ centered at $O$.
The \emph{visual sphere decomposition} at $O$ induced by $P$ is the cellulation $T$ of $S_\infty$ where
$T^{(0)}=S_\infty\cap (\cup_{x\in F} \gamma_x)$
and $T^{(1)}=S_\infty\cap (\cup_{x,y\in F} \delta_{xy})$.
We will regard $T$ as a cellulation of the unit sphere $\mathbb{S}^2$ with the spherical metric.  The edges of the visual sphere decomposition will be geodesic, as they are the intersection of the unit sphere with planes through the origin. 
%and call $L$ as the nerve of the faces of $P$.
%Then $L$ precisely coincides with the nerve of the weak orthogonal cap labeling determined by the faces of $P$.
Combinatorially, the set of sectors $\{\delta_{xy}\co x,y\in F\}$ partition $P$ into right-angled slanted cubes.
If we draw such a slanted cube as in Figure~\ref{fig:sym}, 
then $O'$ will correspond to a vertex of $P$
and $X,Y,Z$ will correspond to the feet of perpendiculars from $O$ to the faces of $P$.
With respect to this partitioning of $P$, we see that $T$ is the link of $P$ at $O$.
By the proof of Proposition~\ref{p:reflectioncube}, the following is immediate.

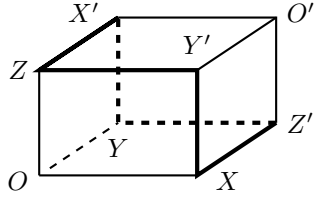
\begin{figure}[htb!]
\begin{center}
\begin{tikzpicture}[thick,scale=.7]
\draw (0,-1) node (O1) {} node [below=2pt, left] (O) {\small $O$} --(3,-1)  node (X1) {} node [below=2pt, right=3pt] (X) {\small $X$};
\draw (4.5,0)  node (ZP1) {}  node [right] (ZP) {\small $Z'$} --(4.5,2)  node [above=2pt, right] (OP) {\small $O'$} --(1.5,2)  node (XP1) {}  node [above=2pt, left=3pt] (XP) {\small $X'$};
\draw (0,1) node [left] (Z) {\small $Z$} -- (0,-1);
\draw (3,1)   node [above=2pt] (Y') {\small $Y'$} --(4.5,2);
\draw [ultra thick]  (1.5,2)--(0,1)--(3,1)--(3,-1)--(4.5,0) (1.5,2)--(0,1);
\draw [dashed] (O1) -- (1.5,0) node (Y1) {}   node [below=2pt] (Y) {\small $Y$} ;
\draw [ultra thick,dashed] (4.5,0)--(1.5,0) -- (1.5,2);
\end{tikzpicture}
\end{center}
\caption{A slanted cube. The dihedral angles in bold are right (Lemma~\ref{lem:sym}). }\label{fig:sym}
\end{figure}

\begin{lemma}\label{lem:visual}
The visual sphere decomposition $T$ obtained from an all-right hyperbolic polyhedron $P$ is acute.
\end{lemma}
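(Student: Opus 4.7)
The plan is to match each $2$-face of $T$ with the link of $O$ in one of the right-angled slanted cubes furnished by the partition of $P$, and then invoke Proposition~\ref{p:reflectioncube} to conclude that each such face is an acute spherical triangle. The edges of $T$ are automatically geodesic in $\SS$ since each is the intersection of $S^2_\infty$ with a $2$-plane through $O$, so the only real work is to identify the metric shapes of the $2$-cells.

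First I would confirm that each piece $Q_v$ of the partition (indexed by a vertex $v$ of $P$, where three faces $x,y,z$ of $P$ meet) really is a right-angled slanted cube. A Gauss--Bonnet count on the spherical link of $v$ shows that $P$ is simple: a convex right-angled spherical $n$-gon has area $(4-n)\pi/2$, which forces $n=3$. Hence $Q_v$ is combinatorially a cube with $O$ and $v$ at opposite vertices. The three edges of $Q_v$ at $O$ are segments of $\gamma_x,\gamma_y,\gamma_z$, perpendicular to the opposite faces by definition, so $Q_v$ is a slanted cube with distinguished vertex $O$. Of the nine edges of $Q_v$ not containing $O$, the three incident to $v$ are edges of $P$ and hence have dihedral angle $\pi/2$ because $P$ is all-right; each of the remaining six lies on some face $x_i$ of $P$ at its intersection with a cutting plane through $O$ containing $\gamma_{x_i}$, and since $\gamma_{x_i}\perp x_i$ this cutting plane is perpendicular to $x_i$, yielding dihedral angle $\pi/2$ there as well. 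Thus every $Q_v$ is a right-angled slanted cube.

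Second, by the description immediately preceding the lemma, the $2$-cell of $T$ bounded by the three vertices $\gamma_x\cap S^2_\infty$, $\gamma_y\cap S^2_\infty$, $\gamma_z\cap S^2_\infty$ is precisely the link of $O$ in $Q_v$, transported radially to $S^2_\infty$. By Proposition~\ref{p:reflectioncube}, this link is an acute spherical triangle. The simplicial-triangulation structure itself is inherited by duality from $\partial P$, which is a simple convex polyhedral $2$-sphere. The main obstacle is the verification in the previous paragraph that the cubes $Q_v$ are right-angled; once that is granted, the lemma is a formal consequence of Proposition~\ref{p:reflectioncube}.
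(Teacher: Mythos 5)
Your proposal is correct and follows essentially the same route as the paper: decompose $P$ by the sectors $\delta_{xy}$ into right-angled slanted cubes with distinguished vertex $O$, identify each $2$-face of $T$ with the link of $O$ in such a cube, and invoke (the converse direction of the proof of) Proposition~\ref{p:reflectioncube}. The paper simply asserts the partition into right-angled slanted cubes and calls the conclusion immediate, whereas you verify the right angles and the simplicity of $P$ explicitly (the latter is already part of the paper's standing convention on polyhedra); these are accurate elaborations, not a different argument.
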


%\begin{proof}Divide the polyhedron $P$ into cubes as above in the visual cube decomposition.  Each cube will be as in Figure~\ref{fig:sym}.  Since this is a right-angled polyhedron, these cubes are the fundamental domains of reflection invariant hyperbolic cubes.  Thus the link of $O$ is an acute spherical triangle by Proposition \ref{p:reflectioncube}.  The link of $O$ is exactly the spherical triangle cut out by the fans on $S_\infty$.  Thus each triangle is an acute spherical triangle. \end{proof}

We denote by $R_{2,2,2}$ the all-right spherical triangle.
Let $H=\ZZ \oplus \ZZ \oplus \ZZ$  act on the unit $\mathbb{S}^2$ by reflections through mutually perpendicular great circles
so that the underlying space of the quotient orbifold is $R_{2,2,2}$.
The \emph{$(2,2,2)$-tessellation by $R_{2,2,2}$} is defined as the universal cover of this quotient orbifold with the tiling induced by the cover.  
Concretely, the $(2,2,2)$-tessellation by $R_{2,2,2}$ is
the tiling of $\mathbb{S}^2$ cut out by the three perpendicular great circles.
%This is a metric copy of $\mathbb{S}^2$, tiled by copies of the fundamental domain.  
\begin{definition}\label{defn:222}  
Suppose $R$ is a spherical triangle.
Let us substitute $R$ for each copy of $R_{2,2,2}$ in the $(2,2,2)$-tessellation by $R_{2,2,2}$.
The resulting 2-complex $Y$ is called the  \emph{$(2,2,2)$-tessellation by $R$}.
Here we require that $Y$ is invariant under the combinatorial symmetry induced by $H$.
\end{definition}

The complex $Y$ is homeomorphic to $\mathbb{S}^2$, and the quotient orbifold by $H$ is orbifold-homeomorphic to the quotient orbifold of $\mathbb{S}^2$ by $H$.
For information on orbifolds and geometric orbifolds, we refer the reader to \cite{orbifoldbook}, \cite{frenchorbifoldbook}.  

A spherical triangle is \emph{strongly obtuse} if its dihedral angles and edge-lengths are all obtuse.  

\begin{lemma}\label{lem:222}
The $(2,2,2)$-tessellation $Y$ by a strongly obtuse spherical triangle $R$ is strongly $\CAT(1)$.
\end{lemma}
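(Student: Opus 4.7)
The plan is to realise $Y$ as the Gaussian image of a compact hyperbolic polyhedron and invoke Theorem~\ref{thm:HR}, whose necessary conditions coincide exactly with Definition~\ref{defn:strongly cat1}. Let $R^\ast$ denote the polar dual of $R$; its angles are $\pi-a,\pi-b,\pi-c$ and its sides $\pi-A,\pi-B,\pi-C$, so the strong obtuseness hypothesis on $R$ is exactly the statement that $R^\ast$ is acute. Proposition~\ref{p:reflectioncube} then produces a reflection invariant hyperbolic cube $P\subseteq\HHH$ whose right-angled slanted-cube fundamental domain has $R^\ast$ as the link of its distinguished vertex. Because $H$ acts transitively on the eight vertices of $P$, every vertex link is isometric to $R^\ast$, and the 12 edges of $P$ partition into three $H$-orbits of four parallel edges bearing dihedral angles $\pi-a$, $\pi-b$, $\pi-c$ respectively.

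Next I identify $G(P)$ with $Y$. By definition each of the eight 2-cells of $G(P)$ is the polar dual of the corresponding vertex link of $P$, and by involutivity of polar duality each is isometric to $R$. Combinatorially $G(P)$ is the octahedron dual to $\partial P$, and the six 2-cells of $P$ give the six vertices of $G(P)$. A 2-cell $F$ of $P$ perpendicular to the axis carrying the $\pi-c$-edges is a hyperbolic quadrilateral all of whose vertex-angles equal $\pi-C$; under polar duality this corner is the $C$-corner of the copy of $R$ at the $G(P)$-vertex dual to $F$, so four copies of $R$ meet there at their $C$-corners and produce cone angle $4C$. Symmetric statements give cone angles $4A$ and $4B$ at the other two antipodal pairs of $G(P)$-vertices. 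Likewise, the three parallelism classes of edges of $P$ yield three combinatorial 4-cycles of $G(P)$-edges of lengths $4a,4b,4c$, and the $H$-action on $P$ descends to $G(P)$. These data coincide exactly with Definition~\ref{defn:222}, so $G(P)$ is isometric to $Y$ as a piecewise-spherical complex.

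Applying Theorem~\ref{thm:HR} to $G(P)\cong Y$ then delivers both defining properties of Definition~\ref{defn:strongly cat1}: all cone angles exceed $2\pi$ and every closed geodesic has length strictly greater than $2\pi$, so $Y$ is strongly $\CAT(1)$. The main obstacle is the identification $G(P)\cong Y$: one has to track three parallel correspondences (six 2-cells of $P$ versus six cone points $4A,4B,4C$ of $G(P)$; three parallelism classes of edges of $P$ versus three 4-cycles of lengths $4a,4b,4c$; the $H$-action on both sides) and use uniqueness of the $H$-symmetric octahedral gluing of eight copies of $R$. Once that identification is in place, the genuinely analytic content of strong $\CAT(1)$-ness — ruling out short closed geodesics — is imported wholesale from Hodgson--Rivin, rather than being recovered by hand via Gauss--Bonnet or a developing-map argument.
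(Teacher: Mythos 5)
Your proof is correct and follows essentially the same route as the paper: pass to the polar dual $R^\ast$ (which is acute by strong obtuseness of $R$), invoke Proposition~\ref{p:reflectioncube} to obtain a reflection invariant hyperbolic cube with vertex links $R^\ast$, identify its Gauss image with $Y$, and conclude via Theorem~\ref{thm:HR}. The paper states the identification $G(P)\cong Y$ in one line, whereas you spell out the combinatorial and symmetry bookkeeping; that added detail is consistent and harmless.
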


\begin{proof}
Let $R^*$ be the polar dual of $R$.
By Proposition~\ref{p:reflectioncube} there exists a reflection invariant hyperbolic cube where the link of each vertex is $R^*$. The Gauss image of this reflection invariant hyperbolic cube is $Y$. 
The proof is complete by Theorem~\ref{thm:HR}.
\end{proof}

\begin{remark}\label{rem:metric flag}
The complex $Y$ in the above lemma is a \emph{metric flag complex} as defined by Moussong.
Hence Moussong's lemma also proves that $Y$ is $\CAT(1)$~\cite[Lemma I.7.4]{Davisbook}.
However, we found the geometric approach above more generalizable for the purpose of this paper; see Lemma~\ref{lem:fat cat 22p}.
\end{remark}

\subsection{Dual Davis complex}\label{s:dual davis}
We let $L$ be an abstract triangulation of $\mathbb{S}^2$ 
and $P$ be an abstract polyhedron whose boundary is combinatorially dual to $L$.
Recall that $C(L)$ denotes the right-angled Coxeter group defined by $L$.

\begin{definition}\label{defn:dual222}
We define the \emph{dual Davis complex of $C(L)$ with respect to $P$} as the cell complex 
$X$ obtained from $P\times C(L)$ by the following gluing map:
if $g\in C(L), s\in L^{(0)}$ and $F_s$ is the face of $P$ corresponding to $s$,
then we identify $(x,g)$ to $(x,gs)$ for each point $x$ in $F_s$.
\end{definition} 
In the above definition, $C(L)$ is the group generated by reflections in the faces of the topological ball $P$, stipulating only that the reflections in neighboring faces commute.
This defines a reflection orbifold and the dual Davis complex with respect to $P$ is the universal cover of this orbifold tiled by $P$.  
%In general, the complex $X$ is a piecewise spherical, Euclidean or hyperbolic cell complex, depending on the type of $P$.  
When $C(L)$ is one-ended and word hyperbolic, there is a hyperbolic polyhedron $Q$ such that the dual Davis complex of $C(L)$ with respect to $Q$ is metrically $\HHH$ by \cite{BLP2005}.  The right-angled hyperbolic polyhedron $Q$ is the underlying space of the hyperbolic reflection orbifold.  This can also be seen using Andreev's Theorem \ref{thm:andreev}.  

%, or simply Davis' description combined with 

Let $\Gamma$ be the Cayley graph of $C(L)$ with respect to the standard generating set $L^{(0)}$.
Geometrically, we can regard the dual Davis complex $X$ as the cell complex obtained by placing a copy of $P$ at each vertex of $\Gamma$ and gluing the faces by a reflection according to the edges of the Cayley graph.
Note that $\Gamma$ is bipartite so we can place $P$ with alternating orientations and then require that each gluing map is orientation-reversing. 
It follows that $X$ is an orientable 3-manifold.

\begin{remark}\label{rem:davis}
As we are assuming $L$ is an abstract triangulation of $\mathbb{S}^2$,
we see that $X$ is homeomorphic, and combinatorially dual to, the complex $\widetilde{P_L}$ defined by Davis in~\cite[p.11]{Davisbook}.
Under the additional hypothesis that $L$ is flag, the complex $\widetilde{P_L}$ is called the \emph{Davis complex} of $C(L)$.
See also Definition~\ref{defn:dualpqr} and Lemma~\ref{lem:hypgen}.
\end{remark}
The following observation is immediate.

\begin{observation}\label{obs:link}
The link of each vertex in $X$ is the $(2,2,2)$-tessellation by the link of a vertex in $P$.
\end{observation}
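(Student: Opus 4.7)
The plan is to unpack the local combinatorial structure of $X$ at an arbitrary vertex $v$ and recognize it as a $(2,2,2)$-tessellation.

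First, I would fix a vertex $v$ of $X$. By Definition~\ref{defn:dual222}, $v$ is the image of some vertex $v_P$ of a copy of $P$ under the face identifications. Since $L$ is an abstract triangulation of $\SS$ and $\partial P$ is combinatorially dual to $L$, the polyhedron $P$ is simple, so exactly three $2$-faces $F_a, F_b, F_c$ of $P$ meet at $v_P$, where $a,b,c\in L^{(0)}$ are the vertices of the $2$-simplex of $L$ dual to $v_P$. The pairs $\{a,b\},\{b,c\},\{a,c\}$ all lie in $L^{(1)}$, so the involutions $a,b,c$ pairwise commute in $C(L)$, and $H:=\langle a,b,c\rangle\cong (\Z/2)^3$.

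Next, I would identify the copies of $P$ meeting at $v$. The face-pairings $(x,g)\sim (x,gs)$ of Definition~\ref{defn:dual222} imply that two copies $(P,g)$ and $(P,g')$ share the vertex $v$ if and only if $g^{-1}g'\in H$. Since $a,b,c$ are distinct nontrivial pairwise-commuting reflections, all eight elements of $H$ yield eight distinct copies of $P$ meeting at $v$. Two copies indexed by $h$ and $hs$, for $s\in\{a,b,c\}$, are glued along the face $F_s$; at the level of the link $R:=\link(v_P,P)$, this is a gluing along the edge of the spherical triangle $R$ dual to $F_s$.

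Putting this together, the link of $v$ in $X$ consists of eight copies of $R$, glued along their edges by three commuting combinatorial reflections corresponding to $a,b,c$, with $H$ acting freely transitively on the eight triangles. This is precisely the combinatorial pattern of the $(2,2,2)$-tessellation of $\SS$ by the all-right triangle $R_{2,2,2}$ --- namely the eight-fold tiling cut out by three mutually perpendicular great circles --- with $R$ substituted for $R_{2,2,2}$. By Definition~\ref{defn:222}, this is the $(2,2,2)$-tessellation by $R$, which proves the claim.

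The main obstacle is purely bookkeeping: verifying that the three face identifications produce a well-defined $H$-action on the link without introducing further identifications, and that adjacent copies of $R$ glue along matching edges. Both facts follow immediately from the commutativity of $a,b,c$ in $C(L)$ together with the prescription in Definition~\ref{defn:dual222}, so no metric input is needed beyond the combinatorics of $P$ and $L$.
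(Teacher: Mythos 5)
Your proof is correct and is exactly the definitional unwinding the paper has in mind: the paper states this observation without proof ("immediate"), and your argument---eight copies of $P$ indexed by $H=\langle a,b,c\rangle\cong(\Z/2)^3$ meeting at $v$, with links glued along edges according to the face pairings---is precisely why. No discrepancy to report.
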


\begin{lemma}\label{lem:oneend}
Let $L$ be an abstract triangulation of $\mathbb{S}^2$.
A dual Davis complex $X$ of $C(L)$ is contractible if and only if $C(L)$ is one-ended.
\end{lemma}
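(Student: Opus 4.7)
The plan is to establish the two directions separately, exploiting that $X$ is an orientable boundaryless 3-manifold (as noted in the paragraph before Remark \ref{rem:davis}) on which $C(L)$ acts properly and cocompactly with fundamental domain $P$.

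For the forward direction, I assume $X$ is contractible. Since $X$ is boundaryless, it cannot be compact --- a closed 3-manifold has $H_3\cong\Z$ and hence is not contractible --- so $X$ is noncompact and $C(L)$ is infinite. A contractible noncompact $n$-manifold with $n\ge 2$ has exactly one end; this is a standard consequence of Poincar\'e--Lefschetz duality $H^k_c(X)\cong H_{n-k}(X)$ combined with the long exact sequence comparing compactly supported cohomology with the cohomology of the end space. By the \v{S}varc--Milnor lemma, the proper cocompact action yields that $C(L)$ is quasi-isometric to $X$, so $C(L)$ is itself one-ended.

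For the reverse direction, I assume $C(L)$ is one-ended. Lemma \ref{lem:hyp}(1) converts this hypothesis into the combinatorial statement that $L^{(1)}$ is not complete and no complete subgraph of $L^{(1)}$ separates $L$. Since $L\approx\SS$, a direct Jordan-curve check shows that a single vertex or a single edge cannot separate $L$, while a 3-cycle $K\subseteq L^{(1)}$ separates $L$ precisely when $K$ is an empty triangle (i.e., not the boundary of a 2-face); hence the hypothesis is equivalent to $L$ being flag and $L\ne K_4$. Under these conditions, $X$ is homeomorphic to the Davis complex $\widetilde{P_L}$ of the right-angled Coxeter system $(C(L),L^{(0)})$ (Remark \ref{rem:davis}), and Davis' theorem guarantees it is contractible. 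Alternatively, one may cubically subdivide each chamber $P$ and invoke Gromov's link condition (Theorem \ref{thm:gromov}): flagness of $L$ translates directly into flagness of the vertex links of the cubulation, so $X$ becomes a $\CAT(0)$ cube complex and hence is contractible. The main obstacle is thus the contractibility claim in this reverse direction, which is essentially Davis' theorem; the cubical-subdivision approach sidesteps citing it but reduces, again, to the same flagness statement via Gromov's link condition.
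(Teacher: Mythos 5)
Your proof is correct, and it differs from the paper's in one of the two directions. The paper's entire proof is two sentences: it identifies $X$ with Davis's complex $\widetilde{P_L}$ (Remark~\ref{rem:davis}), cites \cite[Proposition 1.2.3]{Davisbook} for the equivalence ``$\widetilde{P_L}$ is contractible if and only if $L$ is flag,'' and then applies Lemma~\ref{lem:hyp} (via the discussion before Lemma~\ref{lem:sep}) to convert flagness into one-endedness. Your reverse direction is exactly this argument, including the alternative cubulation/Gromov-link-condition route, which still rests on the same flagness statement. Your forward direction, however, replaces the ``only if'' half of Davis's proposition with a purely topological argument: since $X$ is an open orientable $3$--manifold, contractibility forces exactly one end by Poincar\'e duality for compactly supported cohomology, and \v{S}varc--Milnor transfers this to $C(L)$. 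That is a legitimate and self-contained shortcut which avoids the combinatorial characterization on that side entirely (and would apply to any contractible manifold model of the group); the paper's route is shorter only because Davis's result is already stated as an equivalence. Two minor points worth tightening: the clause ``$L$ is flag and $L\ne K_4$'' is redundant, since the boundary of the tetrahedron is not flag (its four vertices span a complete graph but no $3$--simplex); and to invoke \v{S}varc--Milnor you should equip $X$ with a proper geodesic metric (e.g.\ piecewise Euclidean with a single isometry type of chamber) and note that the action is proper because the point stabilizers arising in the construction of $X$ are finite.
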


\begin{proof}
The complex $\widetilde{P_L}$ in Remark~\ref{rem:davis} is contractible if and only if $L$ is flag~\cite[Proposition 1.2.3]{Davisbook}. Lemma~\ref{lem:hyp} then applies.
\end{proof}

\begin{remark}\label{rem:2dim}
The $(2,2,2)$-tessellation by a spherical triangle can also be regarded as the dual Davis complex of the group $\Z_2\oplus\Z_2\oplus\Z_2$ in one lower dimension.
\end{remark}
%We remark that an almost identical definition for visual sphere decomposition is possible when $P$ is only assumed to be non-obtuse.
\section{Proof of Theorem~\ref{th:main racg}}\label{s:main}

We say a polyhedron is \emph{strongly obtuse} if all the face angles and the dihedral angles are obtuse.

\begin{lemma} \label{lem:Gaussimage} Let $T$ be an acute triangulation of $\mathbb{S}^2$.  Then there is a strongly obtuse Euclidean polyhedron $P$ such that $T$ is the Gauss image of $P$. 
\end{lemma}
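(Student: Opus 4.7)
The strategy is to realize $P$ as the polar dual of the convex hull of the vertex set of $T$. Let $V = T^{(0)} \subset \SS \subset \EEE$, identifying $\SS$ with the unit sphere. I will take $Q = \mathrm{conv}(V)$, a convex Euclidean polyhedron inscribed in $\SS$, and define $P = Q^{\circ} = \{x \in \EEE : \form{x,v}\le 1 \text{ for all } v \in V\}$. By polar duality, the facets of $P$ are in bijection with the vertices of $V$ (each facet having outward unit normal equal to the corresponding $v$), and the vertices of $P$ correspond bijectively to the facets of $Q$.

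The heart of the argument is to show that the radial projection of $\partial Q$ onto $\SS$ recovers $T$ exactly; equivalently, $T$ is the spherical Delaunay triangulation of $V$. For each face $uvw$ of $T$, this amounts to checking that the plane through $u,v,w$ in $\EEE$ supports $Q$, so that every other $v' \in V$ lies on the origin side. Acuteness of $T$ supplies the local Delaunay condition: for any two triangles $uvw$ and $uvw'$ sharing an edge $uv$, the opposite angles $\angle uwv$ and $\angle uw'v$ are both less than $\pi/2$, so their sum is less than $\pi$, which is the spherical inscribed-angle criterion. A standard flipping argument then promotes local to global Delaunay. In degenerate configurations where four or more vertices of $V$ lie on a common small circle of $\SS$, I will replace the uniform offset in the definition of $P$ by carefully chosen positive weights $r_v$ in order to select the specific diagonals prescribed by $T$ within the non-simplicial faces of $Q$. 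Once this is established, $\partial P$ is combinatorially dual to $T$; since $P$ is Euclidean, the Gauss image $G(P)$ carries the round metric on $\SS$ with geodesic cellulation, so $G(P)$ coincides with $T$ metrically as well as combinatorially.

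Finally, I verify that $P$ is strongly obtuse. Each edge $e$ of $P$ has exterior dihedral angle equal to the length of the corresponding edge of $T$, which is less than $\pi/2$ by Corollary~\ref{cor:acute}; hence the interior dihedral angle at $e$ is obtuse. By Observation~\ref{o:dual}, the link of each vertex $v$ of $P$ is the polar dual of the corresponding face of $T$; since that face has all angles less than $\pi/2$, the polar dual has all edge lengths greater than $\pi/2$, and these edge lengths are precisely the face angles of $P$ at $v$, so every face angle of $P$ is obtuse. The main obstacle will be the Delaunay step, in particular dealing with the degenerate case where four or more vertices of $V$ are cospherical on $\SS$; resolving this requires the weighted-offset refinement described above, guided by the combinatorial data of $T$.
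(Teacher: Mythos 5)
Your construction coincides with the paper's: the polar dual $P=Q^{\circ}$ of $\mathrm{conv}(T^{(0)})$ is exactly the intersection of the closed half-spaces bounded by the tangent planes to $\SS$ at the vertices of $T$, which is the polyhedron the paper takes, and your verification of strong obtuseness via polar duality of the vertex links and the exterior dihedral angles is the same computation the paper performs through Lemma~\ref{lem:euc} and Corollary~\ref{cor:acute}. The Delaunay/empty-circumcircle step you elaborate (including the cospherical degeneracy) is precisely the content that the paper compresses into the assertion that the cone over each face of $T$ cuts a slanted cube out of $P$ and that $T=G(P)$, so you are filling in a detail rather than taking a different route.
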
 

\begin{proof}
The argument is similar to the proof of Lemma~\ref{lem:euc}.
Consider the collection of the tangent planes to $\mathbb{S}^2$ at the vertices of $T$. 
The intersection of the closed half-spaces bounded by these planes and containing $\mathbb{S}^2$ is a convex Euclidean polyhedron $P$.
If we cut $P$ along the triangular open cone determined by the three vertices of an original spherical triangle $R$ we obtain a slanted cube with opposite link-pair $(R,R')$. By Lemma~\ref{lem:euc}, we see that $R'$ is the polar dual of $R$ and hence, strongly obtuse. Since $R'$ is the link of a vertex at $P$, we see that $P$ is strongly obtuse.
Note $T=G(P)$.
\end{proof}

\begin{proof}[Proof of Theorem~\ref{th:main racg}]
Suppose that $L$ is an abstract triangulation of $\mathbb{S}^2$
such that $C(L)$ is one-ended and word-hyperbolic. 
Lemma~\ref{lem:sep} implies that $L$ is flag no-square.
By Corollary~\ref{cor:allright}, there exists an all-right hyperbolic polyhedron $P$ whose boundary is combinatorially dual to $L$.
By Lemma~\ref{lem:visual}, we can realize $L$ as an acute triangulation of $\mathbb{S}^2$.

Conversely, suppose $T$ is an acute triangulation of $\mathbb{S}^2$.
By Lemma \ref{lem:Gaussimage}, there is a strongly obtuse Euclidean polyhedron $P_E$ such that $T$ is the Gauss image of $P_E$.  
Translate $P_E$ so that the origin is in the interior, and let $\lbrace x_1,x_2,\ldots,x_n \rbrace$ be the vertices of $P_E$.  Then the convex hull of the vertices ${\epsilon x_1, \epsilon x_2,\ldots, \epsilon x_n}$ is a hyperbolic polyhedron $P_H$ in $\HHH$ considered in the disc model, for a sufficiently small positive real number $\epsilon$.  The dihedral and face angles of $P_H$ are only slightly smaller than those of $P_E$. 
 In particular, we may assume by taking $\epsilon$ small enough that $P_H$ is strongly obtuse.  
We see that $\partial P_H$ is still combinatorially dual to $T$.
Let $X$ be the dual Davis complex of $C(T)$ with respect to $P_H$.
Then the link of each vertex of $X$ is $(2,2,2)$-tessellation of a strongly obtuse spherical triangle,
which is the link of a vertex in $P_H$.
By Lemma~\ref{lem:222}, the link of $X$ is strongly $\CAT(1)$.
Since $X$ is a piecewise hyperbolic cell complex, it follows that $X$ is $\CAT(-1)$.
Lemma~\ref{lem:cathyp} and Lemma~\ref{lem:oneend} imply that
$C(T)$ is word-hyperbolic and one-ended, respectively.
\end{proof}

\section{General Coxeter groups}\label{s:coxeter}
Let $Y$ be a simplicial complex and $\mathbf{m}\co Y^{(1)}\to \{2,3,4,\ldots\}$ be a labeling of the edges.
Then the \emph{Coxeter group} defined by $(Y,\mathbf{m})$ is the group presentation
\[
W(Y,\mathbf{m}) = \form{
Y^{(0)}
\;|\;
v^2=1\text{ for each vertex }v\text{ and }
(uv)^{\mathbf{m}(u,v)}=1
\text{ for each edge }
\{u,v\}
}.
\]

The right-angled Coxeter group $C(Y)$ is equal to $W(Y,\mathbf{2})$ where $\mathbf{2}$ denotes the constant function of value two. If $Y'$ is a subgraph spanned by $S\subseteq Y^{(0)}$
then $W'=\form{S}\le W$ is isomorphic to $W(Y',\mathbf{m}')$ where $\mathbf{m}'$ is the restriction of $\mathbf{m}$ to $Y'$~\cite[Theorem 4.1.6]{Davisbook}. We say $W'$ \emph{is induced by} $Y'$.

We let 
\[
W_{p,q,r}
 = \form{a,b,c\mid a^2=b^2=c^2=(bc)^p=(ca)^q=(ab)^r=1}\]
 denote the Coxeter group on a triangle such that the edge labels are $p,q$ and $r$.
Note that $W_{p,q,r}$ is finite if and only if the edge--labels $p,q,r$ satisfy the inequality $1/p+1/q+1/r > 1$. 
This is equivalent to that there exists a spherical triangle with dihedral angles $\pi/p, \pi/q$ and $\pi/r$.
Such a triangle is called a \emph{$(p,q,r)$-triangle} and denoted as $R_{p,q,r}$. 
The following is analogous to Definition~\ref{defn:222}.

\begin{definition}\label{defn:pqr}
Let $p,q,r$ be positive integers larger than 1
such that $1/p+1/q+1/r>1$.
Choose an arbitrary spherical triangle $R=ABC$
and regard the generators $a,b,c$ as the edges $BC, CA, AB$ of $R$.
Take the disjoint union of spherical triangles $R\times W_{p,q,r}$
 and identify $(x,g)$ to $(x,gs)$ whenever $g\in W_{p,q,r}$ and $x$ is on the edge of $R$ that corresponds to the generator $s$. 
 The resulting piecewise spherical complex is called the \emph{$(p,q,r)$-tessellation by $R$}.
\end{definition}

Note that the $(p,q,r)$-tessellation by $R_{p,q,r}$ is isometric to $\mathbb{S}^2$, as it is the universal cover of a spherical $(p,q,r)$ reflection orbifold. 
If $L$ is an abstract triangulation of $\mathbb{S}^2$
and $\mathbf{m}\co L^{(1)}\to\{2,3,4,\ldots\}$ is a labeling,
then we say $(L,\mathbf{m})$ is a \emph{labeled abstract triangulation} of $\mathbb{S}^2$.
We generalize the notion of dual Davis complex for Coxeter groups.

\begin{definition}\label{defn:dualpqr}
Let $(L,\mathbf{m})$ be a labeled abstract triangulation of $\mathbb{S}^2$ and put $W=W(L,\mathbf{m})$.
We assume that each face in $L$ induces a finite Coxeter group in $W$.
Take an arbitrary topological 3-ball $P$ and give $\partial P$ the dual 2-complex structure of $L$.
Each vertex $s$ of $L$ corresponds to a face $F_s$ of $P$ by the duality.
The \emph{dual Davis complex of $W$ with respect to $P$} is defined as the quotient of
 the disjoint union of polyhedra $P\times W$ by the following gluing rule:
for $g\in W, s\in L^{(0)}$ and $x\in F_s$ we identify $(x,g)$ to $(x,gs)$.
\end{definition}

As in the case of right-angled Coxeter group, 
we can regard $X$ as obtained by placing $P$ at each vertex of the Cayley graph $\Gamma$ of $W$
and gluing the faces that corresponds to the endpoints of each edge in $\Gamma$.
Then $W$ acts naturally on $X$ so that each generator ``reflects'' (namely, swaps) adjacent copies of $P$.
Each vertex $v$ of $X$ is the image of a vertex $u$ in $P$, which then corresponds to a triangle $ABC$ of $L$. Say this triangle is labeled by $p,q$ and $r$. We have the following; see Observation~\ref{obs:link}. 
\begin{observation}\label{obs:linkpqr}
In Definition~\ref{defn:dualpqr}, the link in $X$ is the $(p,q,r)$-tessellation by a link in $P$ and hence homeomorphic to $\mathbb{S}^2$.
\end{observation}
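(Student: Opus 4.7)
The plan is to unravel the gluings defining $X$ in a small neighborhood of $v$ and match them term-by-term with Definition~\ref{defn:pqr}. Let $u$ be the vertex of $P$ whose image under the quotient is $v$. Since $\partial P$ is combinatorially dual to $L$, the vertex $u$ lies in exactly three $2$-faces $F_a, F_b, F_c$ of $P$, where $\{a,b,c\}$ spans a triangle $T$ of $L$; write the edge labels as $p=\mathbf{m}(\{b,c\})$, $q=\mathbf{m}(\{c,a\})$, and $r=\mathbf{m}(\{a,b\})$. Then $R := \link_P(u)$ is a topological triangle whose three edges are $R\cap F_a$, $R\cap F_b$, and $R\cap F_c$.

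First I would identify the stabilizer of $v$ in $W$. By the defining equivalence relation on $P\times W$, the preimage of $v$ is $\{(u,g) : g\in \mathrm{Stab}_W(v)\}$, and the relation $(x,h)\sim(x,hs)$ is applicable at $u$ only when $s\in\{a,b,c\}$. Iterating, every $g$ in the stabilizer lies in $\langle a,b,c\rangle$; conversely, every word in $\{a,b,c\}$ yields such a chain of identifications starting at $(u,1)$, so every $g\in\langle a,b,c\rangle$ fixes $v$. By the induced-subgroup theorem quoted at the start of Section~\ref{s:coxeter}, this subgroup is isomorphic to $W_{p,q,r}=\langle a,b,c\mid a^2=b^2=c^2=(bc)^p=(ca)^q=(ab)^r=1\rangle$, which is finite by the standing hypothesis on $(L,\mathbf{m})$ in Definition~\ref{defn:dualpqr}.

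Next I would describe the link gluing explicitly. A small neighborhood of $v$ in $X$ is built from $|W_{p,q,r}|$ copies of the cone on $R$, one from each copy of $P$ indexed by an element of the stabilizer. The edge $R\cap F_s$ of the link at $u$ in $P\times\{g\}$ is glued to the corresponding edge in $P\times\{gs\}$ for each $s\in\{a,b,c\}$. Identifying the generator $s$ of $W_{p,q,r}$ with the edge $R\cap F_s$ of $R$, this is verbatim the identification of Definition~\ref{defn:pqr}. Hence $\link_X(v)$ is the $(p,q,r)$-tessellation by $R=\link_P(u)$.

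The homeomorphism with $\SS$ then follows from the remark immediately after Definition~\ref{defn:pqr} applied to the model triangle $R_{p,q,r}$: the combinatorial pattern of the $(p,q,r)$-tessellation depends only on the triple $(p,q,r)$, and the $(p,q,r)$-tessellation by $R_{p,q,r}$ is isometric to $\SS$. The only step that needs careful checking is the claim that $\mathrm{Stab}_W(v)=\langle a,b,c\rangle$; everything else is a direct unpacking of the two definitions.
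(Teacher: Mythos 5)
Your proof is correct and follows the same route the paper intends: the paper states this as an immediate observation (deferring to Observation~\ref{obs:link} in the right-angled case), and your argument simply fills in the details of that unpacking --- identifying the fiber over $v$ with $\langle a,b,c\rangle\cong W_{p,q,r}$ via the generated equivalence relation and matching the induced edge identifications on $\link_P(u)$ with Definition~\ref{defn:pqr}. The step you flag as needing care is indeed the only nontrivial one, and your treatment of it is right.
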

Therefore, $X$ is a manifold. 

A particularly interesting case of a dual Davis complex is when $P$ is a polyhedron in $\mathbb{S}^3,\mathbb{E}^3$ or $\HHH$. Then the complex $X$ will be a piecewise cell complex.

We record a generalized version of Lemma~\ref{lem:hyp}. 

\begin{lemma}\label{lem:hypgen}
Let $Y$ be a simplicial complex and $\mathbf{m}$ be a labeling of the edges of $Y$.
We let $W = W(Y,\mathbf{m})$.
\begin{enumerate}[(1)]
\item (\cite[p. 123]{Gromov1987}) 
$W$ is word-hyperbolic if and only if $\Z\times\Z$ does not embed into $W$.
\item (\cite[Lemma 8.7.2]{Davisbook})
$W$ is one-ended if and only if $W$ is infinite and there does not exist a complete subgraph $K$ of $Y^{(1)}$ 
such that $K$ induces a finite Coxeter group and $Y\setminus K$ is disconnected.
\item
Suppose $Y$ is an abstract triangulation of $\mathbb{S}^2$.
Then $W$ is one-ended if and only if every 3-cycle in $Y^{(1)}$ either bounds a face or induces an infinite Coxeter group. 
In this case, $X$ is contractible.
\end{enumerate}
\end{lemma}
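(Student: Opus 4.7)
The plan is to derive part (3) from part (2) by analyzing which complete subgraphs of $Y^{(1)}$ can separate an abstract triangulation $Y$ of $\SS$. Since $Y$ is a two-dimensional simplicial complex, every clique in $Y^{(1)}$ has at most three vertices (ignoring the degenerate case $Y=K_4$). First, I would rule out singletons and edges: the link of any vertex of $Y$ is a circle because $Y\approx\SS$, so removing one vertex leaves a connected complex; and each edge of $Y$ is the intersection of exactly two faces, so deleting a single edge does not disconnect $Y$. Hence the only complete subgraphs that can separate $Y$ into components each containing a vertex are triangles (that is, $3$-cycles) in $Y^{(1)}$.

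Next I would analyze triangles. By the Jordan curve theorem applied in $Y\approx\SS$, a $3$-cycle $C$ in $Y^{(1)}$ bounds two closed disks in $\SS$, each triangulated as a subcomplex of $Y$. Both disks contain an interior vertex---equivalently, both components of $Y\setminus C$ contain a vertex---exactly when $C$ does not bound a face of $Y$. On the algebraic side, a triangle with edge labels $p,q,r$ induces the Coxeter subgroup $W_{p,q,r}$, which is finite iff $1/p+1/q+1/r>1$. Combining these observations with part (2), $W$ is one-ended iff $W$ is infinite and no triangle both fails to bound a face and induces a finite Coxeter subgroup, which is precisely the condition stated in part (3).

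For the contractibility claim I would appeal to Davis's theory of Coxeter groups: the standard Davis complex $\Sigma$ of any Coxeter system is contractible (indeed $\CAT(0)$ under the Moussong metric, \cite{Davisbook}). Under the hypothesis of part (3), together with the running assumption of Definition~\ref{defn:dualpqr} that each face of $L$ induces a finite Coxeter subgroup, the simplicial complex $L$ is precisely the nerve of $(W,L^{(0)})$: a triple of generators spans a simplex of the nerve iff its induced subgroup is finite, iff (by the hypothesis of (3)) it spans a face of $L$. When $L$ equals the nerve, the dual Davis complex $X$ from Definition~\ref{defn:dualpqr} coincides with Davis's standard complex $\Sigma$, so $X$ is contractible.

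I expect the main technical step to be the identification between our dual Davis complex $X$ (built by gluing copies of an abstract $3$-ball $P$ dual to $L$ along faces according to the Cayley graph of $W$) and Davis's abstract complex $\Sigma$: one must verify that when $L$ is the nerve, the cellulation of $P$ matches the geometric realization of the poset of spherical subsets that constitutes Davis's fundamental chamber. Once that combinatorial identification is made, contractibility follows formally.
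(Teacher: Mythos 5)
The paper itself offers essentially no written proof of this lemma: parts (1) and (2) are cited, and part (3) is treated as an immediate consequence of (2) together with the remark, placed right after the statement, that under the hypotheses of (3) the dual Davis complex $X$ is homeomorphic and combinatorially dual to the Davis complex $\Sigma_W$, which is $\CAT(0)$ and hence contractible. Your proposal is exactly this route written out in detail, so in outline it matches the intended argument, including the identification of $X$ with $\Sigma_W$ via the observation that under hypothesis (3) the complex $L$ is the nerve of the Coxeter system.

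There is, however, one false step in the combinatorial reduction. You claim that since $Y$ is two-dimensional, every clique in $Y^{(1)}$ has at most three vertices apart from the degenerate case $Y=K_4$. That is not true: the one-skeleton of a triangulation of $\SS$ can contain a $K_4$ without $Y$ being the boundary of the tetrahedron --- the triangular bipyramid (the suspension of a $3$-cycle) already contains four-vertex cliques, namely a pole together with the equatorial triangle. (Planarity of $Y^{(1)}$ does exclude $K_5$, so cliques have at most four vertices.) Your reduction therefore silently omits the case of a separating $4$-clique $K$ with $W_K$ finite, which is precisely the kind of subgraph part (2) asks about. The omission is repairable: if $W_K$ is finite then each of the four $3$-cycles inside $K$ induces a finite special subgroup, and if all four bounded faces of $Y$ then those faces would already form a $2$-sphere, forcing $Y=\partial\Delta^3$ and $Y\setminus K=\varnothing$; otherwise some $3$-cycle of $K$ fails to bound a face while inducing a finite subgroup, which is exactly the configuration your triangle analysis detects. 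With this case added (and noting that a triangulation of $\SS$ with at least five vertices automatically has $W$ infinite, since $Y^{(1)}$ is planar and hence not complete), your derivation of (3) from (2) goes through; the contractibility claim is then the same appeal to Davis's theory that the paper makes.
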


Under the hypotheses of (3), the dual Davis complex $X$ is homeomorphic, and combinatorially dual, to the \emph{Davis complex} $\Sigma_W$ of $W$~\cite{Davisbook}, which is $\CAT(0)$. 
\section{Hyperbolic duality}\label{s:dual}
In this section we investigate \emph{hyperbolic duality} in $\HHH$ in an analogous manner to Lemma~\ref{lem:euc}.
As some of the proofs are technical in nature,
the reader may skip to the statement of Lemma~\ref{lem:fat cat 22p}
at the first reading and continue to Section~\ref{s:subordinate}.

\subsection{Hyperbolic quadrilaterals}\label{s:quadrangle}
We first need to describe the hyperbolic quadrilaterals with two opposite right-angles.
For Lemmas~\ref{lem:tri} and \ref{lem:quad}, we will consider the Poincar\'e model $\mathbb{D}$
and the Euclidean metric on $\mathbb{D}\subseteq\EE$. Let $O$ be the origin of $\mathbb{D}$.

\begin{lemma}\label{lem:tri}
Let $0<a<\pi/2$ and $A,B\in\partial \mathbb{D}$ such that $\angle AOB=a$.
We denote by $H$ the foot of the hyperbolic perpendicular from $A$ onto $\overrightarrow{OB}$.
Then $2/ (OH+1/OH) = \cos a$.
\end{lemma}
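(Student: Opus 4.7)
The plan is a direct coordinate computation in the Poincar\'e disc, exploiting the fact that geodesics perpendicular to a diameter admit a very clean Euclidean description. After rotating, I would place $B=(1,0)$, so $A=(\cos a,\sin a)$, and the geodesic $\overrightarrow{OB}$ is simply the segment of the positive $x$-axis.

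First I would identify the hyperbolic geodesic $\gamma$ through $A$ meeting $\overrightarrow{OB}$ perpendicularly. The reflection in the Euclidean line through $OB$ is a hyperbolic isometry, so it must fix $\gamma$ setwise; hence the second endpoint of $\gamma$ on $\partial\mathbb D$ is $A'=(\cos a,-\sin a)$, and $\gamma$ is a Euclidean circular arc whose center lies on the $x$-axis. Writing the center as $(c,0)$ with Euclidean radius $r$, the orthogonality of $\gamma$ and $\partial \mathbb D$ gives $c^2=1+r^2$, while passing through $A$ gives $(c-\cos a)^2+\sin^2 a=r^2$. Subtracting yields $2c\cos a=2$, whence $c=\sec a$ and $r=\tan a$.

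The foot $H$ is the (inner) intersection of this circle with the $x$-axis, so $H=(\sec a-\tan a,\,0)$ and $OH=\sec a-\tan a$. Since $(\sec a-\tan a)(\sec a+\tan a)=\sec^2 a-\tan^2 a=1$, we get $1/OH=\sec a+\tan a$, hence
\[
OH+\frac{1}{OH}=2\sec a,
\]
which rearranges to the claimed identity $2/(OH+1/OH)=\cos a$.

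There is essentially no obstacle beyond choosing coordinates and carrying out the orthogonality computation; the only thing worth flagging is the justification that a hyperbolic geodesic from the ideal point $A$ meeting the diameter $\overrightarrow{OB}$ orthogonally is characterized by symmetry across $OB$, so that its Euclidean center automatically sits on the $x$-axis. Once that is in place the rest is algebra.
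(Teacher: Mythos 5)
Your computation is correct and follows essentially the same route as the paper: both arguments locate the Euclidean center of the circular arc representing the perpendicular geodesic at distance $\sec a$ from $O$ along $OB$ with radius $\tan a$, giving $OH=\sec a-\tan a$ and hence the identity. The paper does this synthetically (the center is the intersection of line $OB$ with the tangent to $\partial\mathbb{D}$ at $A$), while you derive the same data from the orthogonality equation $c^2=1+r^2$ in coordinates; the difference is only presentational.
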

\begin{proof}
Let $O'$ denote the center of the circular arc $AH$; see Figure~\ref{fig:quad} (a).
Then 
$
OH = OO' - O'H = OO' - O'A = \sec a - \tan a$.
This implies the conclusion.
\end{proof}

\begin{figure}[htb]
\begin{center}
\subfloat[(a)]{
\begin{tikzpicture}
    \colorlet{darkbrown}{red!30!brown!70!black}    
\draw [thick] (0,0) node [left=2pt,above=2pt] {\small $O$} node (A) {} circle (1.6);
\draw [red,dashed,thick] (2,0) node (B) {} circle (1.2);
\draw (-1.28,-.96) -- (1.28,.96) node (C) {};
\draw (-1.6,0) -- (2,0);
\draw (A) node [circle,fill=blue,inner sep=2pt] {};
\draw (B) node [circle,fill=blue,inner sep=2pt] {} node [right=2pt] {\small $O'$};
\draw (C) node [circle,fill=blue,inner sep=2pt] {} node [above=5pt] {\small $A$};;
\draw (.8,0) node [circle,fill=blue,inner sep=2pt] {} node [left=6pt,below=3pt] {\small $H$};;
\draw (.5,.17) node [] {\small $a$};
\draw [darkbrown,ultra thick] (.37,0) arc (0:36.87:.37);
\end{tikzpicture}
}
$\qquad\qquad$
\subfloat[(b)]{
\begin{tikzpicture}[scale=.8]
    \colorlet{darkbrown}{red!30!brown!70!black}   
    \colorlet{darkgreen}{green!50!black}     
\draw [thick] (0,0) node [left=10pt,below=0pt] {\small $O$} node (O) {} circle (2);
\draw [red,dashed,thick] (2.5,0) node (OA) {} circle (1.5);
\draw (-1,-1.73) -- (1.1,1.9) node (OB) {};
\draw (-2,0) -- (2.5,0);
\draw (O) node [circle,fill=blue,inner sep=2pt] {};
\draw (OA) node [circle,fill=blue,inner sep=2pt] {} node [right=2pt] {\small $O_A$};
\draw (OB) node [circle,fill=blue,inner sep=2pt] {} node [right=4pt,above=1pt] {\small $O_B$};
\draw (1,0) node (A) {} node [circle,fill=blue,inner sep=2pt] {} node [left=6pt,below=3pt] {\small $A$};
\draw [darkgreen,dashed,thick] (OB) circle (1);
\draw (.5,.27) node [] {\small $c$};
\draw (3.7,-1.6) node {\small $C_A$};
\draw (2.4,2.5) node {\small $C_B$};
\draw (.85,.65) node {\small $\gamma$};
\draw (1.42,.6) node {\small $C$};
\draw (1.26,.88) node  [circle,fill=blue,inner sep=2pt] {};
\draw [darkbrown,ultra thick] (.37,0) arc (0:60:.37) ;
\draw (.598,1.035) node [circle,fill=blue,inner sep=2pt] {} node [left=2pt] {\small $B$};
\end{tikzpicture}
}
\end{center}
\caption{Lemmas~\ref{lem:tri} and~\ref{lem:quad}.}\label{fig:quad}
\end{figure}

Let us define $\xi(c,\gamma)\subseteq (0,1)^2$ to be the set of $(x,y)$ such that there exists a hyperbolic quadrilateral $OACB$ with $\angle O=c,\angle C=\gamma, \angle A= \angle B=\pi/2$ and $x = 2/(OA+1/OA), y=2/(OB+1/OB)$. See Figure~\ref{fig:quad} (b).
Recall our convention that $OA$ and $OB$ are the Euclidean lengths measured in $\mathbb{D}$.
Note that $\xi(c,\gamma)$ is non-empty only if $\gamma<\pi-c$.
Let $c\in(0,\pi)$ and $\gamma\in(0,\pi-c)$ and define $\sigma_{c,\gamma}: (0,1)^2 \rightarrow \mathbb{R}$ by 
\[
\sigma_{c,\gamma}(x,y)=\cos \gamma\sqrt{(1-x^2)(1-y^2)}-xy+\cos c.\]

\begin{lemma}\label{lem:quad}
If $c\in(0,\pi)$ and $\gamma\in(0,\pi-c)\cap (0,\pi/2]$,
then
\[\xi(c,\gamma) = \{(x,y)\;|\; \sigma_{c,\gamma}(x,y)=0\}\cap (0,1)^2.\]
\end{lemma}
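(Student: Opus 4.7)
The plan is to reparametrize by hyperbolic lengths, recast $\sigma_{c,\gamma}(x,y)=0$ as a clean trigonometric identity, and verify both inclusions of the lemma. First, convert the Euclidean parameters: with $O$ at the centre of $\mathbb{D}$ and $\alpha=|OA|_h$, the Euclidean distance $OA$ equals $\tanh(\alpha/2)$, and the half-angle identity $\tanh(\alpha/2)+\coth(\alpha/2)=2\coth\alpha$ yields $x=2/(OA+1/OA)=\tanh\alpha$, hence $\sqrt{1-x^2}=1/\cosh\alpha$; similarly $y=\tanh\beta$ for $\beta=|OB|_h$. Multiplying $\sigma_{c,\gamma}(x,y)=0$ through by $\cosh\alpha\cosh\beta$ recasts it as
\[
\sinh\alpha\sinh\beta\;=\;\cos c\,\cosh\alpha\cosh\beta+\cos\gamma.\qquad(\ast)
\]
So the lemma reduces to showing that a hyperbolic quadrilateral $OACB$ with $\angle O=c$, $\angle A=\angle B=\pi/2$, $\angle C=\gamma$, $|OA|_h=\alpha$, $|OB|_h=\beta$ exists if and only if $(\ast)$ holds.

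For the forward implication, I would draw the diagonal $OC$ of hyperbolic length $\rho$, splitting $OACB$ into right triangles $OAC$ and $OBC$ with right angles at $A$ and $B$. Write $\theta_1,\theta_2$ for the angles at $O$ (so $\theta_1+\theta_2=c$) and $\psi_1,\psi_2$ for those at $C$ (so $\psi_1+\psi_2=\gamma$), and set $\delta_A=|AC|_h$, $\delta_B=|BC|_h$. The standard hyperbolic right-triangle formulas express each of $\sin\theta_i,\cos\theta_i,\sin\psi_i,\cos\psi_i$ explicitly in terms of $\alpha,\beta,\delta_A,\delta_B,\rho$, together with the Pythagoras relation $\cosh\rho=\cosh\alpha\cosh\delta_A=\cosh\beta\cosh\delta_B$. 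Expanding $\cos\gamma=\cos(\psi_1+\psi_2)$ and $\cos c=\cos(\theta_1+\theta_2)$ by the angle-addition formula yields two linear equations in the unknowns $\sinh\alpha\sinh\beta$ and $\sinh\delta_A\sinh\delta_B$; solving them (and using $\cosh^2\rho=\cosh\alpha\cosh\beta\cosh\delta_A\cosh\delta_B$) collapses to $(\ast)$.

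For the reverse implication, given $(x,y)\in(0,1)^2$ with $\sigma_{c,\gamma}(x,y)=0$, set $\alpha=\tanh^{-1}x$ and $\beta=\tanh^{-1}y$, place $A,B$ in $\mathbb{D}$ at Euclidean distances $\tanh(\alpha/2),\tanh(\beta/2)$ from $O$ along two rays meeting at angle $c$, and erect the hyperbolic perpendiculars to $OA$ and $OB$ at $A$ and $B$. In the Poincar\'e disk these are Euclidean circles orthogonal to $\partial\mathbb{D}$, with centres along the respective rays at Euclidean distance $\coth\alpha,\coth\beta$ from $O$ and radii $1/\sinh\alpha,1/\sinh\beta$. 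Applying the Euclidean law of cosines to the segment between the two centres and simplifying, these circles intersect inside $\mathbb{D}$ precisely when $|\sinh\alpha\sinh\beta-\cos c\,\cosh\alpha\cosh\beta|<1$, and at the intersection point $C$ the hyperbolic angle $\gamma'$ satisfies $\cos\gamma'=\sinh\alpha\sinh\beta-\cos c\,\cosh\alpha\cosh\beta$. By $(\ast)$ this value equals $\cos\gamma$, which lies in $[0,1)$ because $\gamma\in(0,\pi/2]$; hence the circles meet and $\gamma'=\gamma$, giving the required quadrilateral.

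The main obstacle is the combinatorial bookkeeping in the forward step: one runs two parallel computations (one for each of $c$ and $\gamma$) and combines them via the hyperbolic Pythagoras identity in $\rho$. A smaller but genuine point in the reverse direction is the intersection criterion for the two perpendiculars; here the hypothesis $\gamma\le\pi/2$ does real work by forcing $\cos\gamma<1$, which is exactly what prevents the perpendiculars from being parallel or ultraparallel in $\mathbb{H}^2$.
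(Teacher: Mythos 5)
Your reparametrization $x=\tanh\alpha$, $\sqrt{1-x^2}=1/\cosh\alpha$ is correct, and your identity $(\ast)$, namely $\sinh\alpha\sinh\beta=\cos c\,\cosh\alpha\cosh\beta+\cos\gamma$, is indeed equivalent to $\sigma_{c,\gamma}(x,y)=0$. Your forward direction (splitting along the diagonal $OC$ into two right triangles, expanding $\cos(\theta_1+\theta_2)$ and $\cos(\psi_1+\psi_2)$, and eliminating $\sinh\delta_A\sinh\delta_B$ via the hyperbolic Pythagorean relation) does go through, and it is a genuinely different, intrinsic route: the paper instead applies the Euclidean law of cosines to the triangles $OO_AO_B$ and $O_ACO_B$ formed by the Euclidean centres of the arcs $AC$ and $BC$ in the Poincar\'e model and eliminates $O_AO_B$. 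Your reverse direction is essentially that same circle-centre computation run backwards, and your intersection criterion $|\sinh\alpha\sinh\beta-\cos c\,\cosh\alpha\cosh\beta|<1$ is right.

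The gap is at the last step of the reverse direction. Knowing that the two perpendiculars meet at a point $C$ inside $\mathbb{D}$ does not yet give ``the required quadrilateral'': two geodesics crossing at $C$ make a pair of supplementary angles $\{\gamma,\pi-\gamma\}$, and you must check both that the four segments $OA$, $AC$, $CB$, $BO$ bound a simple quadrilateral and that the \emph{interior} angle at $C$ is $\gamma$ rather than $\pi-\gamma$. This is not automatic; it can fail if, for instance, the perpendicular erected at $A$ crosses the ray $\overrightarrow{OB}$. The paper rules this out (its Claim~1) by showing $x>\cos c$ and $y>\cos c$, which by Lemma~\ref{lem:tri} guarantees that neither perpendicular meets the opposite ray, and this is exactly where the hypothesis $\gamma\le\pi/2$ does its work: $\cos\gamma\ge0$ forces $xy=\cos\gamma\sqrt{(1-x^2)(1-y^2)}+\cos c\ge\cos c$. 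Your stated role for that hypothesis --- that $\gamma\le\pi/2$ forces $\cos\gamma<1$ and so prevents the perpendiculars from being parallel or ultraparallel --- is not right, since $\cos\gamma<1$ already follows from $\gamma>0$; the hypothesis is needed for the configuration argument you omitted, not for the intersection criterion. Supplying the analogue of Claim~1 (and then observing that the quadrilateral's interior lies in the exteriors of both circles, so the interior angle at $C$ is $\pi-\angle O_ACO_B=\gamma$) would complete your proof.
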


\begin{proof}
Suppose $(x,y)\in\xi(c,\gamma)$ and $OACB$ is a hyperbolic quadrilateral in $\HH=\mathbb{D}$ such that $\angle O=c,\angle C=\gamma, \angle A=\angle B=\pi/2$, $x = 2 / (OA + 1/ OA)$ and $y= 2 / (OB + 1/OB)$.
Let $O_A$ be the center of the (Euclidean) circle $C_A$ containing the arc $AC$, and $O_B$ be the center of $C_B$ containing $BC$. From ${OO_A}^2= 1 + (OO_A - OA)^2$, we have $O O_A = 1/x$ and $O_A A = \sqrt{1/x^2-1}$.
One can similarly compute $O O_B$ and $O_B B$.
By the Law of Cosines, we have that ${O O_A}^2 + {O O_B}^2 - 2 O O_A\cdot O O_B \cos c = {O_A O_B}^2$ and also that ${O_A C}^2 + {O_B C}^2 - 2 O_A C\cdot O_B C\cos(\pi- \gamma)
={O_A O_B}^2 $.
Eliminating $O_A O_B$, we see that $\sigma_{c,\gamma}(x,y)=0$. 

For the converse suppose $\sigma_{c,\gamma}(x,y)=0$ for some $x,y\in(0,1)$.
As above, we let $A$ be a point so that $x = 2/(OA + 1/OA)$ and $C_A$ be the circle perpendicular to $\partial\mathbb{D}$ so that the center $O_A$ of $C_A$ is on $\overrightarrow{OA}$.
We let $B$ be such that $\angle AOB = c$ and $y = 2/(OB+1/OB)$, and similarly define $C_B$ and $O_B$.

\emph{Claim 1.} $C_A$ does not intersect $\overrightarrow{OO_B}$,
and 
$C_B$ does not intersect $\overrightarrow{OO_A}$.

By Lemma~\ref{lem:tri}, we have only to show that 
$\arccos x<c$ and $\arccos y <c$. To prove this, we simply note that
$0 = \sigma_{c,\gamma}(x,y) \ge -xy + \cos c > -x + \cos c$ and similarly, $0>-y+\cos c$.

\emph{Claim 2.} $C_A$ and $C_B$ intersect.

Again by Lemma~\ref{lem:tri}, we only need to prove that
$\arccos x+\arccos y\ge c$.
We obtain this by noting that
\[
\cos(\arccos x+\arccos y) = xy - \sqrt{(1-x^2)(1-y^2)} \le xy - \cos\gamma\sqrt{(1-x^2)(1-y^2)} =\cos c.\]

By Claim 1 and 2, we see that $x$ and $y$ determine a non-degenerate hyperbolic quadrilateral $OACB$ as shown in Figure~\ref{fig:quad} (b). In order to show the quadrilateral has the prescribed angle $\gamma$, note that 
\begin{align*}
&{O_A C}^2 + {O_B C}^2 - O_A O_B^2={O_A C}^2 + {O_B C}^2 - (OO_A^2+OO_B^2-2OO_A\cdot
OO_B\cos c)\\
&= \frac1{x^2}+\frac1{y^2}-2  - \left(\frac1{x^2}+ \frac1{y^2}\right)+\frac2{xy} \cos c
=-\frac2{xy}( xy-\cos c)\\
&=-\frac2{xy}\left(\cos\gamma\sqrt{(1-x^2)(1-y^2)}\right)
=2 O_A C\cdot O_B C\cos (\pi-\gamma).
\end{align*}
We remark that the above proof is valid without an assumption that $c$ is acute.\end{proof}

Let $c\in(0,\pi)$ and $\gamma\in(0,\pi-c)\cap (0,\pi/2]$.
The contour curve $\sigma_{c,\gamma}(x,y)=0$ is a smooth and simple arc properly embedded in $[0,1]^2$.
One way of seeing this is using the substitution $u=\cos(\arccos x+\arccos y),v=\cos(\arccos x-\arccos y)$
so that 
\[\sigma_{c,\gamma}(x,y)=0\Leftrightarrow \frac{-1-\cos \gamma}2 u + \frac{-1+\cos \gamma}2  v + \cos c=0.\]
The two cases depending on whether $c\le \pi/2$ or $c\ge\pi/2$ are drawn in Figure~\ref{fig:sigma}.
With respect to $x$, the $y$--coordinates on the curve $\sigma_{c,\gamma}=0$ are strictly decreasing.
This is because
\[
\frac{dy}{dx} = - \frac{\partial \sigma_{c,\gamma}/\partial x}{\partial \sigma_{c,\gamma}/\partial y}
= -\frac{y + x\sqrt{1-y^2}\cos\gamma / \sqrt{1-x^2}}{x +y\sqrt{1-x^2}\cos\gamma / \sqrt{1-y^2}}.\]

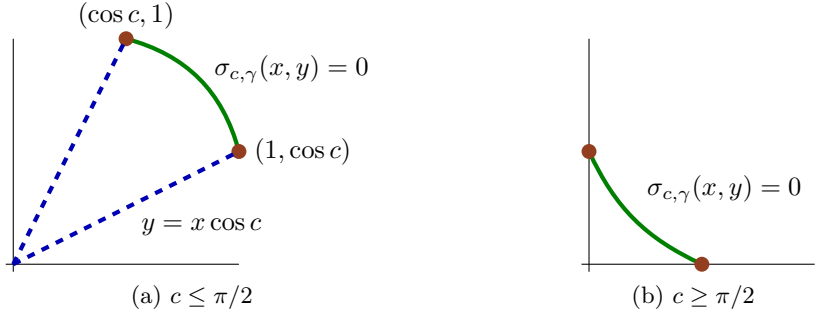
\begin{figure}[htb]
\begin{center}
\subfloat[(a) $c\le\pi/2$]
{  
\begin{tikzpicture}
    \colorlet{darkgreen}{green!50!black}
    \colorlet{darkbrown}{red!30!brown!70!black}    
    \colorlet{darkblue}{blue!70!black}
\draw (-.1,0)--(3,0);
\draw (0,-0.1)--(0,3);
\draw (1.5,3) edge [ultra thick, bend left=30,color=darkgreen] (3,1.5);
\draw (1.5,3) node [circle,inner sep=2pt, fill={darkbrown}] {};
\draw (3,1.5) node [circle,inner sep=2pt, fill={darkbrown}] {};
\draw (3.7,2.6) node [] {\small $\sigma_{c,\gamma}(x,y)=0$};
\draw (3,1.5) node [right=2pt] {\small $(1,\cos c)$};
\draw (1.5,3) node [above] {\small $(\cos c,1)$};
\draw [ultra thick,dashed,color=darkblue] (0,0) -- (3,1.5);
\draw [ultra thick,dashed,color=darkblue] (0,0) -- (1.5,3);
\draw (2.5,.5) node [] {\small $y=x\cos c$};
\end{tikzpicture}
}
$\qquad\qquad\qquad$
\subfloat[(b) $c\ge\pi/2$]
{
\begin{tikzpicture}
    \colorlet{darkgreen}{green!50!black}
    \colorlet{lightgreen}{green!80!black}
    \colorlet{darkbrown}{red!30!brown!70!black}    
    \colorlet{darkred}{red!50!black}
    \colorlet{lightred}{red!80!black}
\draw (-.1,0)--(3,0);
\draw (0,-0.1)--(0,3);
\draw (1.5,0) edge [ultra thick, bend left=20,color=darkgreen] (0,1.5);
\draw (1.5,0) node [circle,inner sep=2pt, fill={darkbrown}] {};
\draw (0,1.5) node [circle,inner sep=2pt, fill={darkbrown}] {};
\draw (1.8,1) node [] {\small $\sigma_{c,\gamma}(x,y)=0$};
\end{tikzpicture}}
\end{center}
\caption{Drawing $\sigma_{c,\gamma}(x,y)=0$.}\label{fig:sigma}.
\end{figure}

We remark that the convexity in Figure~\ref{fig:sigma} can change for different parameters $(c,\gamma)$.

\subsection{Slanted cubes revisited} \label{s:slanted}

%%%%%%%%%%%%%%%%%

Recall that the triangles in the opposite link--pair of a Euclidean slanted cube are polar dual to each other
(Lemma~\ref{lem:euc}).
Motivated by this, we define hyperbolic duality.

\begin{definition}\label{defn:hypcube}.
If two spherical triangles $R$ and $R'$ form the opposite link--pair of a slanted cube $P$ in $\HHH$, then we say that $R$ is \emph{hyperbolically dual} to $R'$ and write $P=Q(R,R')$.
\end{definition}

We have seen that hyperbolic duality is a symmetric relation (Lemma~\ref{lem:sym}).
Also, the all-right spherical triangle is hyperbolically dual to every acute spherical triangle
(Proposition~\ref{p:reflectioncube}). 
The uniqueness part of Theorem~\ref{thm:HR} implies that there exists at most one $Q(R,R')$ for a given pair of spherical triangles $R$ and $R'$.

\begin{example}
For $a\in(0,3\pi/5)$, the equilateral spherical triangle with its edge lengths $a$ is hyperbolically dual to the spherical triangle with dihedral angles $2\pi/5$.
To see this, take a regular dodecahedron $P$ in $\HHH$.
The link of a vertex in $P$ is an equilateral triangle, whose edge lengths can vary from $0$ to $3\pi/5$.
By cutting $P$ through planes passing through the center of mass of $P$ and also the midpoints of the edges of $P$, we have slanted cubes with opposite link--pair $(R,R')$ such that $R'$ is the link of a vertex in $P$ and $R=R_{2,3,5}$ is the triangle in the icosahedral tessellation of $\mathbb{S}^2$.
\end{example}

\begin{remark}\label{rem:label}
Suppose we have two spherical triangles that carry labeling of the vertices $R=ABC$ and $R'=A'B'C'$.
When we say $R$ is (polar or hyperbolically) dual to $R'$,
we will tacitly interpret this in such a way that $A$ and $A'$ correspond to opposite edges (one containing $O$ and the other containing $O'$ in Figure~\ref{fig:sym}),
and similarly for ($B,B')$ and $(C,C')$. In other words, hyperbolic duality is concerned about not only the isometry types but also the labeling of the vertices of the triangles (or equivalently a combinatorial isomorphism between $R$ and $R'$).
In particular when we say $R=ABC$ is hyperbolically dual to $R_{p,q,r}$, 
we consider the combinatorial isomorphism from $R$ to $R_{p,q,r}$ mapping $A,B$ and $C$ to the vertices of dihedral angles $\pi/p,\pi/q$ and $\pi/r$, respectively.
\end{remark}

Let $R$ be a spherical triangle and $C\subseteq \HHH$ be an open cone with the link $R$ at the vertex $O$ of $C$.
Each slanted cube where $R$ is the link of a distinguished vertex corresponds to a point $O'\in C$.
Conversely, for each $O'\in C$ let us consider the open cone $C'$ at $O'$ formed by the perpendiculars from $O'$ to the three faces of $C$. If the angles of $R$ are all non-obtuse, these three perpendiculars lie in $C$
and the intersection of $C'$ and $C$ is a slanted cube. So we have the following lemma.
The uniqueness of $O'$ (up to the symmetry of $R$) follows from that of $Q(R,R')$ where $R'=\link(O')$. Thus we have the following. 

\begin{lemma}\label{lem:cone}
Let $R$ be a spherical triangle whose dihedral angles are non-obtuse.
Then the space of spherical triangles hyperbolically dual to $R$ is parametrized by a point in the open cone whose link is $R$.\qed
\end{lemma}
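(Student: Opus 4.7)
The plan is to construct a bijection between the points of the open cone $C$ and the spherical triangles $R'$ hyperbolically dual to $R$. The forward direction sends $O'\in C$ to a slanted cube built as follows: drop hyperbolic perpendiculars from $O'$ onto each of the three face-planes supporting $C$, obtaining feet $X,Y,Z$, and intersect $C$ with the open cone $C'$ at $O'$ spanned by $O'X$, $O'Y$, $O'Z$. When $C\cap C'$ is combinatorially a cube with $R$ as the link at $O$, it is a slanted cube, and its opposite link is the desired $R'$.

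The main obstacle is confirming that this construction is well-defined, and this is precisely where the non-obtuse hypothesis is used. Concretely, one needs to show: (i) each foot $X,Y,Z$ lies on the actual face of $C$ rather than on its extension past an edge of $C$, and (ii) the hexahedron $C\cap C'$ is combinatorially a cube. For (i), consider an edge $e$ of $C$ with dihedral angle $\alpha\le\pi/2$ and a face $F$ adjacent to $e$. A convexity argument in $\HHH$, similar in flavor to the quadrilateral analysis of Section~\ref{s:quadrangle}, shows that whenever $\alpha\le\pi/2$ and $O'\in C$, the foot of the perpendicular from $O'$ to the plane of $F$ lies in the closed half-plane bounded by $e$ containing $F$. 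Applying this to all three edges of $C$ places $X,Y,Z$ on the closed faces of $C$; item (ii) is then a direct combinatorial count of faces, edges, and vertices of $C\cap C'$, and perpendicularity at each edge incident to $O$ or $O'$ (guaranteed by Lemma~\ref{lem:sym}) shows that $C\cap C'$ is indeed a slanted cube with $R$ as the link at $O$.

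With well-definedness established, injectivity of the map $O'\mapsto R'$ follows from the uniqueness part of Theorem~\ref{thm:HR}: two points $O'_1, O'_2 \in C$ producing the same $R'$ would yield two realizations of $Q(R,R')$ sharing the distinguished vertex $O$ and the cone $C$, forcing the cubes to coincide and hence $O'_1 = O'_2$. Surjectivity is immediate from the definition of hyperbolic duality, since any $R'$ hyperbolically dual to $R$ arises from some $Q(R,R')$ whose distinguished vertex can be placed at the apex $O$ of $C$. The minor ambiguity noted in the statement (``up to the symmetry of $R$'') reflects the fact that a combinatorial self-isomorphism of $R$ permits relabeling the three faces of $C$, inducing an action of the symmetry group of $R$ on the parameter space.
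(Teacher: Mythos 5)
Your proposal is correct and follows essentially the same route as the paper: the paper also parametrizes dual triangles by the opposite vertex $O'$ in the cone $C$, constructs the slanted cube as the intersection of $C$ with the cone of perpendiculars from $O'$ (using non-obtuseness of the angles of $R$ to ensure the perpendiculars land inside $C$), and derives uniqueness of $O'$ from the uniqueness of $Q(R,R')$ supplied by Theorem~\ref{thm:HR}. Your write-up simply spells out the well-definedness and the bijectivity in slightly more detail than the paper's brief sketch.
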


When we talk about dihedral angles between two planes, we tacitly assume that each plane comes with  a transverse orientation.

\begin{lemma}\label{lem:coincidence}
Let $P_1,P_2$ and $P_3$ be three geodesic planes in $\HHH$ 
and $\alpha_i$ be the dihedral angle of $P_{i-1}$ and $P_{i+1}$ for $i=1,2,3$.
If $\alpha_1,\alpha_2,\alpha_3\in(0,\pi/2]$ and $\alpha_1+\alpha_2+\alpha_3>\pi$, then $P_1\cap P_2\cap P_3$ is non-empty.
\end{lemma}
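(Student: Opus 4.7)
The plan is to work in the upper half-space model of $\HHH$, normalized so that $\ell := P_1 \cap P_2$ is the vertical geodesic over the origin $O \in \partial \HHH$. Then $P_1$ and $P_2$ are vertical half-planes over two boundary lines $\ell_1', \ell_2'$ through $O$ meeting at Euclidean angle $\alpha_3$, and $P_3$ is either a third vertical half-plane over a boundary line $\ell_3'$, or a hemisphere over a Euclidean circle $C \subseteq \partial\HHH$. Because this model is conformal, hyperbolic dihedral angles between planes agree with the Euclidean intersection angles of their boundary traces. In either case, $P_1\cap P_2\cap P_3\neq\emptyset$ is equivalent to $P_3$ meeting $\ell$; that is, either $\ell_3'$ passes through $O$, or $O$ lies strictly inside the circle $C$.

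First I would handle the vertical half-plane case. The three boundary lines $\ell_1', \ell_2', \ell_3'$ have pairwise Euclidean angles $\alpha_3, \alpha_2, \alpha_1$. If they were not concurrent they would bound a Euclidean triangle whose three interior angles are chosen from $\{\alpha_i, \pi - \alpha_i\}$ and must sum to $\pi$. A short case-by-case check against $\alpha_i \leq \pi/2$ and $\alpha_1 + \alpha_2 + \alpha_3 > \pi$ rules out every possibility, so the three lines are concurrent at $O$.

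Next I would treat the hemisphere case. Writing $C$ as a circle of radius $r$ centered at $Z = (a, b)$, the two angle-of-intersection hypotheses become $|b| = r \cos \alpha_2$ and $|b \cos \alpha_3 - a \sin \alpha_3| = r \cos \alpha_1$. A direct computation of $a^2 + b^2 - r^2$ reduces the condition that $O$ lies strictly inside $C$ to
\[
\cos^2 \alpha_1 + \cos^2 \alpha_2 + \cos^2 \alpha_3 + 2 \cos \alpha_1 \cos \alpha_2 \cos \alpha_3 < 1,
\]
which rearranges to $(\cos\alpha_3 + \cos\alpha_1\cos\alpha_2)^2 < \sin^2\alpha_1 \sin^2\alpha_2$. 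Via the identities $\cos(\alpha_1 \pm \alpha_2) = \cos\alpha_1\cos\alpha_2 \mp \sin\alpha_1\sin\alpha_2$, this is in turn equivalent to the pair $\alpha_3 > \pi - \alpha_1 - \alpha_2$ and $\alpha_3 < \pi - |\alpha_1 - \alpha_2|$. The first is precisely the hypothesis $\sum \alpha_i > \pi$; the second follows from each $\alpha_i \leq \pi/2$, since then $\alpha_1 + \alpha_3 \leq \pi$ and so $\alpha_1 + \alpha_3 - \alpha_2 < \pi$.

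The main obstacle will be the sign bookkeeping in the hemisphere case: the absolute values in the equations for $Z$ correspond to a choice of side on which the center of $C$ lies relative to each $\ell_i'$, and a priori one could end up with either $+2\cos\alpha_1\cos\alpha_2\cos\alpha_3$ or $-2\cos\alpha_1\cos\alpha_2\cos\alpha_3$ on the left of the key inequality. Since every $\cos\alpha_i \geq 0$ under $\alpha_i \in (0,\pi/2]$, the ``$+$'' form is the stronger of the two, so establishing it handles both sign cases uniformly. This is consistent with the standard trichotomy (cf.\ Andreev's condition~(2) in Theorem~\ref{thm:andreev}) that three pairwise-intersecting planes in $\HHH$ share a point, an ideal point, or a common perpendicular plane according to whether $\sum\alpha_i>\pi$, $=\pi$, or $<\pi$.
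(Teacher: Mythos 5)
Your proof is correct, but it takes a genuinely different route from the paper's. The paper argues by contradiction: assuming $P_1\cap P_2\cap P_3=\varnothing$, it normalizes $P_1,P_2$ to be vertical in the upper half-space model, observes that at most one $\alpha_i$ can then equal $\pi/2$, and uses transverse orientations to pin down which of the four sectors cut out by the two boundary lines contains the center of the boundary circle of $P_3$; the two lines and the circle then bound a curvilinear triangle whose angle sum is at most $\pi$, contradicting $\alpha_1+\alpha_2+\alpha_3>\pi$. You instead prove the statement directly by coordinates: after the same normalization, the condition that the vertical geodesic $P_1\cap P_2$ meets the hemisphere $P_3$ in $\HHH$ becomes the Gram-determinant--type inequality
$\cos^2\alpha_1+\cos^2\alpha_2+\cos^2\alpha_3+2\cos\alpha_1\cos\alpha_2\cos\alpha_3<1$,
which you correctly unwind (via $(\cos\alpha_3+\cos\alpha_1\cos\alpha_2)^2<\sin^2\alpha_1\sin^2\alpha_2$) into the pair $\pi-\alpha_1-\alpha_2<\alpha_3<\pi-|\alpha_1-\alpha_2|$; the first is exactly the hypothesis and the second follows from $\alpha_i\le\pi/2$. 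Your remark that the ``$+$'' sign is the worst of the two possible signs, because all $\cos\alpha_i\ge0$, is the right way to dispose of the orientation ambiguity and plays the role of the paper's sector analysis; your separate treatment of the case where $P_3$ is also vertical covers a configuration the paper handles only implicitly. What your approach buys is a self-contained, line-by-line checkable computation that does not lean on a figure and in effect rederives the standard positive-definiteness criterion for three planes to meet at a point; what the paper's buys is brevity and a geometric picture of why the angle-sum threshold $\pi$ is the correct one. Both arguments are sound.
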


\begin{proof}
Let us assume that  $P_1\cap P_2\cap P_3=\varnothing$.
Since $P_1,P_2,P_3$ are pairwise intersecting, at most one of $\alpha_1,\alpha_2,\alpha_3$ are $\pi/2$. So we may let $\alpha_1,\alpha_2\ne\pi/2$.
In the upper-halfspace model, we choose $P_1$ and $P_2$ to be vertical.
Let $\EE = \partial\HHH\setminus\infty$ and $Q=P_1\cap P_2\cap\EE$.
We set $C=P_3\cap\EE$ so that $Q$ is not enclosed by $C$.
We denote by $O_C$ the center of $C$.
The lines $P_1\cap\EE$ and $P_2\cap\EE$ divide $\EE$ into four open sectors.
We will first assume that the transverse orientation of $P_3$ is pointing inward of $C$.
Since $\alpha_1,\alpha_2<\pi/2$, we have that $O_C$ belongs to the open sector $S$ where the transverse orientations of $P_1$ and $P_2$ are pointing outward;
see Figure~\ref{fig:int}.
The transversely oriented lines and circles $P_1,P_2,P_3$ determine a curved triangle (shaded in the figure) in $S$ whose angles are no greater than those of a Euclidean triangle. 
Since $O_C\in S$, we see that the angles of this curved triangle are $\alpha_1,\alpha_2$ and $\alpha_3$.
  This is a contradiction to $\alpha_1+\alpha_2+\alpha_3>\pi$.
  
The case when the transverse orientation of $P_3$ points outward of $C$ is very similar.
In this case, $O_C$ must belong to the open sector $S'$ where
the transverse orientations of $P_1$ and $P_2$ are both inward. 
We again obtain a contradiction by finding a negatively curved triangle with angles $\alpha_1,\alpha_2$ and $\alpha_3$.
\end{proof}

\begin{figure}[htb]
\begin{center}
\includegraphics[height=.14\textwidth]{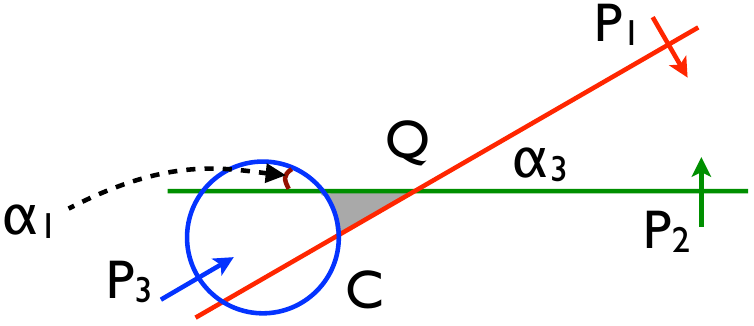}
\end{center}
\caption{Lemma~\ref{lem:coincidence}.}\label{fig:int}
\end{figure}

Let $R=ABC$ and $R'=A'B'C'$  be spherical triangles. 
We say $R$ is \emph{fatter} than $R'$ if the edge-lengths and the angles of $R$
are greater than the corresponding ones of $T'$; that is, when $A>A',BC>B'C'$ \emph{etc}.
In this case, we also say $R'$ is \emph{slimmer} than $R$.
This definition also depends on the labeling (or equivalently, a combinatorial isomorphism) of the triangles, as in the case of the hyperbolic duality.
Often, we will not mention the labeling if the underlying labeling is clear from the context.
The proof of the following is very similar to the forward direction of Lemma~\ref{lem:euc}.
\begin{lemma}\label{lem:fatgen}
Let $p,q,r\ge2$.
If a spherical triangle $R$ is hyperbolically dual to $R_{p,q,r}$ then $R$ is slimmer than the polar dual of $R_{p,q,r}$.
\end{lemma}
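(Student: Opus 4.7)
The plan is to mimic the forward direction of Lemma~\ref{lem:euc} in $\HHH$, replacing the Euclidean equality (quadrilateral angle sum $=2\pi$) by the hyperbolic strict inequality ($<2\pi$). Concretely, I set up the slanted cube $Q = Q(R,R_{p,q,r})$ with $R = \link(O)$ and $R_{p,q,r} = \link(O')$, using the labels of Figure~\ref{fig:sym}. The claim "$R$ is slimmer than the polar dual of $R_{p,q,r}$" unpacks into six inequalities: three saying that each angle of $R$ is strictly less than $\pi$ minus the corresponding edge length of $R_{p,q,r}$, and three saying the same with angles and edges interchanged. I will produce each one from a single face of $Q$ seen as a hyperbolic quadrilateral with two right angles.

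For the three angle inequalities I would use the three ``outer'' faces of $Q$, i.e.\ those containing $O'$ but not $O$, a representative being $\square XY'O'Z'$. Exactly as in the proof of Lemma~\ref{lem:euc}, the angle at $X$ is the dihedral angle of $Q$ along $OX$, which is the angle $A$ of $R$ at the vertex corresponding to $OX$, and the angle at $O'$ is $\angle Y'O'Z'$, which is the edge length $a'$ of $R_{p,q,r}$ opposite the vertex $O'X'$. Lemma~\ref{lem:sym} applied at $O'$ gives $O'Y'\perp XY'$ and $O'Z'\perp XZ'$, so the remaining two angles of the face are right. Since a hyperbolic quadrilateral has angle sum strictly below $2\pi$, I obtain $A + a' + \pi/2 + \pi/2 < 2\pi$, that is $A < \pi - a'$, which is precisely the angle of the polar dual $R_{p,q,r}^*$ at the corresponding vertex.

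For the three edge inequalities I would use instead the three ``inner'' faces of $Q$, i.e.\ those containing $O$ but not $O'$, a representative being $\square OYX'Z$. Here the angle at $O$ equals the edge length $a$ of $R$ opposite the vertex $OX$, the angle at $X'$ equals the dihedral angle of $Q$ along $O'X'$, i.e.\ the angle $A'$ of $R_{p,q,r}$ at the corresponding vertex, and by the symmetric use of Lemma~\ref{lem:sym} at $O$ the angles at $Y$ and $Z$ are right. The same hyperbolic angle sum gives $a < \pi - A'$, the edge length of $R_{p,q,r}^*$. Running this through all three pairs of opposite faces simultaneously yields all six inequalities, which is exactly the statement that $R$ is slimmer than $R_{p,q,r}^*$.

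The only real difficulty is bookkeeping: for each of the six faces one has to correctly pair its four vertices with vertices of $R$ (via $O$), vertices of $R_{p,q,r}$ (via $O'$), and two ``middle'' vertices where one must verify via Lemma~\ref{lem:sym} that the corresponding edges of $Q$ meet at a right angle. Once the labeling is set up (as in Figure~\ref{fig:sym}), the geometric content is a single line: replace ``$=\pi$'' in the Euclidean argument by ``$<\pi$''.
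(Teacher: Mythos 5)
Your proof is correct and is exactly the argument the paper intends: the paper only remarks that the proof is ``very similar to the forward direction of Lemma~\ref{lem:euc}'', and you carry out precisely that argument, reading each of the six inequalities off a face of the slanted cube viewed as a hyperbolic quadrilateral with two right angles (supplied by Lemma~\ref{lem:sym}) and angle sum strictly less than $2\pi$. No gaps; the labeling bookkeeping you describe matches Figure~\ref{fig:sym} and Remark~\ref{rem:label}.
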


The converse is also true for $(2,2,p)$-triangles.

\begin{lemma}\label{lem:fat}
Suppose $p\ge2$ and $R$ is a spherical triangle.
Then $R$ is hyperbolically dual to $R_{2,2,p}$ if and only if $R$ is slimmer than the polar dual of $R_{2,2,p}$.
\end{lemma}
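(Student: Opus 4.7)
The forward implication is Lemma~\ref{lem:fatgen}. For the converse, let $R=ABC$ be slimmer than the polar dual of $R_{2,2,p}$, so $A,B<\pi/2$, $C<(p-1)\pi/p$, $a,b<\pi/2$, and $c<(p-1)\pi/p$. The plan is to construct $Q(R,R_{2,2,p})$ by first prescribing the complete dihedral data of the intended slanted cube and then invoking an existence theorem for hyperbolic polyhedra.

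To fix the dihedrals: the perpendicularity in Definition~\ref{defn:hypcube} forces the link at each of the six ``middle'' vertices to be a spherical triangle with two right dihedrals and one ``corner'' dihedral from $\{A,B,C,\pi/p\}$, and the spherical law of cosines then pins down the third dihedral to be $\pi/2$. Hence the full dihedral data are $A,B,C$ at the three edges at $O$; $\pi/2,\pi/2,\pi/p$ at the three edges at $O'$; and $\pi/2$ at each of the six middle edges. When $C\le\pi/2$ these all lie in $(0,\pi/2]$ and Andreev's Theorem~\ref{thm:andreev} applies: condition~(1) reduces at $O$ to $A+B+C>\pi$ and elsewhere to $\pi$ plus a positive angle; condition~(2) is vacuous because three mutually intersecting faces of a combinatorial cube always share a vertex; condition~(4) is vacuous; and condition~(3) concerns the three prismatic 4-circuits around the coordinate axes, with dihedral sequences $(A,\pi/2,\pi/2,\pi/2)$, $(B,\pi/2,\pi/2,\pi/2)$, and $(C,\pi/2,\pi/p,\pi/2)$, none of which consists of four right angles because of the strict slim inequalities (using $A,B<\pi/2$ together with $\pi/p<\pi/2$ when $p\ge 3$, or $C<\pi/2$ when $p=2$).

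The main obstacle is that Andreev's theorem as stated requires dihedrals in $(0,\pi/2]$, while slimness allows $C\in(\pi/2,(p-1)\pi/p)$ when $p\ge3$. To cover that range I would instead apply the Rivin--Hodgson Theorem~\ref{thm:HR} to the prospective Gauss image $M$, a piecewise spherical sphere assembled from the polar duals of the eight link triangles, glued along arcs of length equal to the exterior dihedrals. A direct angle count gives the cone angles at the six cone points of $M$ (one per face of the would-be cube) as $(5\pi/2)-a$, $(5\pi/2)-b$, $3\pi-c-\pi/p$, $(5\pi/2)-A$, $(5\pi/2)-B$, and $3\pi-C-\pi/p$; each is $>2\pi$ exactly when $R$ is slim. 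The genuinely hard ingredient is then verifying that every closed geodesic in $M$ is longer than $2\pi$; I would establish this in the spirit of Moussong's lemma (cf.\ Remark~\ref{rem:metric flag}), exploiting that the many right dihedrals at middle edges together with the strict slim inequalities at the corners prevent any short loop in $M$ from closing up geodesically.

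Once the polyhedron $P$ is produced, its dihedrals coincide with the prescription, so by the spherical law of cosines its link at $O$ is $R$ and its link at $O'$ is $R_{2,2,p}$; the perpendicularity conditions of Definition~\ref{defn:hypcube} then hold automatically, and so $P=Q(R,R_{2,2,p})$, establishing the required hyperbolic duality.
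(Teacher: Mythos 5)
Your forward direction and your reduction of the converse to building the slanted cube $Q(R,R_{2,2,p})$ from prescribed dihedral data are fine, and the Andreev-based argument you sketch does work in the sub-case where all nine prescribed dihedral angles lie in $(0,\pi/2]$: the link of a trivalent vertex is the spherical triangle whose angles are the three dihedral angles there, the right angles at the six middle edges force the perpendicularity required by Definition~\ref{defn:hypcube} (Lemma~\ref{lem:sym} run backwards), and your check of Andreev's conditions on the combinatorial cube is correct. The gap is exactly where you locate it, and your proposed repair does not close it. Slimness only bounds the distinguished angle of $R$ by $\pi-\pi/p$, so for $p\ge3$ that angle may be obtuse, Andreev's theorem is unavailable, and you fall back on Rivin--Hodgson applied to the prospective Gauss image $M$. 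But condition (3) of Theorem~\ref{thm:HR} --- every closed geodesic of $M$ is longer than $2\pi$ --- is precisely the hard content of the lemma in this range, and you leave it as something you ``would establish in the spirit of Moussong's lemma.'' Moussong's lemma does not apply: the polar dual of $R$ has an edge of length $\pi-A<\pi/2$ when $A>\pi/2$, so $M$ is not a complex with all edges at least $\pi/2$, and in any case Moussong yields $\CAT(1)$ rather than the strict geodesic bound Theorem~\ref{thm:HR} demands. Lemma~\ref{lem:notsofat} should also give you pause: for $R_{2,3,5}$ the analogous ``slimmer than the polar dual'' hypothesis does \emph{not} imply hyperbolic duality, so any completion of your argument must genuinely use that two of the three labels equal $2$; a soft ``short loops cannot close up'' heuristic that never invokes this would prove too much.

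The paper avoids both obstacles by constructing the cube directly rather than by an existence theorem. It parametrizes each leg $OX,OY,OZ$ by a number in $(0,1)$ via Lemma~\ref{lem:tri}, characterizes the three quadrilateral faces at $O$ with two opposite right angles by the equations $\sigma_{c,\gamma}(x,y)=0$ of Lemma~\ref{lem:quad} (valid for obtuse $c$ as well), reduces existence to the system $\sigma_{a,\pi/p}(y,z)=\sigma_{b,\pi/2}(z,x)=\sigma_{c,\pi/2}(x,y)=0$, solves it by a monotonicity and intermediate-value argument on the contour curves --- the two right-angle equations collapse to $zx=\cos b$ and $xy=\cos c$, which is exactly where the hypothesis of two labels equal to $2$ enters --- and then closes up the cube with Lemma~\ref{lem:coincidence}. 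To salvage your route you would need either to prove the closed-geodesic estimate for $M$ by hand or to restrict to $p=2$, where your Andreev argument is complete and gives a clean alternative proof of Proposition~\ref{p:reflectioncube}.
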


\begin{proof}
We only need to prove the backward direction. 
Suppose $R=ABC$ is slimmer than the polar dual of $R_{p,2,2}$.
This means that $a,A<\pi-\pi/p$ and $B,C,b,c<\pi/2$. Assume for now that the following system of equations has a solution $x,y,z\in(0,1)$: 
\[\sigma_{a,\pi/p}(y,z)=\sigma_{b,\pi/2}(z,x)=\sigma_{c,\pi/2}(x,y)=0\qquad\qquad (*)\]
We consider the Poincar\'e ball model of $\HHH=\mathbb{D}^3\subseteq\mathbb{E}^3$
and let $O$ denote the origin.
Let us pick $X,Y,Z\in\HHH$ such that $x = 2/(OX+1/OX)$,
$y = 2/(OY+1/OY)$ and
$z = 2/(OZ+1/OZ)$ where
$OX, OY$ and $OZ$ are measured in the Euclidean metric of $\mathbb{D}^3$.
By Lemma~\ref{lem:quad}, we can further require that there exists a hyperbolic quadrilateral
$\square XOYZ'$ as in Figure~\ref{fig:sym} such that $\angle XOY = c, \angle XZ'Y =\angle OXZ'=\angle OYZ' = \pi/2$.
Similarly we assume that there exist hyperbolic quadrilaterals 
$\square YOZX'$ and $\square ZOXY'$ such that $\angle YOZ = a, \angle ZOX=b, \angle YX'Z =\pi/p$
and $\angle XY'Z = \angle OYX' = \angle OZX' = \angle OZY' =\angle OXY' = \pi/2$.
Let $\Pi_X$ be the hyperplane perpendicular to $OX$ at $X$,
and define $\Pi_Y$ and $\Pi_Z$ similarly with respect to $OY$ and $OZ$.
Lemma~\ref{lem:coincidence} implies that the intersection of $\Pi_X,\Pi_Y$ and $\Pi_Z$ is non-empty, say $O'$ and we have a hyperbolic slanted cube as in Figure~\ref{fig:sym}. 
Then the link of $O'$ is $R_{2,2,p}$ as desired.

%%%%
It remains to show that $(*)$ has a solution. Note that 
$\sigma_{b,\pi/2}(z,x)=0$ is equivalent to $zx = \cos b$.
Similarly,
$\sigma_{c,\pi/2}(x,y)=0$ reduces to $xy =\cos c$.
So we have only to show that the following system has a solution $y,z\in(0,1)$:
\[\sigma_{a,\pi/p}(y,z)=0,z=y\cos b/\cos c, y>\cos c \qquad\qquad (**)\]
Note that the inequality in (**) comes from the condition $x<1$.

\emph{Case 1.} $a\in(0,\pi/2]$.

The contour curve $\sigma_{a,\pi/p}(y,z)=0$ is a smooth simple arc properly embedded in $[0,1]^2$ joining $(\cos a,1)$ and $(1,\cos a)$; see Figure~\ref{fig:sigma} (a). 
Since $B,C<\pi/2$, the spherical law of cosines for $R$ shows that
$\cos b = \cos a \cos c + \sin a \sin c \cos B > \cos a \cos c$.
Similarly we have $\cos c>\cos a \cos b$.
So $\cos a < \cos b/\cos c < 1/\cos a$ and we see that the line $z = y\cos b / \cos c$ intersects the contour curve $\sigma_{a,\pi/p}(y,z)=0$ at a unique point $(y_0,z_0)$.
Note that 
\begin{eqnarray*}
\sigma_{a,\pi/p}(\cos c,\cos b) &=& \cos(\pi/p)\sin b\sin c - \cos b \cos c +\cos a \\
&=&  \cos(\pi/p)\sin b\sin c - \cos b \cos c +  \cos b \cos c + \cos A\sin b\sin c\\
&=& ( \cos(\pi/p)+\cos A)\sin b\sin c >0
\end{eqnarray*}
This shows that $y_0>\cos c$.

\emph{Case 2.} $a\in(\pi/2,\pi)$.

The contour curve $\sigma_{a,\pi/p}(y,z)=0$ is as in Figure~\ref{fig:sigma} (b). 
It is clear that this curve intersects with $z = y\cos b/\cos c$ at a unique point $(y_0,z_0)$.
We have $\sigma_{a,\pi/p}(\cos c,\cos b)>0$ as above and so, $y_0>\cos c$.
\end{proof}

We now have an alternative statement of Proposition \ref{p:reflectioncube}.

\begin{corollary}\label{cor:acutet}
The all-right spherical triangle is hyperbolically dual to every acute spherical triangle. In other words, the space of acute spherical triangles is parametrized by the points in the first open octant in $\HHH$.
\end{corollary}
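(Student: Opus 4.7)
The plan is to deduce this as a direct specialization of Lemma~\ref{lem:fat} with $p=2$, once one observes that $R_{2,2,2}$ is self-polar-dual. First I would record that the all-right spherical triangle $R_{2,2,2}$ (a closed octant of $\SS$) has all three dihedral angles and all three edge-lengths equal to $\pi/2$; plugging into the formulas of Section~\ref{s:polyhedron} shows that its polar dual has edge-lengths $\pi-\pi/2=\pi/2$ and dihedral angles $\pi-\pi/2=\pi/2$, so $R_{2,2,2}$ equals its own polar dual.

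Applying Lemma~\ref{lem:fat} with $p=2$ then says that a spherical triangle $R$ is hyperbolically dual to $R_{2,2,2}$ if and only if $R$ is slimmer than $R_{2,2,2}$, i.e.\ all three dihedral angles and all three edge-lengths of $R$ are strictly less than $\pi/2$. By Corollary~\ref{cor:acute}, any acute spherical triangle automatically has all edge-lengths acute, so the condition ``slimmer than $R_{2,2,2}$'' coincides with ``acute''. This proves the first assertion and, together with the uniqueness part of Theorem~\ref{thm:HR}, it identifies each acute $R'$ with a unique slanted cube $Q(R_{2,2,2},R')$.

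For the parametrization statement, I would invoke Lemma~\ref{lem:cone} with $R=R_{2,2,2}$. Its hypothesis (all dihedral angles non-obtuse) is trivially satisfied, and because $R_{2,2,2}$ has three right dihedral angles the open cone in $\HHH$ whose link at the cone-vertex is $R_{2,2,2}$ is exactly a trihedral region bounded by three mutually orthogonal geodesic half-planes, namely the first open octant. The lemma then states that the choice of the opposite vertex $O'$ in that octant is in bijection with the set of spherical triangles hyperbolically dual to $R_{2,2,2}$, which by the previous paragraph is the set of acute spherical triangles.

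There is no real obstacle: all the analytic work is already encapsulated in Lemmas~\ref{lem:fat} and~\ref{lem:cone}, and the corollary is essentially a bookkeeping exercise that leverages the self-duality of $R_{2,2,2}$ together with Corollary~\ref{cor:acute} to identify the two natural conditions (``slimmer than $R_{2,2,2}$'' and ``acute'').
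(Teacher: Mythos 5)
Your proposal is correct, and every step checks out: $R_{2,2,2}$ is indeed self--polar--dual, Lemma~\ref{lem:fat} with $p=2$ applies (its system $(*)$ degenerates to $yz=\cos a$, $zx=\cos b$, $xy=\cos c$, which is solvable in $(0,1)^3$ precisely under the ``slimmer'' hypothesis), Corollary~\ref{cor:acute} closes the gap between ``all angles acute'' and ``slimmer than $R_{2,2,2}$'', and Lemma~\ref{lem:cone} gives the octant parametrization. The route differs from the paper's in the first assertion: the paper presents Corollary~\ref{cor:acutet} as a direct rephrasing of Proposition~\ref{p:reflectioncube}, whose forward direction is a one-line application of Andreev's theorem (Theorem~\ref{thm:andreev}) producing the right-angled slanted cube with opposite link--pair $(R,R_{2,2,2})$; you instead reach the same cube through the analytic machinery of Section~\ref{s:dual} (the $\sigma_{c,\gamma}$ contour curves and Lemma~\ref{lem:coincidence}). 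Your approach has the merit of confirming that Lemma~\ref{lem:fat} really does specialize to the all-right case, but it is heavier than necessary and not fully independent of Proposition~\ref{p:reflectioncube}, since Corollary~\ref{cor:acute} is itself deduced from that proposition (though, as a side remark, the fact that acute spherical triangles have acute edges also follows directly from the dual spherical law of cosines, $\cos a=(\cos A+\cos B\cos C)/(\sin B\sin C)>0$). The parametrization argument via Lemma~\ref{lem:cone} is exactly the one the paper intends.
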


\begin{lemma}\label{lem:fat cat 22p}
If $p\ge2$
and $R$ is a spherical triangle fatter than $R_{2,2,p}$, then
%Denote by $T$ the $(2,2,p)$ spherical triangle,  and by $L$ the $(2,2,p)$ tessellation of $\SS$. Suppose $T$ is a spherical triangle fatter than $R_{2,2,p}$ with respect to a combinatorial isomorphism $\mu\co T\to T'$, and $L'$ be a spherical complex obtained by replacing $T$ in $L$ by $T'$ using $\mu$. Then $L'$ is strongly $\CAT(1)$.
the $(2,2,p)$-tessellation by $R$ is strongly $\CAT(1)$.
\end{lemma}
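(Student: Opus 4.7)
The plan is to construct a compact convex hyperbolic polyhedron $P$ whose Gauss image, in the sense of Theorem~\ref{thm:HR}, is isometric to the $(2,2,p)$-tessellation by $R$; the desired conclusion will then follow from the sufficiency direction of that theorem. This generalizes the argument of Lemma~\ref{lem:222}, where $P$ was a reflection cube.

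Since $R$ is fatter than $R_{2,2,p}$, polar duality reverses the relation, so $R^*$ is slimmer than the polar dual of $R_{2,2,p}$. Hence by Lemma~\ref{lem:fat} a slanted cube $Q = Q(R^*, R_{2,2,p})$ exists in $\HHH$. Label its distinguished opposite vertices so that $\link(O) = R^*$ and $\link(O') = R_{2,2,p}$. The three faces of $Q$ through $O'$ meet at $O'$ with dihedral angles $\pi/2, \pi/2, \pi/p$, so the hyperbolic reflections in them generate a finite spherical reflection group isomorphic to $W_{2,2,p}$ of order $4p$. I would then set $P$ to be the union of the $W_{2,2,p}$-orbit of $Q$.

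The core technical step will be to verify that $P$ is a compact convex hyperbolic polyhedron whose only vertices are the $4p$ copies of $O$ in its orbit. At $O'$ itself, the total solid angle is $4p\cdot\area(R_{2,2,p}) = 4p(\pi/p) = 4\pi$, so $O'$ lies in the interior of $P$. At each of the auxiliary vertices $X, Y, Z, X', Y', Z'$ of $Q$, the perpendicularity properties from Lemma~\ref{lem:sym}, together with the fact that the six ``middle'' edges of a slanted cube have dihedral angle $\pi/2$, should imply that the reflected copies of $Q$ align so that each such point becomes an interior point of an edge or face of $P$ rather than a genuine vertex. This local case-by-case analysis at the auxiliary vertices is the main obstacle.

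Once $P$ is established, each of its vertices has link $R^*$ by $W_{2,2,p}$-symmetry. Hence the Gauss image of $P$ will consist of $4p$ spherical triangles, each being the polar dual $(R^*)^* = R$ of a vertex link, identified along edges according to the induced $W_{2,2,p}$-action. This is precisely the $(2,2,p)$-tessellation by $R$, which is therefore strongly $\CAT(1)$ by Theorem~\ref{thm:HR}.
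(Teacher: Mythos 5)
Your proposal is correct and follows essentially the same route as the paper: take the polar dual of $R$, use Lemma~\ref{lem:fat} to obtain a slanted cube realizing its hyperbolic duality with $R_{2,2,p}$, assemble the $W_{2,2,p}$-orbit of that cube around the vertex whose link is $R_{2,2,p}$ into a compact hyperbolic polyhedron, and identify the Gauss image of that polyhedron with the $(2,2,p)$-tessellation by $R$ before invoking Theorem~\ref{thm:HR}. The local flatness at the auxiliary vertices that you flag as the main obstacle is likewise asserted without detailed verification in the paper's proof, and your count of $4p=|W_{2,2,p}|$ copies is the correct one (the paper writes $2p$).
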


\begin{proof}
Let $R'$ be the polar dual of $R$.
By Lemma~\ref{lem:fat}, there is a slanted hyperbolic cube $Q$ such that the opposite link-pair is $(R_{2,2,p},R')$. We use the labeling of the vertices as in Figure~\ref{fig:sym} where $\link(O)=R_{2,2,p}$ and $\link(O')=R'$. We glue $4p$ copies of $Q$ by identifying faces containing $O$ via the reflections in the reflection group $W_{2,2,p}$ so that the vertices labeled $O$ are all identified.  Note that half of the cubes $Q$ will be oriented differently from the other half.   This union of cubes will be a hyperbolic polyhedron $P_Q$ with vertices corresponding to the vertices of the cubes labeled $O'$. The Gauss image of $P_Q$ will be the $(2,2,p)$-tessellation by $R$.  
By Theorem \ref{thm:HR}, this tessellation is strongly $\CAT(1)$.  
\end{proof}

The situation is more complicated when the spherical triangle tessellating $\mathbb{S}^2$ has more than one non-right angle. 
\begin{lemma} \label{lem:notsofat} There is a spherical triangle $R$ slimmer than the polar dual of $R_{2,3,5}$ such that $R$ is not hyperbolically dual to $R_{2,3,5}$.
\end{lemma}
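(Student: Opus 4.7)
The plan is to take $R=R_{2,3,5}$ itself as the counterexample. I will check (i) $R$ is strictly slimmer than its polar dual $R_{2,3,5}^{*}$, but (ii) $R$ is not hyperbolically dual to $R_{2,3,5}$ under the canonical labeling fixed by Remark~\ref{rem:label}.

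For (i), the spherical law of cosines applied to the angles $\pi/2,\pi/3,\pi/5$ of $R_{2,3,5}$ gives edges $a,b,c\in(0,\pi/2)$ (for instance, $\cos a=\cos(\pi/3)\cos(\pi/5)/(\sin(\pi/3)\sin(\pi/5))>0$). The polar dual $R_{2,3,5}^{*}$ therefore has angles $\pi-a,\pi-b,\pi-c$, each strictly greater than $\pi/2$ and hence strictly greater than $\pi/2,\pi/3,\pi/5$ respectively, and edges $\pi/2,2\pi/3,4\pi/5$, each strictly greater than $a,b,c$. So $R_{2,3,5}$ is strictly slimmer than $R_{2,3,5}^{*}$.

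For (ii), I would argue by contradiction via Andreev's theorem. Suppose a slanted hyperbolic cube $Q$ had opposite link-pair $\{R_{2,3,5},R_{2,3,5}\}$. Then its dihedral angles would be forced: $\pi/2,\pi/3,\pi/5$ at the three edges through $O$ (being the angles of $\link(O)=R_{2,3,5}$), again $\pi/2,\pi/3,\pi/5$ at the three edges through $O'$, and $\pi/2$ at each of the six middle edges by Lemma~\ref{lem:sym}. Labeling the cube as in Figure~\ref{fig:sym} so that $X$ and $X'$ correspond to the $\pi/2$-vertices of the two links, consider the four faces
\[
G_1=OXZ'Y,\quad G_2=OZY'X,\quad G_3=O'X'ZY',\quad G_4=O'Z'YX'.
\]
These intersect cyclically along the edges $OX$, $ZY'$, $O'X'$, $YZ'$, and each of these four dihedral angles equals $\pi/2$. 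However, $(G_1,G_3)$ and $(G_2,G_4)$ are the two antipodal pairs of faces of the cube, so neither pair of opposite-in-cycle faces shares a vertex. This directly contradicts condition~(3) of Andreev's theorem, so no such $Q$ exists.

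The main obstacle I expect is the combinatorial bookkeeping needed to identify this 4-cycle: the shared edges must combine the two ``privileged'' $\pi/2$-dihedrals at $OX$ and $O'X'$ (coming from the canonical labeling pairing the $\pi/2$-vertices on both sides) with two of the middle edges, and crucially the opposite pairs of faces in the cycle turn out to be exactly the antipodal pairs of the cube. Once this cycle is identified, Andreev's theorem immediately kills the existence of $Q$. As a sanity check, the same line of reasoning does \emph{not} apply to the all-right triangle $R_{2,2,2}$ (which is well-known to be hyperbolically dual to itself via the right-angled hyperbolic cube) because replacing $\pi/3$ and $\pi/5$ by $\pi/2$ makes the opposite pair $(G_1,G_3)$ of the forbidden 4-cycle automatically share an edge.
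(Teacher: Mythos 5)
Your main argument is correct, and it takes a genuinely different route from the paper. The paper picks a specific numerical triangle (edge lengths $1$, $0.5$, $0.6$), verifies by spherical trigonometry that it is slimmer than the polar dual of $R_{2,3,5}$, and then shows that the system of contour equations $\sigma_{a,A'}=\sigma_{b,B'}=\sigma_{c,C'}=0$ coming from Lemma~\ref{lem:quad} has no common solution in $(0,1)^3$, by an explicit estimate. You instead take $R=R_{2,3,5}$ itself and rule out the slanted cube $Q(R_{2,3,5},R_{2,3,5})$ purely combinatorially: under the canonical labeling of Remark~\ref{rem:label} the two $\pi/2$-vertices of the links sit on the opposite edges $OX$ and $O'X'$, the six middle edges are right by Lemma~\ref{lem:sym}, and your four faces $G_1,G_2,G_3,G_4$ do form a cyclic circuit along $OX,ZY',O'X',YZ'$ whose opposite pairs are antipodal face-pairs of the cube; so all four angles in the circuit are $\pi/2$ while neither opposite pair meets, contradicting Andreev's condition~(3). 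I checked the incidences and the strict slimness of $R_{2,3,5}$ relative to its polar dual (all edges of $R_{2,3,5}$ are less than $\pi/2$), and both are fine. Your approach is cleaner and needs no numerics; what the paper's computational choice buys is a triangle with no right angle at all (so the failure of hyperbolic duality is not just a right-angle coincidence), whose polar dual is then fed directly into the proposition that follows the lemma.

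Your closing ``sanity check,'' however, is wrong: $R_{2,2,2}$ is \emph{not} hyperbolically dual to itself, because there is no all-right hyperbolic cube --- the cube contains prismatic $4$-circuits of faces (e.g.\ the four faces around an equator), so the all-right cube violates exactly the Andreev condition~(3) you invoke. This is consistent with Lemma~\ref{lem:fat} and Proposition~\ref{p:reflectioncube}: $R_{2,2,2}$ equals its own polar dual, hence is not \emph{strictly} slimmer than it, and it is not acute. Moreover, changing the edge labels from $\pi/3,\pi/5$ to $\pi/2$ cannot make the antipodal pair $(G_1,G_3)$ ``share an edge''; the incidence structure of the cube does not depend on $\theta$. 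None of this affects the validity of your main proof, but the remark should be deleted or corrected.
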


\begin{proof}
Denote by $R'=A'B'C'$ the spherical triangle with $A'=\pi/2,B'=\pi/3,C'=\pi/5$ 
and set $a'=0.652\cdots,b'=0.553\cdots,c'=0.364\cdots$ to be the lengths of the opposite edges. Let $R=ABC$ be the spherical triangle such that the edge-lengths are $a=1,b=0.5$ and $c=0.6$.
%The existence of such $R$ is guaranteed by the triangle inequality.
We can compute $A = 2.318\cdots,B=0.431\cdots,C=0.514\cdots$ by spherical trigonometry 
and see that $R$ is slimmer than the polar dual of $R'$.
We claim that the three surfaces 
\[\sigma_{a,A'}(y,z)=0,\sigma_{b,B'}(z,x)=0,\sigma_{c,C'}(x,y)=0\qquad\qquad (***)\]
do not have an intersection point, as illustrated in Figure~\ref{fig:nondual} (a). 
To prove this claim, recall that the equation
$\sigma_{b,B'}(z,x)=0$ defines $z$ as a strictly decreasing function of $x$
joining $(\cos b,1)$ to $(1,\cos b)$. 
Since $\cos b = \cos 0.5 > 0.8$ we have that $\sigma_{b,B'}(z,x)=0$ only if $z$ and $x$ are both greater than 0.8.
Similarly, the relation $\cos c = \cos 0.6 >0.8$ implies that $\sigma_{c,C'}(x,y)=0$ only if $x$ and $y$ are both greater than 0.8.
But if we assume $y, z>0.8$ then
$\sigma_{a,A'}(y,z)=-yz + \cos 1 < -0.64 + 0.540\cdots < 0$.
So there are no solutions $0< x,y,z<1$ satisfying $(***)$.
Figure~\ref{fig:nondual} (b) shows the three curves $\sigma_{(a,A')}(x,y)=0, \sigma_{(b,B')}(x,y)=0$
and $\sigma_{(c,C')}(x,y)=0$, drawn in the $xy$--plane.
It follows that $R$ and $R'$ are not hyperbolically dual. 
\end{proof} 

We now show that replacing the spherical triangles in a $\CAT(1)$ complex with fatter triangles does not always produce a strongly $\CAT(1)$ complex, as might be suggested by Lemmas~\ref{lem:222} and~\ref{lem:fat cat 22p}.%The following implies that one can ``puff up'' each triangle in tessellation of $\mathbb{S}^2$ and get a spherical complex that is not strongly $\CAT(1)$.

\begin{proposition} 
 There exists a spherical triangle $R_0$ fatter than $R_{2,3,5}$ such that
 the 
 $(2,3,5)$-tessellation by $R_0$ is \emph{not} strongly $\CAT(1)$.   
\end{proposition}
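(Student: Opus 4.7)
The plan is to invoke Lemma~\ref{lem:notsofat}: let $R$ be the spherical triangle constructed there, and set $R_0$ to be the polar dual of $R$. Polar duality is involutive and reverses the ``slimmer--fatter'' ordering (the angles of the polar dual are $\pi$ minus the edge-lengths of the original, and the edge-lengths are $\pi$ minus the angles), so $R_0$ is fatter than $R_{2,3,5}$. Suppose toward a contradiction that $Y_0$, the $(2,3,5)$-tessellation by $R_0$, is strongly $\CAT(1)$. By Theorem~\ref{thm:HR} one has $Y_0 = G(P)$ for a compact hyperbolic polyhedron $P \subset \HHH$ unique up to orientation-preserving isometry. Since $W_{2,3,5}$ acts on $Y_0$ by isometries permuting the $120$ tiles (cf.\ Definition~\ref{defn:pqr}), this uniqueness lifts the action to an isometric $W_{2,3,5}$-action on $P$, embedding $W_{2,3,5}$ as a finite subgroup of $\operatorname{Isom}(\HHH)$. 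A finite Coxeter group has, up to conjugation, a unique faithful representation in $\operatorname{Isom}(\HHH)$---its standard reflection representation---so the generators $a,b,c$ act as reflections in three geodesic hyperplanes $\Pi_a,\Pi_b,\Pi_c$ meeting at a common interior point $O\in P$ with dihedral angles $\pi/2,\pi/3,\pi/5$ forced by $(bc)^2=(ca)^3=(ab)^5=1$.

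The heart of the argument is to extract a slanted cube $Q\subset P$ whose opposite link-pair is $(R_{2,3,5},R)$, which will contradict Lemma~\ref{lem:notsofat}. Since $R_0$ is scalene, the $W_{2,3,5}$-action is simply transitive on the $120$ tiles of $Y_0$ and hence on the $120$ vertices of $P$; therefore exactly one vertex $O'$ of $P$ lies in the open wedge bounded by $\Pi_a,\Pi_b,\Pi_c$ on a chosen side. I label the three faces of $P$ incident to $O'$ as $F_A,F_B,F_C$ via the Gauss correspondence with the vertices $A,B,C$ of the tile of $Y_0$ at $O'$. Since $b$ and $c$ both fix the vertex $A$ of $R_0$, they preserve $F_A$; but $F_A$ cannot equal $\Pi_b$ or $\Pi_c$ (a symmetry reflection hyperplane of $P$ cannot be a face of $P$, as $P$ lies on only one side of any face), so $F_A\perp\Pi_b$ and $F_A\perp\Pi_c$, and analogously for $F_B,F_C$. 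Let $X,Y,Z$ be the feet of the perpendiculars from $O$ to $F_A,F_B,F_C$ (lying on $\Pi_b\cap\Pi_c,\ \Pi_c\cap\Pi_a,\ \Pi_a\cap\Pi_b$ respectively), and let $X',Y',Z'$ be the midpoints of the three edges of $P$ at $O'$ running along $F_B\cap F_C,\ F_C\cap F_A,\ F_A\cap F_B$; each such midpoint is fixed by the involution ($a^*,b^*,c^*$ respectively) that swaps the endpoints of its edge, and so lies on $\Pi_a,\Pi_b,\Pi_c$ respectively.

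The region $Q\subset P$ bounded by these six planes (taking the half-space on the $OO'$-side of each) is then a convex hexahedron with the eight vertices $O,X,Y,Z,O',X',Y',Z'$ and the combinatorics of a cube; by construction the three edges $OX,OY,OZ$ at $O$ are perpendicular to the opposite faces $F_A,F_B,F_C$, so $Q$ is a slanted cube with distinguished vertex $O$. Its link at $O$ is the spherical triangle cut out by $\Pi_a,\Pi_b,\Pi_c$ on the unit tangent sphere at $O$, with angles $\pi/2,\pi/3,\pi/5$---a copy of $R_{2,3,5}$. Its link at $O'$ coincides with the link of $O'$ in $P$, which by the Hodgson--Rivin correspondence is the polar dual of the tile of $G(P)=Y_0$ at $O'$, namely $R$. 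Thus $R$ is hyperbolically dual to $R_{2,3,5}$, contradicting Lemma~\ref{lem:notsofat}. The main obstacle will be the delicate bookkeeping in the previous paragraph: confirming that the six bounding planes actually cut out a non-degenerate convex cube with exactly these eight vertices and the expected combinatorial type, rather than some degenerate or differently-structured region. This should follow from convexity of $P$ together with the $W_{2,3,5}$-symmetry, but it is the technical crux of the argument.
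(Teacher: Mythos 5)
Your argument is correct and follows essentially the same route as the paper: take $R_0$ to be the polar dual of the triangle $R$ from Lemma~\ref{lem:notsofat}, apply Theorem~\ref{thm:HR} to the putative strongly $\CAT(1)$ tessellation to obtain a $W_{2,3,5}$-symmetric hyperbolic polyhedron $P$, and extract from a fixed point of the symmetry group together with a vertex of $P$ a slanted cube exhibiting $R$ as hyperbolically dual to $R_{2,3,5}$, contradicting Lemma~\ref{lem:notsofat}. The only cosmetic differences are that the paper obtains the common fixed point directly from the Bruhat--Tits fixed point theorem rather than via uniqueness of the reflection representation, and it treats the cube bookkeeping you flag as the ``technical crux'' at the same (brief) level of detail.
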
 
 \begin{proof} 
Let $R$ be the spherical triangle with edge lengths $a=1, b=0.5$ and $c=0.6$, and let $R_0$ be its polar dual. As above $R_0$ is fatter than $R_{2,3,5}$. 
 Assume that  the $(2,3,5)$-tessellation $T$ by $R_0$ is strongly $\CAT(1)$. 
By Theorem~\ref{thm:HR}, $T$ is the Gauss image of a hyperbolic polyhedron $P$.  Because $T$ is symmetric under the reflection group $W_{2,3,5}$  we have that $P$ is symmetric under this group, and by the Bruhat-Tits fixed point theorem, there is a point $c$ in the interior of $P$ which is fixed by this group.  Form edges emanating from $c$ which are  perpendicular to the faces of $P$.  Then consider a vertex $v$ of $P$.   The link of $v$ is the polar dual of $R_0$, namely the triangle $R$.  There are three faces of $P$ meeting $v$ and three edges emanating from $c$ which meet these faces perpendicularly.  Pairs of these edges span three planes which, together with the faces meeting $v$, form a slanted cube.  The link at $c$ of this slanted cube is $R_{2,3,5}$.  
But by Lemma \ref{lem:notsofat}, $R$ and $R_{2,3,5}$ are not hyperbolically dual so we have a contradiction.
 \end{proof} 

We say two triangles $R=ABC$ and $R'=A'B'C'$ are \emph{$\epsilon$--close} if $| A - A' |, |BC - B'C'| <\epsilon$ and similar inequalities hold for the other two pairs of angles and two pairs of edges. 
For a sufficiently small ``fat perturbation'' from the polar dual, we might always have hyperbolic duality, so we ask the following. 

\begin{figure}[t]
\begin{center}
\subfloat[(a)]{
\includegraphics[width=.3\textwidth]{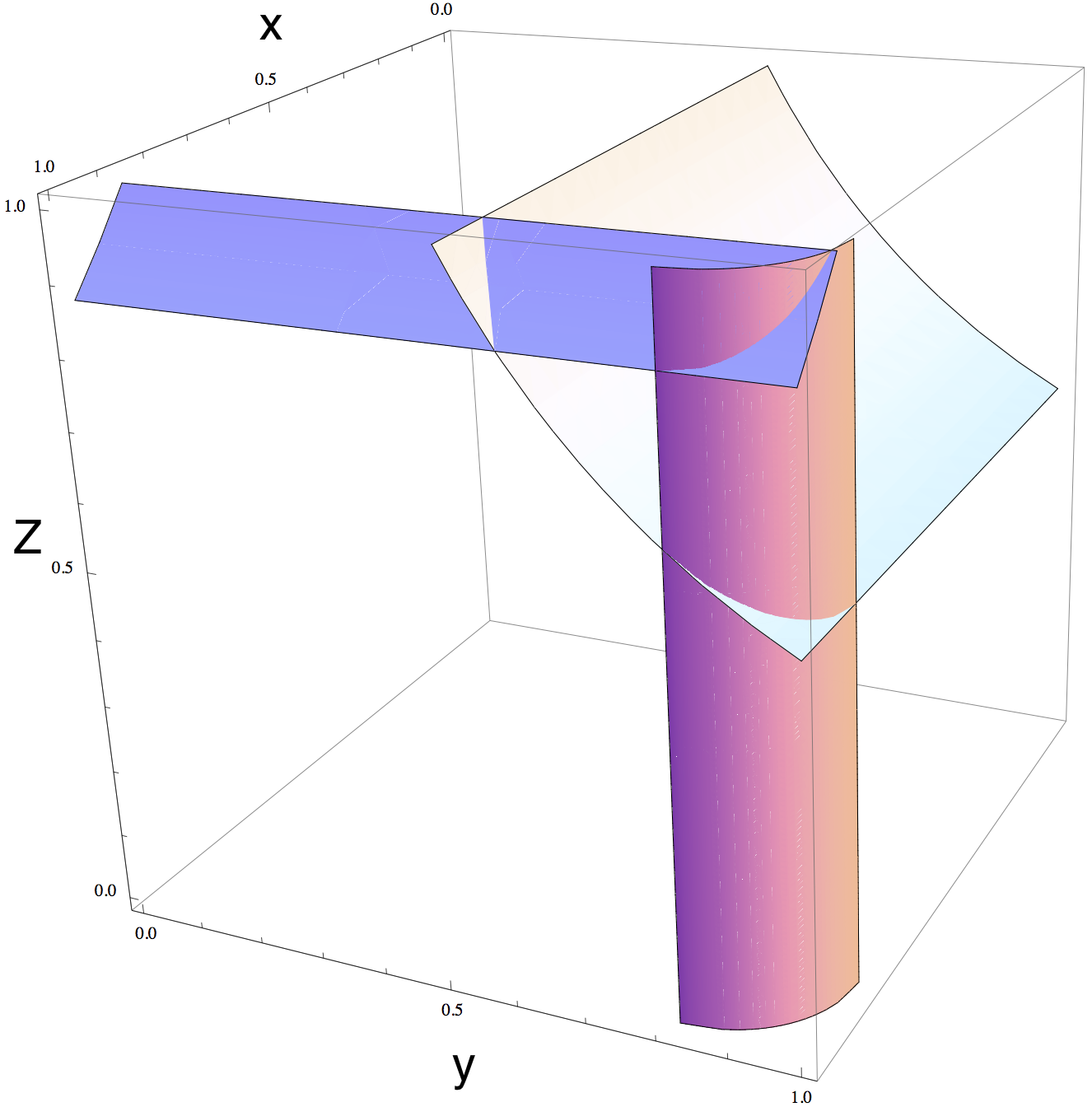}
}
$\qquad$
\subfloat[(b)]{
\includegraphics[width=.35\textwidth]{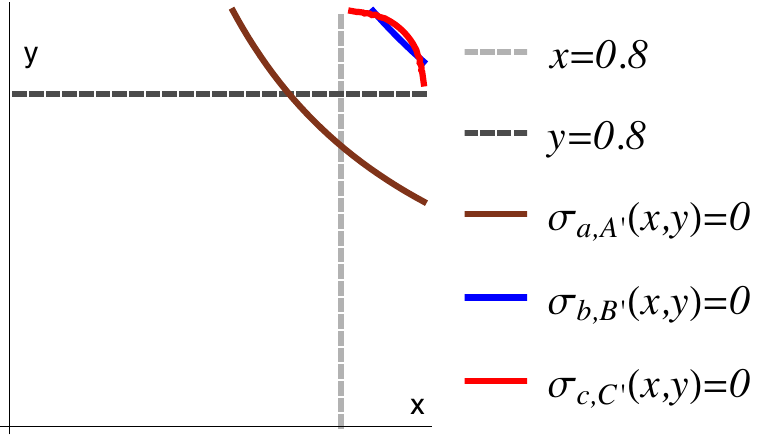}
}
\end{center}
\caption{Lemma~\ref{lem:notsofat}.}\label{fig:nondual}
\end{figure}

\begin{question}\label{que:dual}
Given a spherical triangle $R=ABC$, does there exist $\epsilon$ satisfying the following?
\begin{quote}
If $R'=A'B'C'$ is a spherical triangle which is slimmer than, and $\epsilon$--close to, the polar dual of $R$,
then $R'$ is hyperbolically dual to $R$.
\end{quote}
\end{question}

We conjecture that $\CAT(1)$-ness is preserved under sufficiently close perturbation, even in a more general situation as follows.

\begin{conjecture}[Fat CAT Conjecture]\label{conj:fat cat}
Let $T$ be a geodesic triangulations of $\mathbb{S}^2$ and denote each triangle of $T$ by $R_i = A_i B_i C_i$ for $i=1,2,\ldots,m$.
Then there exists $\epsilon>0$ satisfying the following:
\begin{quote}
Let $T'$ be a spherical complex which is combinatorially isomorphic to $T$ by a fixed combinatorial isomorphism $T\to T'$. Let us denote by $R_i'=A_i'B_i'C_i'$ the image of $R_i=A_iB_iC_i$ by this isomorphism for each $i$.
Suppose $R_i'$ is fatter than, and is $\epsilon$--close to, $R_i$ for each $i$.
Then $T'$ is $\CAT(1)$.
\end{quote}
\end{conjecture}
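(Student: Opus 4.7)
The plan is to verify the two conditions of Lemma~\ref{lem:cat1sph} for $T'$. The link condition --- for each vertex $v$, the link (a circle of circumference equal to the cone angle at $v$) must be $\CAT(1)$, i.e., have circumference $\geq 2\pi$ --- is automatic: since $T$ is a geodesic triangulation of $\SS$, every cone angle in $T$ is exactly $2\pi$, and because each triangle of $T'$ is strictly fatter than the corresponding triangle of $T$, every cone angle in $T'$ strictly exceeds $2\pi$. This cone-angle excess is also one of the hypotheses of Theorem~\ref{thm:HR}, so if we succeed in verifying the stronger closed-geodesic condition ``length $>2\pi$'', we conclude that $T'$ is the Gauss image of a compact hyperbolic polyhedron, and hence is strongly $\CAT(1)$.

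The main task is thus: for $\epsilon$ sufficiently small, every closed geodesic of $T'$ has length at least $2\pi$. The plan is a compactness-and-continuity argument. Fix a threshold $L = 3\pi$. For piecewise spherical complexes $T''$ combinatorially isomorphic to $T$ and with parameters within a bounded neighborhood of those of $T$, the combinatorial type of a closed geodesic --- the cyclic sequence of triangles it traverses, together with the edges or vertices by which it leaves each triangle --- ranges over a finite set once its length is capped by $L$, since in a neighborhood of the parameters of $T$ the triangles have bounded geometry. For each fixed combinatorial type $C$, the minimum length of a locally geodesic closed curve of type $C$ depends continuously on the parameters (edge lengths and angles) of the triangles of $T''$.

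On $T$ itself, every closed geodesic is a great circle of length exactly $2\pi$ and carries a combinatorial type $C_\gamma$. For combinatorial types $C$ that are \emph{not} realized as short geodesics in $T$, continuity keeps them of length strictly larger than $2\pi$ (or unrealized) for small $\epsilon$. For each $C_\gamma$ the length of the realizing curve in $T'$ is controlled by a \emph{fattening monotonicity}: within each triangle, a geodesic segment with prescribed entry/exit edges becomes at least as long when the triangle is fattened, a claim reducible to standard spherical monotonicity. Together these handle all closed geodesics of $T'$ whose combinatorial type matches some geodesic of $T$.

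The main obstacle is closed geodesics of $T'$ with a \emph{new} combinatorial type. These arise precisely from great circles of $T$ that pass through a vertex $v$: in $T$ the cone angle at $v$ equals $2\pi$, allowing a straight pass, but in $T'$ the cone angle strictly exceeds $2\pi$, forcing the curve to detour on one side of $v$. The length of the detoured curve compared to $2\pi$ is the sum of a first-order-in-$\epsilon$ gain from fattening the traversed triangles and a potential loss coming from the detour; I expect the loss to be second-order in $\epsilon$, since both the cone angle excess at $v$ and the spatial detour are of order $\epsilon$. Making this first-order-beats-second-order comparison uniform --- particularly at vertices that lie on several great circles of $T$ simultaneously, where multiple detours interact --- is the technical heart of the conjecture, and is where I expect the proof to require genuinely new input via the first-variation formula for length on a spherical cone surface.
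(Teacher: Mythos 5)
The statement you are trying to prove is stated in the paper as an open conjecture (the ``Fat CAT Conjecture''); the paper offers no proof of it, so there is nothing to compare your argument against except the paper's surrounding evidence --- and that evidence cuts against the load-bearing step of your proposal. Your ``fattening monotonicity'' claim, that a geodesic segment with prescribed entry and exit edges becomes at least as long when the triangle is fattened, is both unjustified and, as a global principle, false: the unnumbered Proposition following Lemma~\ref{lem:notsofat} exhibits a triangle $R_0$ fatter than $R_{2,3,5}$ whose $(2,3,5)$-tessellation is \emph{not} strongly $\CAT(1)$, even though the $(2,3,5)$-tessellation by $R_{2,3,5}$ is the round sphere. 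So fattening alone can shorten some closed geodesic below $2\pi$, and any argument in which the $\epsilon$--closeness enters only through ``finitely many combinatorial types'' and not through the length estimates themselves cannot succeed. Even locally the monotonicity is delicate: the chord length $\cos\ell=\cos s\cos t+\sin s\sin t\cos A$ does increase with $A$ for fixed distances $s,t$ from the vertex being cut off, but consecutive chords are coupled through division points on shared edges whose lengths also change under fattening, and the sign of $\partial\ell/\partial s$ is not controlled; you have not specified a correspondence between division points of $T$ and $T'$ under which every chord lengthens, and it is not clear one exists.

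The second, and larger, gap is one you name yourself: closed geodesics of $T'$ whose combinatorial type is not realized in $T$, arising from great circles of $T$ through vertices (where the cone angle jumps from exactly $2\pi$ to strictly more than $2\pi$). Your proposal for this case is an expectation --- that the detour loss is second order in $\epsilon$ while the fattening gain is first order --- together with an acknowledgement that making this uniform ``is the technical heart of the conjecture'' and ``require[s] genuinely new input.'' That is an accurate self-assessment, but it means the proposal is a proof strategy, not a proof. Note also that the first-order gain is itself not established (a great circle of $T$ lying entirely along edges through several vertices gains length only through edge-lengthening, while a great circle through a vertex transversally must be compared to a nearby broken curve in $T'$, and the competitor minimizing length in $T'$ need not be close to the original circle at all). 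The compactness step (finitely many combinatorial types of closed geodesics of length at most $3\pi$) also needs an argument, since a local geodesic can wind around a cone point of angle greater than $2\pi$ and cross many triangles in a short distance; this is likely repairable, but as written it is asserted rather than proved. In short: the cone-angle half of your argument is correct and easy, but the closed-geodesic half contains the entire difficulty of the conjecture and remains open in your write-up just as it does in the paper.
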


\section{Subordinate triangulations}\label{s:subordinate}
Throughout this section, we let $(L,\mathbf{m})$ be a labeled abstract triangulation of $\mathbb{S}^2$
such that each face induces a finite Coxeter group in $W(L,\mathbf{m})$.

\subsection{From hyperbolicity to angle constraints}\label{ss:fromhyp}
\begin{definition}\label{defn:subordinate}
Let $L'$ be a geodesic triangulation of $\mathbb{S}^2$
such that there is a combinatorial isomorphism $\mu\co L'\to L$.
Suppose that for each triangle $ABC$ of $L'$
if we let $p,q,r$ be the labels of $\mu(BC),\mu(CA)$ and $\mu(AB)$ respectively
then $ABC$ is slimmer than the polar dual of $R_{p,q,r}$.
Then we say that $L'$ is \emph{subordinate to $(L,\mathbf{m})$
(with respect to $\mu$)}.
\end{definition}

In particular, an acute triangulation is subordinate to $(L,\mathbf{2})$ by definition.
We generalize the backward direction of Theorem~\ref{th:main racg}.

\begin{theorem}\label{thm:main general}
If $W(L,\mathbf{m})$ is one-ended and word-hyperbolic, then $L$ is realized by a geodesic triangulation subordinate to $(L, \mathbf{m})$. 
\end{theorem}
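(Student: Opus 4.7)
The plan is to generalize the backward implication of Theorem~\ref{th:main racg} by combining Andreev's theorem with the slanted-cube / hyperbolic-duality machinery developed in Sections~\ref{s:key} and~\ref{s:dual}. The idea is to realize the dual polyhedron of $L$ in $\HHH$ with dihedral angles $\pi/\mathbf{m}(e)$, and then read off a subordinate triangulation from the visual sphere decomposition.

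The first step will be to translate ``$W(L,\mathbf{m})$ is one-ended and word-hyperbolic'' into Andreev's four conditions for the abstract simple polyhedron $P$ whose boundary is combinatorially dual to $L$, with the angle $\pi/\mathbf{m}(e^*)$ prescribed on the edge $e$ of $P$ dual to an edge $e^*$ of $L$. Condition~(1) (vertex angle sums exceed $\pi$) corresponds face-by-face to $1/p+1/q+1/r>1$, which is the standing assumption that each face of $L$ induces a finite Coxeter group. Condition~(2) (three pairwise-adjacent faces with no common vertex have angle sum less than $\pi$) translates, via Lemma~\ref{lem:hypgen}, into $1/p+1/q+1/r<1$ for every non-face $3$-cycle: one-endedness forces the induced Coxeter subgroup to be infinite, and word-hyperbolicity rules out $1/p+1/q+1/r=1$ (which would yield a Euclidean triangle subgroup and hence $\mathbb{Z}^2\le W$). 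Condition~(3) is exactly the no-all-right-square condition needed to exclude $\mathbb{Z}^2$ arising from a $4$-cycle all of whose labels are $2$. Condition~(4), the triangular prism condition, is handled by a separate case check on $L$; the tetrahedral case of Andreev is excluded because it would force $W(L,\mathbf{m})$ to be finite. Theorem~\ref{thm:andreev} then produces a convex hyperbolic polyhedron $P\subset\HHH$ realizing the prescribed angles.

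Having obtained $P$, I translate it so that its interior contains the origin $O$ of the Poincar\'e ball and consider the visual sphere decomposition $T$ of $\SS$ at $O$ as in Section~\ref{s:key}. By definition $T$ is geodesic and combinatorially dual to $\partial P$, hence combinatorially isomorphic to $L$ via a natural isomorphism $\mu\colon T\to L$. The geometric heart is to show, for each vertex $v$ of $P$ (with dihedral angles $\pi/p,\pi/q,\pi/r$ at $v$), that the triangle $R_v$ of $T$ at $v$ is hyperbolically dual to the vertex link $R_{p,q,r}=\link(v,P)$. Because $\pi/p,\pi/q,\pi/r$ are all non-obtuse and $O$ lies in the infinite cone $C$ at $v$ with link $R_{p,q,r}$, Lemma~\ref{lem:cone} furnishes a slanted hyperbolic cube $Q_v$ with $O$ and $v$ as opposite distinguished vertices, cut out as the intersection of $C$ with the cone at $O$ spanned by the perpendiculars $\gamma_x,\gamma_y,\gamma_z$ from $O$ to the three faces at $v$. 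By construction $\link(v,Q_v)=R_{p,q,r}$, while the spherical triangle $\link(O,Q_v)$ on the unit tangent sphere at $O$ is isometric to $R_v$ on $S^2_\infty$ via the geodesic rays, as both are determined by the same three perpendiculars $\gamma_x,\gamma_y,\gamma_z$. Hence $R_v$ is hyperbolically dual to $R_{p,q,r}$, and Lemma~\ref{lem:fatgen} yields that $R_v$ is slimmer than the polar dual of $R_{p,q,r}$. Varying $v$ shows that $T$ is subordinate to $(L,\mathbf{m})$ with respect to $\mu$, as required.

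I expect the main obstacle to be the first step: checking all four of Andreev's conditions from the pair ``one-ended and word-hyperbolic,'' in particular the prism condition~(4) and the boundary cases such as the tetrahedral exclusion. The geometric core---lifting the slanted-cube argument of Section~\ref{s:main} to the general $(p,q,r)$ setting---goes through uniformly thanks to Lemmas~\ref{lem:cone} and~\ref{lem:fatgen}, with no new $\CAT$-theoretic input required for this direction.
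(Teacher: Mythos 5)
Your second and third paragraphs reproduce the geometric core of the paper's argument: once a compact hyperbolic polyhedron dual to $L$ with dihedral angles $\pi/\mathbf{m}$ is in hand, both you and the paper place the origin $O$ in its interior, cut it into slanted cubes by dropping perpendiculars from $O$ to the faces, read off the induced geodesic triangulation of the sphere at infinity, and invoke Lemma~\ref{lem:fatgen} to get subordination. Where you genuinely differ is in how that polyhedron is produced. The paper realizes the dual Davis complex as $\HHH$ by appealing to geometrization for manifolds admitting a symmetry \cite{BLP2005} and takes the fundamental domain of the resulting reflection orbifold; you instead verify Andreev's conditions (Theorem~\ref{thm:andreev}) directly from one-endedness and word-hyperbolicity. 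Your translations of conditions (1)--(3) are correct, and condition (4) does check out: a triangular prism with all base and top angles $\pi/2$ dualizes to a triangular bipyramid whose Coxeter group is $W_{p,q,r}\times(\ZZ * \ZZ)$, which is either not one-ended (if $W_{p,q,r}$ is finite, the equatorial $3$-cycle is a separating complete subgraph inducing a finite group) or contains $\Z\times\Z$. This Andreev route is more elementary and self-contained than the paper's appeal to geometrization.

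However, your dismissal of the tetrahedral case is wrong, and this is a genuine, if localized, gap. If $L$ is the boundary of a tetrahedron, $W(L,\mathbf{m})$ need not be finite: the compact hyperbolic Coxeter tetrahedra (for instance the linear diagram with consecutive labels $5,3,4$ and all other labels $2$) have every vertex link spherical, every $3$-cycle bounding a face, and act cocompactly on $\HHH$, so they are infinite, one-ended and word-hyperbolic and satisfy every hypothesis of the theorem. Andreev's theorem as stated excludes the tetrahedron, so for these inputs your argument produces no polyhedron at all, and the justification you offer (``it would force $W(L,\mathbf{m})$ to be finite'') is false. The conclusion of the theorem is still true in this case --- the hyperbolic Coxeter tetrahedron itself serves as the fundamental polyhedron and the slanted-cube argument applies verbatim --- but you need a separate realization statement for simplices (e.g.\ the Gram matrix criterion or the Lann\'er classification) to cover it.
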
 
%\fi

\begin{proof} 
%Since $W(L,m)$ is one-ended, none of the spherical subgroups are separating, by Davis. Therefore the complex $L$ which is obtained from $L^{(1)} $ by adding a 2-simplex to every spherical subgraph  is exactly the triangulation $L$.   
Let $W=W(L,\mathbf{m})$ and $X$ be the dual Davis complex with respect to a topological 3-ball $P$.
%%%%%%%%%%%%%%%
%%%%%%%%%%%%%%%
%We note that $X$ is a 3-manifold and $W$ acts naturally on $X$ by reflections. A 3-manifold $M$ is irreducible if every embedded 2-sphere bounds a 3-ball, and atoroidal if every $\mathbb{Z}^2$ in $\pi_1(M)$ is peripheral and $M$ is not an $I$-bundle over the torus or Klein bottle or a disc bundle over $S^1$.  Since $X$ is irreducible and atoroidal, 
%%%%%%%%%%%%%%%
%
% How about using the following phrase, instead of the preceding paragraph?
%
Since $X$ is contractible and $W(L,\mathbf{m})$ does not contain $\mathbb{Z}^2$, the $3$--orbifold $X\big/W(L,\mathbf{m})$ can be realized as a hyperbolic orbifold by geometrization of $3$--orbifolds \cite{BLP2005}.  
The quotient reflection orbifold has the fundamental domain $P_H\approx P$, and the orbifold fundamental group is generated by reflections in the faces of $P_H$.  
In order to satisfy the relations in $W$, the link of each vertex in $P_H$ is a spherical triangle of the form $R_{p,q,r}$ for some $p,q,r$.   Move $P_H$ by an isometry so that the origin is in the interior of $P_H$.  Taking rays from the origin which meet each face perpendicularly divides $P_H$ into a union of hyperbolic slanted cubes, as in the proof of Lemma~\ref{lem:visual}.
% of partitioning of a compact hyperbolic polyhedron into cubes after Theorem \ref{thm:cap}. 
The endpoints of the rays form the vertices of a triangulation $T$ of $\mathbb{S}^2_\infty$.  
This triangulation is subordinate to $(L,\mathbf{m})$ by Lemma \ref{lem:fatgen} as each triangle on $\mathbb{S}^2_\infty$ is a link of the associated slanted cube at the origin $O$. Since $T$ is combinatorially dual to the polyhedron $P$, we have that $T$ is combinatorially isomorphic to $L$. 
\end{proof} 

\subsection{From subordinate triangulations to hyperbolicity}\label{ss:tohyp}
We prove a partial converse to Theorem~\ref{thm:main general}.

\begin{theorem}\label{thm:main 22p} 
Assume that $L$ can be realized as a geodesic triangulation that is subordinate  to $(L,\mathbf{m})$.
Suppose further that each face of $L$ has at least two edges which are labeled by $2$.
Then $W(L,\mathbf{m})$ is one-ended and word-hyperbolic.
\end{theorem}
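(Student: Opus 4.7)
The plan is to mimic the backward direction of Theorem~\ref{th:main racg}, with two modifications: replace the strongly obtuse polyhedron (whose vertex-links are fatter than $R_{2,2,2}$) by a hyperbolic polyhedron whose vertex-links are fatter than the appropriate $R_{2,2,p}$, and use Lemma~\ref{lem:fat cat 22p} in place of Lemma~\ref{lem:222}.

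First I would build a hyperbolic polyhedron $P_H$ combinatorially dual to $L$ whose link at each vertex is fatter than the corresponding $R_{2,2,p}$. Realize $L'$ as a geodesic triangulation of $\SS\subset\EEE$ and, exactly as in the proof of Lemma~\ref{lem:Gaussimage}, let $P_E$ be the intersection over the vertices of $L'$ of the tangent closed half-spaces containing $\SS$. Then $P_E$ is a compact convex Euclidean polyhedron whose Gauss image is $L'$, and by Observation~\ref{o:dual} its link at each vertex is the polar dual of the corresponding face of $L'$. Since $L'$ is subordinate to $(L,\mathbf{m})$ with every face labeled $(2,2,p)$, each face of $L'$ is slimmer than $R^*_{2,2,p}$, and hence its polar dual is fatter than $R_{2,2,p}$. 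After translating the origin inside $P_E$ and rescaling by a small $\epsilon>0$ in the Poincar\'e ball model, we obtain a hyperbolic polyhedron $P_H$ whose hyperbolic face-angles and dihedral angles are arbitrarily close to those of $P_E$; for $\epsilon$ small enough each vertex-link of $P_H$ remains fatter than $R_{2,2,p}$. Since the dihedral angle $\theta_e$ of $P_H$ along the edge dual to $e$ equals the corresponding angle in its vertex-link, we obtain $\theta_e>\pi/\mathbf{m}(e)$ for every $e\in L^{(1)}$.

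Next I would verify the one-endedness criterion of Lemma~\ref{lem:hypgen}(3). Suppose a 3-cycle in $L^{(1)}$ does not bound a face; then the three dual faces of $P_H$ pairwise intersect without a common vertex. For any convex polyhedron in $\HHH$, such a configuration forces $\sum_i\theta_{e_i}<\pi$ (cutting $P_H$ by a plane transverse to the three edges produces a genuine hyperbolic triangle whose angles are the $\theta_{e_i}$). Combined with the bound $\theta_{e_i}>\pi/m_i$ established above, this yields $\sum_i 1/m_i<1$, so the 3-cycle induces an infinite Coxeter group. Lemma~\ref{lem:hypgen}(3) then implies that $W$ is one-ended and that the dual Davis complex $X$ of $W$ with respect to $P_H$ (Definition~\ref{defn:dualpqr}) is contractible.

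Finally I would verify $X$ is $\CAT(-1)$. By Observation~\ref{obs:linkpqr} the link of each vertex of $X$ is the $(2,2,p)$-tessellation by the corresponding (fatter-than-$R_{2,2,p}$) link of $P_H$, hence strongly $\CAT(1)$ by Lemma~\ref{lem:fat cat 22p}; the bound $\theta_e>\pi/\mathbf{m}(e)$ makes the link of each edge of $X$ a circle of length $2\mathbf{m}(e)\theta_e>2\pi$. Gromov's link condition gives local $\CAT(-1)$, and combined with simple connectedness (from contractibility), Cartan--Hadamard promotes this to global $\CAT(-1)$. Since $W$ acts geometrically on $X$ by construction, Lemma~\ref{lem:cathyp} concludes that $W$ is word-hyperbolic. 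The step I expect to be the main obstacle is the strict dihedral-angle inequality $\sum_i\theta_{e_i}<\pi$ for a general convex hyperbolic polyhedron: since individual angles $\theta_e$ may exceed $\pi/2$, Andreev's theorem does not apply verbatim, and the transverse cross-section argument---though morally correct---likely deserves a short separate lemma.
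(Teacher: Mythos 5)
Your proposal is correct and follows essentially the same route as the paper: tangent planes at the vertices of the subordinate triangulation give a Euclidean polyhedron $P_E$ with Gauss image $L$, a small rescaling into the Poincar\'e ball gives a hyperbolic $P_H$ whose vertex links remain fatter than the relevant $R_{2,2,p}$, and Lemma~\ref{lem:fat cat 22p} applied to the links of the dual Davis complex yields $\CAT(-1)$-ness and hence the conclusion via Lemma~\ref{lem:cathyp}. Where you go beyond the paper is in the middle step: the paper simply asserts that $X$ is $\CAT(-1)$ and that one-endedness follows ``in particular,'' whereas you explicitly verify the hypothesis of Lemma~\ref{lem:hypgen}(3) --- that every non-face 3-cycle induces an infinite Coxeter group --- via the dihedral-angle bounds $\theta_e>\pi/\mathbf{m}(e)$ and the prismatic-circuit inequality $\sum_i\theta_{e_i}<\pi$. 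This is a genuine service, since contractibility (equivalently, simple connectedness) of $X$ is exactly what Cartan--Hadamard needs to upgrade the local link condition to global $\CAT(-1)$, and the paper leaves this implicit. You are right to flag the prismatic-circuit inequality as the delicate point: Andreev's condition (2) is stated only for non-obtuse assignments, and since your dihedral angles can exceed $\pi/2$ you need the general convex-polyhedron version (three pairwise-intersecting faces with no common vertex admit a common perpendicular plane whose cross-section is a hyperbolic triangle with angles $\theta_{e_i}$); alternatively one can observe that for the Euclidean polyhedron $P_E$ such a circuit gives $\sum_i\theta_{e_i}\le\pi$ and that passing to $P_H$ strictly decreases each angle. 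Either way the argument closes, so this is a correct and somewhat more careful rendering of the paper's proof rather than a different one.
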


\begin{proof}
We closely follow the proof of Theorem~\ref{th:main racg}, omitting some details.
By considering the tangent planes at the vertices of $L$,
we can find a Euclidean polyhedron $P_E$ whose Gauss image is $L$.
Set $P_H$ be a hyperbolic polyhedron whose face and dihedral angles are sufficiently close to those of $P_E$.
The link of each vertex of $P_H$ is still fatter than $R_{2,2,p}$ where we allow $p$ to be various.
We let $X$ be the dual Davis complex of $W(L,\mathbf{m})$ with respect to $P_H$.
For each vertex $v$ in $X$ there exists a vertex $u$ in $P_H$ and $p\ge2$
satisfying the following:
the link $R$ of $u$ is fatter than $R_{2,2,p}$
and the link $Y$ of $v$ is the $(2,2,p)$-tessellation by $R$.
Lemma~\ref{lem:fat cat 22p} implies that $X$ is a $\CAT(-1)$, and hence contractible, 3-manifold.
In particular, $W$ is one-ended and word-hyperbolic.
\end{proof}

\section{Planar surfaces}\label{s:planar}
In this section, we allow hyperbolic polyhedra to have ideal vertices.
\subsection{All-right hyperbolic polyhedron}\label{ss:ideal}
Let $P$ be an all-right hyperbolic polyhedron possibly with ideal vertices.
Then the combinatorial dual of $\partial P$ is a combinatorial 2-complex $L_0$ homeomorphic to $\mathbb{S}^2$
such that each face in $L_0$  is either a triangle or a square depending on whether it comes from 
a non-ideal or ideal vertex of $P$. 
For convention, we will further triangulate each square face into a \emph{square-wheel} as shown in Figure~\ref{fig:maehara} (b) and call the resulting abstract triangulation of $\mathbb{S}^2$ 
as the \emph{combinatorial nerve} of $P$.

We metrize the combinatorial nerve $L$ as follows. First we translate $P$ so that the origin of the Poincar\'{e} ball model is inside $P$.
For each vertex $u$ of $L$ corresponding to the ideal vertex $v_\infty\in\partial\HHH$ of $P$ we simply place $u$ at $v_\infty$.
If a vertex $u$ of $L$ corresponds to a face $F$ of $P$ we let $D$ be the intersection of $\partial\HHH$ and the geodesic half-spaces, not containing $P$, determined by $F$.
We then place $u$ at the center of the disk $D$.
Whenever two vertices of $L$ are adjacent we connect the corresponding vertices in $\partial\HHH$ by a geodesic. The resulting geodesic triangulation of $\partial\HHH=\mathbb{S}^2$ is called
a \emph{geometric nerve} of $P$. We will often omit the adjective ``combinatorial'' or ``geometric'' when there is no danger of confusion. The disks corresponding to non-adjacent vertices of $L$ are disjoint~\cite[Corollary 9.4]{KLV1997}.

The geometric nerve is actually a visual sphere decomposition of $P$ as defined in~Section~\ref{s:key}, slightly generalized to non-compact polyhedra. Figure~\ref{fig:quadhyp} shows an octahedron that will be cut out from $P$ in the visual sphere decomposition where $\infty$ denotes an ideal vertex of $P$ and $O$ is the origin of the Poincar\'{e} ball model. In the figure, the vertices adjacent to $O$ are the feet of the perpendiculars to the faces of $P$ which intersect the vertex $\infty$.
%These disks are called \emph{caps} in the literature.
We will need the following generalization of Corollary~\ref{cor:allright}.
For the proof, see~\cite{KLV1997}.

\begin{theorem}\cite[Theorem 9.1]{KLV1997}\label{thm:idealallright}
Let $L$ be an abstract triangulation of $\mathbb{S}^2$ which is flag.
Assume that (i) each $4$-cycle in $L^{(1)}$ bounds a region in $\mathbb{S}^2$ with exactly one interior vertex
and (ii) no pair of degree-four vertices in $L$ are adjacent.
Then there exists an all-right hyperbolic polyhedron $P$ whose combinatorial nerve is $L$.
\end{theorem}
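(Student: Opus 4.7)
The approach is to reduce the statement to a version of Andreev's theorem that permits ideal vertices. First I would argue that, by hypothesis (i), every degree-$4$ vertex $v$ of $L$ is the center of a square-wheel: $\link(v)$ is a $4$-cycle, and the disk containing $v$ has exactly one interior vertex (namely $v$), so the four triangles around $v$ form a square-wheel. Hypothesis (ii) then ensures the wheel centers form an independent set and the wheels are pairwise disjoint. Applying (i) once more to an arbitrary $4$-cycle $C\subset L^{(1)}$ shows that $C$ bounds a square-wheel on at least one side. Collapsing each wheel back to a single square $2$-face yields a combinatorial $2$-complex $L'$ on $\SS$ whose $2$-faces are either triangles (inherited from $L$) or squares (one per wheel), and $L$ is recovered from $L'$ by re-triangulating every square face as a square-wheel with a new interior vertex.

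Next I would form the combinatorial dual $P^\ast$ of $L'$: an abstract polyhedron whose vertices are $3$-valent (dual to triangles of $L'$) or $4$-valent (dual to squares of $L'$). The plan is to realize $P^\ast$ in $\HHH$ as an all-right hyperbolic polyhedron with the $4$-valent vertices becoming ideal, by invoking an extension of Theorem~\ref{thm:andreev} that allows a vertex of valence $n$ to be ideal precisely when the prescribed angles at it sum to $(n-2)\pi$. Once a realization $P$ exists, its combinatorial nerve as defined in Section~\ref{ss:ideal} is $L'$ with each square face triangulated as a square-wheel, which by construction equals $L$.

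It remains to verify the combinatorial hypotheses of the ideal Andreev theorem with $\theta\equiv \pi/2$ on all edges. At finite $3$-valent vertices the angle sum is $3\pi/2 > \pi$, and at ideal $4$-valent vertices it is $2\pi = (4-2)\pi$, as required. The Andreev condition forbidding three pairwise intersecting faces without a common vertex translates to $L'$ being flag, which follows from $L$ being flag: a $3$-cycle of $L'$ is a $3$-cycle of $L$ whose bounding triangle cannot be a wheel triangle (such a triangle would use two spokes, none of which belong to $L'$). The condition forbidding a cyclic configuration of four pairwise intersecting faces with no two opposite intersecting reads, in the ideal version: every $4$-cycle in $L'$ either bounds a $2$-face of $L'$ or has a chord. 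This is precisely hypothesis (i), since every $4$-cycle of $L'$ is a $4$-cycle of $L$ bounding a wheel (hence a square $2$-face of $L'$) on at least one side. Hypothesis (ii) provides the final ingredient, ruling out pathological configurations such as two wheels sharing a common boundary $4$-cycle (which would force an octahedral degeneration, as all four cycle vertices would then have degree $4$ while being adjacent to wheel centers) and the Andreev triangular-prism exception.

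The main obstacle will be the last translation: rigorously setting up the ideal-vertex extension of Andreev's theorem and checking that the hypotheses on $L$ suffice to avoid every degenerate case (zero-length ideal edges, pillow-type bigons, exceptional prisms). This careful case analysis is the technical core of \cite[Theorem~9.1]{KLV1997}. Once the realization is obtained, identifying the combinatorial nerve with $L$ is immediate from the construction of Section~\ref{ss:ideal}.
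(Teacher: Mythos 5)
The paper does not actually prove this statement: it is quoted from \cite[Theorem 9.1]{KLV1997}, and the text explicitly defers the proof there (``For the proof, see~\cite{KLV1997}''), so there is no in-paper argument to compare against. Your route --- collapse each square-wheel to a square $2$-face, dualize, and realize the dual as a finite-volume all-right polyhedron with the $4$-valent dual vertices ideal, via the ideal-vertex version of Andreev's theorem --- is a legitimate and essentially correct reduction, and the combinatorial translations are the right ones: flagness kills prismatic $3$-circuits, hypothesis (i) kills prismatic $4$-circuits, and hypothesis (ii) kills the octahedral degeneration. Note that the cited source appears to argue quite differently, via the Colin de Verdi\`ere number and null-space representations of the graph (which is why the paper can also quote \cite[Corollary 9.4]{KLV1997} for disjointness of the disks and \cite[Theorem 1.5]{KLV1997} in Section~\ref{s:further}); your Andreev route is shorter and more geometric given the black box, while the KLV route delivers the circle-pattern statements directly.

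Two caveats. First, as you concede, this is a reduction rather than a proof: the finite-volume Andreev theorem with ideal vertices is at least as deep as the statement being proved, and since you never state its hypotheses precisely, the ``verification'' step cannot actually be checked as written. Second, several combinatorial claims are asserted where they in fact require the flag hypothesis: that the one-interior-vertex side of a $4$-cycle is chord-free (hence genuinely a square-wheel) needs a flagness argument (a chord would create an empty $3$-cycle); ruling out two faces of the collapsed complex $L'$ sharing two edges (which would make the dual non-polyhedral) needs a combination of flagness and (ii); and the triangular-prism exception of Andreev is excluded by flagness (the dual bipyramid has an empty equatorial $3$-cycle), not by hypothesis (ii) as you suggest. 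None of these is fatal, but they are exactly where the degenerate-case analysis you defer to \cite{KLV1997} lives.
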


A geometric nerve yields an acute triangulation. This is straightforward by considering slanted cubes. Or, we can prove this explicitly as follows.

\begin{lemma}\label{lem:visual2}
Let $L$ the geometric nerve of an all-right hyperbolic polyhedron possibly with ideal vertices
and $R=ABC$ be a triangle in $L$.
If $A$ corresponds to a face, then the angle of $R$ at $A$ is acute. 
If $A$ corresponds to an ideal vertex, then the angle of $R$ at $A$ is $\pi/2$.
\end{lemma}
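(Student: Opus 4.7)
The plan is to compute the angle at $A$ by working in the link of $O$ in $P$, which is isometric to the geometric nerve via radial projection to $\partial\HHH=\SS$. I will split the nerve triangles into two types: (i) those arising from a non-ideal vertex $v$ of $P$, in which all three vertices are face-type, and (ii) those coming from a square-wheel around an ideal vertex $v_\infty$, with one ideal vertex and two face-type vertices.

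Case (i) falls under the compact visual sphere decomposition of Section~\ref{s:key}: the three fans meeting at $O$ together with the three faces of $P$ at $v$ cut out a slanted cube $Q$ with distinguished vertex $O$ and opposite vertex $v$. The nine dihedral angles of $Q$ away from $O$ are all $\pi/2$ (three at $v$ by the all-right hypothesis, six at the perpendicular cuts by construction), so Proposition~\ref{p:reflectioncube} --- equivalently Lemma~\ref{lem:visual} --- gives that the link of $O$ in $Q$, which is exactly $R$, is an acute spherical triangle.

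For case (ii), I will place coordinates in the Poincar\'e ball model with $v_\infty$ at the north pole $(0,0,1)$. The faces of $P$ meeting $v_\infty$ are represented by Euclidean spheres orthogonal to $\partial\HHH$ and passing through $v_\infty$, so their centers $c_i$ lie on the tangent plane $\{z=1\}$; the all-right orthogonality at $v_\infty$ translates to $c_i\cdot c_j=1$ for adjacent faces, i.e.\ the horizontal projections of $c_i$ and $c_j$ are perpendicular. For the angle at $A=v_\infty$, a short calculation shows that the tangent at $v_\infty$ to the great-circle arc toward the disk center $B=c_B/|c_B|$ reduces to the horizontal projection of $c_B$, so the tangents toward $B$ and $C$ are orthogonal and the angle is $\pi/2$. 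For the angle at a face-type vertex $A=c_A/|c_A|$ of such a triangle (with one of $B,C$ equal to $v_\infty$), a similar direct tangent computation shows the inner product of the tangents toward $v_\infty$ and toward $C=c_C/|c_C|$ is strictly positive, yielding an acute angle. The main obstacle will be this last sub-case: the slanted-cube framework becomes degenerate because one vertex of the would-be cube is the ideal point $v_\infty$ and Proposition~\ref{p:reflectioncube} is unavailable, so I fall back on the explicit coordinate calculation rather than a clean cube argument.
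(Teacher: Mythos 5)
Your argument is correct, but it takes a different route from the paper's. The paper gives a single uniform computation: each vertex of the geometric nerve is the center of a spherical disk (of radius $0$ if the vertex is ideal), adjacent disks meet orthogonally because $P$ is all-right, so the spherical Pythagorean theorem gives $\cos a=\cos s\cos t$, $\cos b=\cos t\cos r$, $\cos c=\cos r\cos s$ for the radii $r,s,t$ at $A,B,C$; hence $\cos a-\cos b\cos c=\cos a\,\sin^2 r\ge 0$ with equality exactly when $r=0$, and the spherical law of cosines finishes both assertions at once. You instead split into cases: for triangles dual to compact vertices you invoke the slanted-cube picture (which is precisely the alternative the paper alludes to in the sentence ``This is straightforward by considering slanted cubes''), and for the square-wheel triangles you do an explicit Euclidean computation in the ball model after rotating $v_\infty$ to the north pole. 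Your computation does check out: the centers $c_i=(u_i,1)$ satisfy $c_i\cdot c_j=1$, i.e.\ $u_i\cdot u_j=0$, the tangent at the north pole toward $c_B/|c_B|$ is $u_B$ (giving the right angle), and at a face-type vertex $A$ the inner product of the tangents toward $v_\infty$ and toward $C$ comes out to $\bigl(1/|c_C|\bigr)\bigl(1-1/|c_A|^2\bigr)>0$. One small point of care: the normalization putting $v_\infty$ at the north pole must be a rotation about the origin (an isometry of $\SS$ fixing $O$), not an arbitrary isometry of $\HHH$, since the geometric nerve depends on the position of $O$ inside $P$; with a rotation this is harmless. The trade-off is that the paper's disk-radius argument is shorter and treats ideal and non-ideal vertices uniformly, while your coordinate calculation is more elementary and makes the degeneration at the ideal vertex completely explicit.
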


\begin{proof}
Suppose $r,s,t$ are the radii of the disks (coming from the faces of a polyhedron $P$) at $A,B$ and $C$, and $a$, $b$, and $c$ are the lengths of the sides opposite to $A$, $B$, and $C$, respectively. 
Since the boundaries of the disks corresponding to $B$ and to $C$ are orthogonal,
the Spherical Law of Cosines implies that $\cos a = \cos s \cos t$.
Similarly we have $\cos b=\cos t\cos r$ and $\cos c = \cos r \cos s$.
So, $\cos a \ge \cos b\cos c$ and the equality holds if and only if $r=0$.
Since $\cos A = (\cos a - \cos b \cos c)/ (\sin b \sin c)$, the proof is complete.
\end{proof}

\begin{figure}[htb]
\begin{center}
\begin{tikzpicture}
    \colorlet{darkgreen}{green!50!black}
    \colorlet{lightgreen}{green!80!black}
    \colorlet{darkbrown}{red!30!brown!70!black}    
\draw [ultra thick] (5,0)-- (3.2,.3);
\draw [ultra thick,dashed] (5,0) -- (3.5,-.2);
\draw [ultra thick] (5,0) -- (3.7,.7);
\draw [ultra thick] (5,0) -- (4,-.7);
\draw [dashed,thick] (.5,0) -- (2.2,.2);
\draw [thick] (.5,0) -- (2.5,-.2);
\draw [thick] (.5,0) -- (2,.7);
\draw [thick] (.5,0) -- (2,-.7);
\draw [thick] (2,-.7)--(4,-.7)--(2.5,-.2)--(3.2,.3)--(2,.7)--(3.7,.7);
\draw [dashed,thick] (2,-.7)--(3.5,-.2)--(2.2,.2)--(3.7,.7);
\draw (5,0)  node [circle,inner sep=3pt,fill=purple] {};
\draw (.5,0)  node [circle,inner sep=3pt,fill=darkbrown] {};
\draw (3.7,.7) node [circle,inner sep=2pt,fill=red] {};
\draw (4,-.7) node [circle,inner sep=2pt,fill=red] {};
\draw (3.2,.3) node [circle,inner sep=2pt,fill=red] {};
\draw (3.5,-.2) node [circle,inner sep=2pt,fill=red] {};
\draw (2.2,.2) node [circle,inner sep=2pt,fill=blue] {};
\draw (2.5,-.2) node [circle,inner sep=2pt,fill=blue] {};
\draw (2,-.7) node [circle,inner sep=2pt,fill=blue] {};
\draw (2,.7) node [circle,inner sep=2pt,fill=blue] {};
\draw (5,0)  node [right=5pt] {\small $\infty$};
\draw (.5,0)  node [left=5pt] {\small $O$};
\end{tikzpicture}
\end{center}
\caption{Octahedron cut out by a visual sphere decomposition.}\label{fig:quadhyp}
\end{figure}
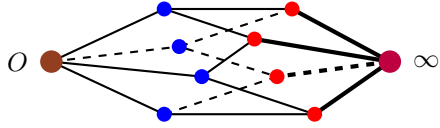

\subsection{Triangulations of planar surfaces}
Throughout this section, we let $L$ be an abstract triangulation of a planar surface.
Here,
a \emph{planar surface} means a surface obtained from $\mathbb{S}^2$ by removing finitely many (possibly zero) open disks.

A 3-cycle in $L^{(1)}$ is called \emph{empty} if it is not the boundary of a face in $L$.
Recall that an abstract triangulation $L$ is \emph{acute in $\mathbb{S}^2$ or in $\EE$} if $L$ can be realized as an acute triangulation of a subset of $\mathbb{S}^2$ or $\EE$, respectively.
%Such a triangulation $L$ is flag if and only if $L$ has no empty triangle and $L^{(1)}\ne K_4$.
%A separating triangle is empty by definition. 
The proof of the following is elementary and similar to~\cite{Maehara2003}, where Euclidean cases are considered.

\begin{lemma}\label{lem:empty}
If there is an empty 3-cycle which contains no edge in $\partial L$,
then $L$ is not acute in $\mathbb{S}^2$.
\end{lemma}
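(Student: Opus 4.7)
The plan is to argue by contradiction. Suppose $L$ is realized as an acute triangulation $T$ of a subset of $\SS$, so the empty 3-cycle $C$, with vertices $v_1,v_2,v_3$ and edges $e_1,e_2,e_3$, becomes a geodesic spherical triangle $\tilde{C}$ in $\SS$. By Corollary~\ref{cor:acute} each $e_i$ has length strictly less than $\pi/2$, so the three vertices lie in a common open hemisphere and $\tilde{C}$ bounds two open disks $D,D'\subset\SS$. Let $D$ be the convex side, so every interior angle $\alpha_i$ of $D$ satisfies $\alpha_i<\pi$ and $\mathrm{Area}(D)<2\pi$.

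Because no edge of $\tilde{C}$ lies in $\partial T$, each $e_i$ is flanked by a face $F_i^D$ on the $D$-side and $F_i^{D'}$ on the $D'$-side. Emptiness of $C$ forces $F_1^D,F_2^D,F_3^D$ to be pairwise distinct: a coincidence $F_i^D=F_j^D$ for $i\ne j$ would give a face containing $e_i$ and $e_j$, whose third edge would have to be $e_k$, making the face equal $\tilde{C}$ itself. The same argument applies on the $D'$-side. Consequently, at each vertex $v_i$ at least four distinct face-sectors of $T$ meet (those of $F_{i-1}^D$, $F_{i+1}^D$, $F_{i-1}^{D'}$, and $F_{i+1}^{D'}$), each contributing an angle strictly less than $\pi/2$ by acuteness. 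The total face-angle at $v_i$ is therefore strictly less than $2\pi$. Since an interior vertex of $T$ carries exactly $2\pi$, this rules out any interior $v_i$, and all three must lie in $\partial L$, each carrying a nontrivial gap on exactly one side of $\tilde{C}$.

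The remaining step handles the boundary case, mirroring Maehara's Euclidean argument~\cite{Maehara2003}. I would first show that the gap at each $v_i$ cannot lie on the $D$-side: in that case the gap-free $D'$-side at $v_i$ still hosts the two distinct acute faces $F_{i-1}^{D'}$, $F_{i+1}^{D'}$, and a spherical-excess computation (applying $\beta_{i,i-1}+\beta_{i,i+1}>\pi/2$ to the acute face $F_i^D$, together with the analogous inequality on the $D'$-side) combined with the convexity bound $\alpha_i<\pi$ forces a contradiction. With every gap on the $D'$-side, the convex region $D$ is tiled entirely by acute faces of $T$ without any new boundary vertices on $\tilde{C}$. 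An Euler count then gives $|F|=2n+1$ interior faces and $|E|=3n+3$ interior edges for $n\ge 1$ interior vertices, and the acuteness-forced degree bound $\ge 5$ at each interior vertex excludes all small values of $n$ directly; the spherical constraint that every subdivision edge has length strictly less than $\pi/2$ handles the remaining configurations.

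The main obstacle will be carrying out this boundary-case analysis uniformly: the strict ``four acute angles sum to less than $2\pi$'' contradiction available at interior vertices no longer closes once nontrivial gaps absorb some of the slack, and one must use the sharper spherical-excess inequalities together with combinatorial control over how many interior edges of $T$ emanate from each $v_i$. The most delicate sub-case occurs when $D$ contains additional holes coming from boundary components of $L$ embedded inside it, which must be accommodated in the final Euler-plus-degree bookkeeping.
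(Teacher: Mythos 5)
There is a genuine gap at the central step of your argument. You claim that because at least four distinct face-sectors (those of $F_{i-1}^D$, $F_{i+1}^D$, $F_{i-1}^{D'}$, $F_{i+1}^{D'}$) meet at $v_i$, each with angle strictly less than $\pi/2$, the total face-angle at $v_i$ is strictly less than $2\pi$, ruling out interior vertices. This is a non-sequitur: ``at least four'' sectors, each of angle less than $\pi/2$, bounds the total only from below by nothing and from above by nothing useful --- if six or seven faces meet at $v_i$, their angles can sum to exactly $2\pi$ with every one of them acute. Indeed, acuteness forces every interior vertex of $T$ to have degree at least five, so the situation you are trying to exclude is the generic one; the vertices of an empty $3$-cycle in a flag triangulation of $\SS$ are typically interior. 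Since the conclusion ``all three $v_i$ lie on $\partial L$'' fails, the entire boundary-case analysis that follows is aimed at the wrong configuration; moreover that analysis is itself only a plan (``a spherical-excess computation \dots forces a contradiction,'' ``handles the remaining configurations'') and does not constitute a proof.

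For comparison, the paper's argument is a short global area computation that never localizes at vertices. Let $X$ be the component of $\SS\setminus C$ with $\area(X)\le\area(\SS\setminus X)$. Since the edges of $C$ have length less than $\pi/2$ (Proposition~\ref{p:reflectioncube}, Corollary~\ref{cor:acute}), one gets $\area(X)<\pi/2$. On the other hand, the three distinct acute faces $R_1,R_2,R_3$ inside $X$ adjacent to the edges of $C$ satisfy, by spherical excess, $\sum_i(\pi-\alpha_i)\le \pi+\area(X)$, where $\alpha_i$ is the angle of $R_i$ at its vertex off $C$; acuteness gives $\sum_i(\pi-\alpha_i)>3\pi/2$ while $\pi+\area(X)<3\pi/2$, a contradiction. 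If you want to salvage your approach, you would need to replace the vertex-degree argument with an inequality of this global type; the local angle count at the $v_i$ cannot work.
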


\begin{proof}
Suppose $L$ is realized as an acute triangulation in $\mathbb{S}^2$
and $C$ is an empty 3-cycle such that for each edge $e$ of $C$, there exist two faces in $L$ sharing $e$.
Choose a component $X$ of $\mathbb{S}^2\setminus C$ such that $\area(X)\le\area(\mathbb{S}^2\setminus X)$.
By Corollary~\ref{cor:acute}  the edge lengths of $C$ are less than $\pi/2$.
This implies that the area of $X$ is at most $\pi/2$,
since there exists an all-right spherical triangle containing $X$.
On the other hand, let $R_1,R_2,R_3$ be the acute triangles contained in $X$ such that each $R_i$ shares an edge with $C$.
Let $\alpha_i$ denote the angle of the vertex of $R_i$ not lying on $C$.
We let $\beta_1,\beta_2,\ldots,\beta_6$ be the interior angles of $R_1,R_2,R_3$ that are not $\alpha_1,\alpha_2$ and $\alpha_3$.
If $M$ denotes the sum of the three interior angles of $X$,
then \[M\ge\sum_{i=1}^6 \beta_i >\sum_{i=1}^3 (\pi-\alpha_i).\]
So we have
\[
\pi+\area(X)=M > \sum_{i=1}^3 (\pi-\alpha_i) >3\pi/2.\] This is a contradiction
to $\area(X)\le \pi/2$.
%Since the interior angle sum of $X$ is at least $\sum_i(\pi-\alpha_i)$, we have
%\[\pi+\area(X)=\ge\sum_i (\pi-\alpha_i) >3\pi/2$. This is a contradiction.
\end{proof}

So, an empty 3-cycle is sometimes an obstruction to acute triangulation. 
%From now on, we will only consider the case that $L$ has no empty 3-cycle in order to avoid technicality. 
We say that a cycle $C$ is $L$ is \emph{separating} if each component of $L\setminus C$ contains a vertex of $L$.
We say $L$ is \emph{flag-no-separating-square} if $L$ is flag and has no separating 4-cycle.
Note that the square-wheel is flag-no-separating-square.
% and each interior vertex of $L$ has degree at least five. This last condition is to exclude the square-wheel.

\begin{lemma}\label{lem:maehara cap}
If $L$ is flag-no-separating-square, then there exists 
a flag-no-separating-square triangulation $L'$ containing $L$ such that
$L'$ is still a planar surface and each boundary component of $L'$ is a 4-cycle.
\end{lemma}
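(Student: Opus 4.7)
My plan is to attach a triangulated collar annulus to each boundary component of $L$ whose length is not $4$. Since the boundary components of $L$ are pairwise disjoint and lie in disjoint open disks of $\SS\setminus L$, I will perform the attachments independently and set $L':=L\cup\bigcup_C A_C$.

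Let $C=v_1v_2\cdots v_n$ be a boundary component with $n\ne 4$. I will construct a triangulated annulus $A_C$ with inner boundary $C$ and outer boundary a $4$-cycle $w_1w_2w_3w_4$, subject to two design requirements:
\begin{enumerate}[(a)]
\item every interior vertex of $A_C$ that is adjacent to two vertices of $C$ is adjacent to two \emph{consecutive} vertices of $C$;
\item $A_C$ itself contains no empty $3$-cycles and no separating $4$-cycles.
\end{enumerate}
For $n\ge 5$ this can be arranged with a single intermediate layer $u_1,\dots,u_n$ in which $u_i$ is adjacent to $v_i$, $v_{i+1}$, $u_{i-1}$, and $u_{i+1}$ (indices mod $n$), followed by a fan-type contraction from the $u$-cycle to the target $4$-cycle. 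For $n=3$ a single intermediate layer would still yield a $3$-cycle, so at least two layers are needed.

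The main task is to verify that $L'$ inherits the flag no-separating-square property. Flag-ness is the easier half: any $3$-cycle of $L'$ is either entirely in $L$ (a face by hypothesis), entirely in some $A_C$ (a face by construction), or has one vertex in $L\setminus C$ and another in $A_C\setminus C$, which is impossible because no edge of $L'$ joins these two sets. The harder part is ruling out separating $4$-cycles. Let $\gamma$ be a $4$-cycle of $L'$. If $\gamma\subset L$, then the empty side of $\gamma$ in $L$ contains no vertex of $L\setminus\gamma$; in particular it contains no vertex of $\partial L$, so no complementary disk of $L$ in $\SS$ is adjacent to that side, and the attached annuli contribute no vertices there. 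Hence $\gamma$ remains non-separating in $L'$. If $\gamma\subset A_C\cup C$, condition (b) handles it. Otherwise $\gamma$ must take the cyclic form $x\,u\,y\,u'$ with $x\in L\setminus C$, $y\in A_C\setminus C$, and $u,u'\in C$, since $x$ and $y$ cannot be adjacent. Condition (a) then forces $u$ and $u'$ to be consecutive on $C$, so $uu'$ is a chord of $\gamma$; flag-ness of $L$ implies that $xuu'$ is the unique face of $L$ containing the boundary edge $uu'$, while $uyu'$ is a face of $A_C$ by construction. Thus $\gamma$ bounds the union of these two triangular faces in $L'$ and is non-separating.

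The real obstacle will be in establishing (a) and (b) for the small cases $n=3$ and $n=5$ explicitly; the constructions sketched above need to be written down carefully and the finite list of short cycles introduced in each annulus verified by inspection. This bookkeeping is routine but slightly tedious, and it is where the bulk of the actual work lies.
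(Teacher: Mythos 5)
Your overall strategy---attach a triangulated collar to each boundary component of length $\ne 4$ and verify that the gluing preserves the flag no-separating-square property---is a genuinely different route from the paper's, which instead glues Maehara's explicit triangulation of the $n$-gon by $9n$ triangles (the ``Maehara cap'' of Figure~\ref{fig:maehara}(a)) onto each boundary component of length at least $5$, thereby capping those components off entirely rather than reducing them to $4$-cycles. Unfortunately your version has a genuine gap, and the gadget you propose actually fails on a small example. The source of the trouble is that a boundary component $C$ of $L$ may have chords inside $L$. Take $L$ to be the pentagon $v_1v_2v_3v_4v_5$ triangulated by the two chords $v_1v_3$ and $v_1v_4$: this is a flag no-separating-square triangulation of a disk whose boundary is a $5$-cycle. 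Attaching your single antiprism layer $u_1,\dots,u_5$ with $u_i\sim v_i,v_{i+1}$ creates the $4$-cycle $u_1\,v_1\,v_3\,u_2$ (its edges are $u_1v_1$, the chord $v_1v_3$ of $L$, $v_3u_2$, and $u_2u_1$). This cycle has no chord, since $u_1\not\sim v_3$ and $u_2\not\sim v_1$, and both complementary components contain vertices: one side contains $v_2$, the other contains $v_4,v_5,u_3,u_4,u_5$ and everything beyond. So $L'$ acquires a separating square.

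Your case analysis misses exactly this configuration: a $4$-cycle all of whose \emph{vertices} lie in $A_C$ but which uses an edge of $L$ joining two non-consecutive vertices of $C$. Such a cycle is not contained in $L$, is not a cycle of the complex $A_C$ (so condition (b) says nothing about it), and does not have the form $x\,u\,y\,u'$ with $x\in L\setminus C$. Hence conditions (a) and (b) are not sufficient as stated. To repair the collar approach you would need a further condition: for every edge $yy'$ between interior vertices of the layer adjacent to $C$, the union of the $C$-neighbourhoods of $y$ and $y'$ must consist of at most two \emph{consecutive} vertices of $C$. This is precisely what the Maehara cap arranges---each boundary vertex $x_i$ has its own interior neighbours $y_i,z_i$, the first interior layer alternates $z_1y_1z_2y_2\cdots$, and no two $z$'s are adjacent---which is why the paper's gluing survives boundary chords while your antiprism layer does not.
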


\begin{proof}
In~\cite{Maehara2003}, Maehara considered a certain triangulation of an $n$--gon by $9n$ triangles; this triangulation is shown in Figure~\ref{fig:maehara} for the case $n=5$. Let us call this triangulation as the \emph{Maehara cap} for an $n$--gon.
For each boundary cycle of length $n\ge5$ in $L$, we glue the Maehara cap for an $n$--gon.
It is elementary to check that the resulting complex is flag-no-separating-square.
\end{proof}
\begin{figure}[t]
\begin{center}
\subfloat[(a)]{
  \tikzstyle {a}=[red,postaction=decorate,decoration={%
    markings,%
    mark=at position .5 with {\arrow[red]{stealth};}}]
  \tikzstyle {b}=[blue,postaction=decorate,decoration={%
    markings,%
    mark=at position .43 with {\arrow[blue]{stealth};},%
    mark=at position .57 with {\arrow[blue]{stealth};}}]
  \tikzstyle {v}=[draw,circle,fill=black,inner sep=1pt]
   \tikzstyle {w}=[draw,circle,fill=white,inner sep=1pt]
\begin{tikzpicture}[thick]
\foreach \i in {0,...,4} % Pentagon
    	\draw [ultra thick] (360/5*\i+90:2)--(360/5*\i+360/5+90:2) node (x\i) [v] {};
	%    	\draw [] (360/5*\i+90:.7)--(360/5*\i+360/5+90:.7) node (x\i) [v] {};
\foreach \i in {0,...,4}{
\node [v] at (360/10*2*\i+90+360/10+360/10:1.2) (y\i) {};
\node [v] at (360/10*2*\i+90+360/10:1.2) (z\i) {};
\draw (y\i)--(x\i)--(z\i);
\draw (z\i)--(360/5*\i+90:2);
\draw (y\i)--(z\i);
\draw (y\i)--(360/10*2*\i+90+3*360/10:1.2);
}

\foreach \i in {0,...,4}{
\node [v] at (360/10*2*\i+90+360/10+360/20:0.7) (Y\i) {};
\node [v] at (360/10*2*\i+90+360/20:0.7) (Z\i) {};
\draw (Y\i)--(Z\i);
\draw (Y\i)--(y\i);
\draw (z\i)--(Z\i);
\draw (Z\i)--(360/10*2*\i+90-360/10+360/10:1.2);
\draw (Y\i)--(z\i);
\draw (Y\i)--(360/10*2*\i+90+5*360/20:.7);
\node [v] at (0:0) (z) {};
\draw (Y\i)--(z)--(Z\i);
}
%\node [] at (0,0) (c) {$L$};
% \foreach \i in {0,...,4} {
%  }
\node  [] at (2.33,0) {};  % hidden point to match 2 figures
\end{tikzpicture}
}
\qquad
\subfloat[(b)]{
  \tikzstyle {a}=[red,postaction=decorate,decoration={%
    markings,%
    mark=at position .5 with {\arrow[red]{stealth};}}]
  \tikzstyle {b}=[blue,postaction=decorate,decoration={%
    markings,%
    mark=at position .43 with {\arrow[blue]{stealth};},%
    mark=at position .57 with {\arrow[blue]{stealth};}}]
  \tikzstyle {v}=[draw,circle,fill=black,inner sep=1pt]
   \tikzstyle {w}=[draw,circle,fill=white,inner sep=1pt]
\begin{tikzpicture}[thick]
\foreach \i in {0,...,3}{ % square
    	\draw [ultra thick] (360/4*\i+90+45:2)--(360/4*\i+360/4+90+45:2) node (x\i) [v] {};
	%    	\draw [] (360/5*\i+90:.7)--(360/5*\i+360/5+90:.7) node (x\i) [v] {};
	\node [v] at (0:0) (z) {};
	\draw (x\i)--(z);
}
%\node [] at (0,0) (c) {$L$};
% \foreach \i in {0,...,4} {
%  }
\node  [] at (2,0) {};  % hidden point to match 2 figures
\end{tikzpicture}
}
\end{center}
\caption{(a) Maehara cap for a pentagon. (b) Square--wheel.}\label{fig:maehara}
\end{figure}
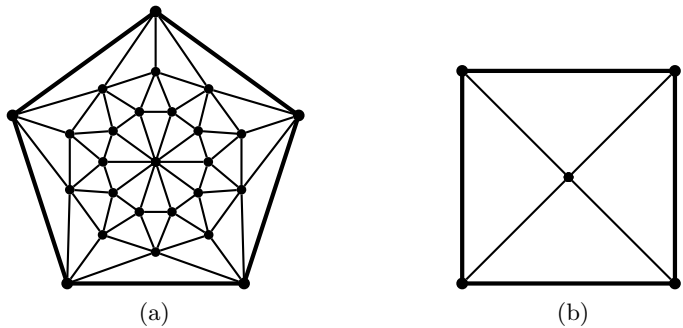

Now we prove Theorem~\ref{th:planar} (1).

\begin{theorem}\label{thm:planar1}
If $L$ is flag-no-separating-square,
then $L$ can be realized as an acute triangulation in $\mathbb{S}^2$.
\end{theorem}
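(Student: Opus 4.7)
My plan is to extend $L$ to a triangulation of $\SS$ to which the ideal-vertex version of Andreev's theorem (Theorem~\ref{thm:idealallright}) applies, and then read off the desired acute realization from the visual sphere decomposition.

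First I would invoke Lemma~\ref{lem:maehara cap} to enlarge $L$ to a flag-no-separating-square planar triangulation $L'$ whose every boundary component is a $4$-cycle $B_j$. Next I would form an abstract triangulation $L''$ of $\SS$ by capping each $B_j$ with a \emph{square-wheel}: a new central vertex $c_j$ joined by four spokes to the corners of $B_j$, splitting the bounded complementary disk into four triangles. If $L''$ satisfies the hypotheses of Theorem~\ref{thm:idealallright}, that theorem will furnish an all-right hyperbolic polyhedron $P$ with one ideal vertex per $c_j$ and combinatorial nerve $L''$. Placing the origin of the Poincar\'e ball model inside $P$ and taking the visual sphere decomposition produces a geodesic realization $T$ of $L''$ on $\partial \HHH = \SS$. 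By Lemma~\ref{lem:visual2} the angles of $T$ at every non-ideal vertex are acute, while the angles at each $c_j$ are exactly $\pi/2$. Since $L \subseteq L'$ contains none of the $c_j$'s, every triangle of $L$ is realized in $T$ with all three angles acute, and restricting $T$ to the subcomplex corresponding to $L$ exhibits $L$ as an acute geodesic triangulation of a subset of $\SS$.

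The work that remains --- and the main obstacle --- is to verify the two hypotheses of Theorem~\ref{thm:idealallright} for $L''$. Condition~(ii) is easy: the only degree-$4$ vertices of $L''$ are the centers $c_j$, whose only neighbors lie on $B_j$; each corner of $B_j$ has at least one interior neighbor in $L'$ and hence degree at least $5$ in $L''$. Condition~(i), that every $4$-cycle in $L''$ bound a disk in $\SS$ with exactly one interior vertex, is the delicate step. The caps $B_j$ satisfy it by construction with interior vertex $c_j$. To rule out the remaining candidates --- $4$-cycles lying entirely inside $L'$, or $4$-cycles that traverse one or two spokes $c_j x_i^j$ --- I would use the flag-no-separating-square condition on $L'$ together with the explicit combinatorics of the Maehara cap (Figure~\ref{fig:maehara}); in particular, the cap is arranged so that no two opposite corners of any $B_j$ share a common neighbor in the interior of $L'$, which precludes mixed $4$-cycles passing through $c_j$. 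Granting condition~(i), the rest of the argument proceeds exactly as outlined above.
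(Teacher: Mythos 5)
Your plan follows the paper's proof essentially verbatim: reduce via Lemma~\ref{lem:maehara cap} to the case where every boundary component is a $4$-cycle, cap each such component with a square-wheel to obtain a triangulation of $\SS$, apply Theorem~\ref{thm:idealallright} to get an all-right hyperbolic polyhedron, and read off the acute realization from its geometric nerve via Lemma~\ref{lem:visual2}. The verification of hypothesis~(i) of Theorem~\ref{thm:idealallright} that you flag as the remaining obstacle is exactly what the paper absorbs into Lemma~\ref{lem:maehara cap} and the phrase ``combinatorial nerve'' (so you are, if anything, being more careful than the published argument --- though your appeal to the combinatorics of the Maehara cap should also cover boundary $4$-cycles that were already present in $L$ rather than introduced by the caps); the degenerate case $L\approx\SS$ is routed through Theorem~\ref{th:main racg} in the paper.
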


\begin{proof}
By Lemma~\ref{lem:maehara cap}, we may assume that either $L\approx\mathbb{S}^2$ or $\partial L$ is a nonempty union of 4-cycles.
The former case is Theorem~\ref{th:main racg}.
For the latter, Theorem~\ref{thm:idealallright} implies that $L$ is the combinatorial nerve of an all-right hyperbolic polyhedron $P$.
The geometric nerve of $P$ is a desired acute triangulation by Lemma~\ref{lem:visual2}.
\end{proof}

Now let us consider triangulations in $\EE$.
Theorem~\ref{thm:maehara} asserts that an abstract triangulation $L$ of an $n$-gon for $n\ge5$ is acute in $\EE$ if and only if $L$ is flag-no-separating-square. As Maehara hinted in~\cite{Maehara2003}, an alternative account of this fact can be given by applying the Koebe--Andreev--Thurston theorem, which is similar to Theorem~\ref{thm:idealallright}.
We pursue this alternative approach to strengthen the result of Maehara.

We say that a geodesic triangulation $L$ in $\mathbb{S}^2$ or in $\EE$ has \emph{coinciding perpendiculars} if for each pair of neighboring faces $ABC$ and $A'BC$ of $L$, the feet of the perpendiculars from $A$ and $A'$ to $BC$ coincide. Let us give the proof of Theorem~\ref{th:planar} (2).

\begin{theorem}\label{thm:euc}
Suppose $L$ is flag-no-separating-square.
Then $L$ can be realized as an acute triangulation in $\EE$ if and only if 
at least one boundary component of $L$ is not a 4-cycle.
Furthermore, such an acute triangulation can be chosen to have coinciding perpendiculars.
\end{theorem}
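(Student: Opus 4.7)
The plan is to adapt the $\SS$-case argument of Theorem~\ref{thm:planar1} by sending one ideal vertex of the resulting all-right hyperbolic polyhedron to $\infty$ in the upper half-space model, thereby reading off an orthogonal Euclidean circle packing realizing $L$. The hypothesis that some boundary component is not a $4$-cycle is precisely what supplies a ``spare'' ideal vertex that produces a valid placement of every vertex of $L$ at a finite point of $\EE$.

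For the backward direction, first I would apply Lemma~\ref{lem:maehara cap} to extend $L$ to a flag-no-separating-square triangulation $L_1$ in which every boundary component is a $4$-cycle, with at least one of these $4$-cycles—the inner rim of a Maehara cap—arising from the capping of a non-$4$-cycle boundary of $L$. Next I would close $L_1$ to a triangulation $L_2\approx\SS$ by attaching a square-wheel to each boundary $4$-cycle. The complex $L_2$ is flag, each induced $4$-cycle of $L_2^{(1)}$ bounds a region with exactly one interior vertex, and no two degree-$4$ vertices are adjacent, so Theorem~\ref{thm:idealallright} produces an all-right hyperbolic polyhedron $P$ whose combinatorial nerve is $L_2$, with degree-$4$ vertices of $L_2$ identified with the ideal vertices of $P$. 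Let $v_*$ be the ideal vertex of $P$ associated with the square-wheel sitting inside a Maehara cap (which exists by hypothesis), and pass to the upper half-space model with $v_*$ at $\infty$. The four faces of $P$ meeting $v_*$ become vertical half-planes; every remaining face of $P$ is bounded by a hemisphere whose equator is a Euclidean circle $\partial D_v\subset\EE$ of radius $r_v$ centered at $c_v$, indexed by the corresponding non-ideal vertex $v$ of $L_2$. Since no vertex of $L$ corresponds to an ideal vertex of $P$ or to a corner of the distinguished $4$-cycle around $v_*$, every $v\in L^{(0)}$ receives a well-defined center $c_v\in\EE$; I place $v$ at $c_v$ and join adjacent vertices of $L$ by Euclidean segments.

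All dihedral angles of $P$ are $\pi/2$, so $D_u$ and $D_v$ meet orthogonally whenever $\{u,v\}\in L^{(1)}$, which gives the Pythagorean identity $|c_u c_v|^2=r_u^2+r_v^2$. For each face $\{u,v,w\}$ of $L$ the Law of Cosines then yields
\[
\cos\bigl(\angle\text{ at }c_u\bigr)=\frac{r_u^2}{|c_u c_v|\,|c_u c_w|}>0,
\]
and similarly at $c_v$ and $c_w$, so every face is acute. The same computation shows that the foot of the perpendicular from $c_w$ onto $c_u c_v$ lies at distance $r_u^2/|c_u c_v|$ from $c_u$, depending only on $\{u,v\}$, which gives coinciding perpendiculars. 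The forward direction is an Euclidean analogue of Lemma~\ref{lem:empty} in the spirit of the forward direction of Maehara's Theorem~\ref{thm:maehara}: assuming every boundary component is a $4$-cycle, one selects a boundary $4$-cycle $C$ and, combining the Euler-characteristic count with the fact that each angle incident to $C$ is strictly less than $\pi/2$, forces some triangle meeting $C$ to violate acuteness.

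The subtlest step I anticipate is verifying that the assignment $v\mapsto c_v$ extends to a genuine simplicial embedding of $L$ into $\EE$ rather than merely a locally acute but possibly self-overlapping picture. Acuteness of each individual triangle is a one-line calculation, but global embeddedness requires appealing to the rigidity supplied by Theorem~\ref{thm:idealallright} (a Koebe--Andreev--Thurston style uniqueness): the orthogonal disk packing $\{D_v\}$ is determined by the combinatorics of $L_2$, which in turn forces the centers $\{c_v\}$ to be laid out in the correct planar order so that no two faces of $L$ overlap.
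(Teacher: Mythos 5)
Your backward direction follows the same broad strategy as the paper---extend $L$ to a triangulation of $\SS$, apply Theorem~\ref{thm:idealallright} to obtain an all-right hyperbolic polyhedron carrying an orthogonal disk pattern, push that pattern conformally to $\EE$, and take centers of the resulting circles---but the projection point you choose does not exist. The Maehara cap of Lemma~\ref{lem:maehara cap} is a \emph{disk}, not an annulus: as Figure~\ref{fig:maehara}~(a) shows, it has a single central vertex of high degree joined to an inner ring, so capping a non-$4$-cycle boundary component of $L$ produces no ``inner rim'' $4$-cycle, hence no square-wheel and no ideal vertex of $P$ there. In the extreme case that every boundary component of $L$ has length at least $5$, your $L_2$ receives no square-wheels at all and $P$ can be compact, so there is no ideal vertex $v_*$ to send to $\infty$. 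The only ideal vertices that can arise come from square-wheels glued to genuine $4$-cycle boundary components of $L$; but such an ideal vertex is adjacent in $L_2$ to four vertices of $L$, so in your upper half-space picture the four corresponding faces become vertical half-planes and those four vertices of $L$ receive no centers. Either way the step ``every $v\in L^{(0)}$ receives a well-defined center'' fails.

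The paper's fix is to project not from an ideal vertex but from a point $x_\infty$ chosen in the interior of the face-disk $D_p$, where $p$ is the (high-degree, non-ideal) central vertex of a Maehara cap glued to a non-$4$-cycle boundary component of $L$. Since $p$ is separated from $\partial L$ by two rings of cap vertices, it is adjacent to no vertex of $L$, and \cite[Corollary 9.4]{KLV1997} then guarantees that $D_p$ is disjoint from every $D_v$ with $v\in L^{(0)}$; stereographic projection from $x_\infty$ therefore sends all of these to bounded, pairwise-orthogonal disks in $\EE$. If you want to keep your version, you would have to replace the Maehara cap by an annular flag-no-separating-square cap running from the $n$-gon down to a $4$-cycle, glue a square-wheel inside it, and re-verify the hypotheses of Theorem~\ref{thm:idealallright} for the resulting sphere triangulation---none of which is done in your write-up. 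Your orthogonality and acuteness computations, the coinciding-perpendiculars formula $|c_uH|=r_u^2/|c_uc_v|$ (which is exactly Lemma~\ref{l:euclidean perpendiculars}), and the forward direction deferred to Maehara are all consistent with the paper; your closing remark that global embeddedness deserves justification is a fair observation that the paper also leaves implicit.
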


\begin{proof}
Suppose $L$ is embedded in $\EE$ as a geodesic triangulation
in such a way that the outer boundary component $Q$ is a 4-cycle.
Then the four edges of $Q$ are shared by four triangles $R_1,R_2,R_3,R_4$ in $L$.
It is elementary to see that these four triangles cannot be all acute; see the proof of~\cite[Lemma 1]{Maehara2003}.

For the converse, 
%A \emph{square--wheel} is the flag complex whose $1$--skeleton is obtained from a square by adding a vertex $v$ to a square and joining $v$ to each vertex of the square.
%For each $n$--cycle in $\partial L$, we glue a Maehara cap if $n\ge5$ and a \emph{square--wheel} if $n=4$; see Figure~\ref{fig:maehara}.
 embed $L$ into some $L'$ such that $L'$ is flag-no-separating-square and each boundary component of $L'$ is a square (Lemma~\ref{lem:maehara cap}). We further glue square-wheels to each boundary component of $L'$, to get $L''$.
By Theorem~\ref{thm:idealallright}, we can realize $L''$ as the geometric nerve of an all-right hyperbolic polyhedron $P$.
For each vertex $v$ of $L''$, let us denote by $D_v\subseteq\mathbb{S}^2$ the disk corresponding to $v$ as in the definition of the geometric nerve. If $v$ has degree four, then $D_v$ is understood to be simply $v$ itself.
Since $L$ has a boundary component which is not a 4-cycle, we have glued some Maehara cap to $L$ in $L'$.
We choose a point $x_\infty$ inside the disk corresponding to the center vertex $p$ of this Maehara cap.
We mentioned that $D_p$ does not intersect $D_v$ for $v\in L^{(0)}$~\cite[Corollary 9.4]{KLV1997}.
By stereographically projecting the disks corresponding to the vertices in $L$ from the viewpoint $x_\infty$, we have orthogonally intersecting closed disks in $\EE$.
The centers of the disks in $\EE$ will form a desired acute triangulation of $L$.
Here we used an elementary fact that the centers of three pairwise orthogonally intersecting circles in $\EE$
form an acute triangle.
The ``furthermore'' part is a consequence of the following lemma, the proof of which is elementary.
\end{proof}

\begin{lemma}\label{l:euclidean perpendiculars}
Let $C_1,C_2,C_3$ be orthogonally intersecting circles in $\EE$ whose centers and radii are $P_1,P_2,P_3$ and $r_1,r_2,r_3$, respectively. Let $H$ be the foot of the perpendicular from $P_1$ to $P_2 P_3$. Then $P_2H:HP_3=r_2^2:r_3^2$.
\end{lemma}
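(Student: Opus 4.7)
The plan is to use only the orthogonality relation $|P_iP_j|^2 = r_i^2+r_j^2$ (which characterizes orthogonally intersecting circles) together with two applications of the Pythagorean theorem. Set $x=P_2H$, $y=HP_3$, and $h=|P_1H|$. Since $P_1H\perp P_2P_3$, the right triangles $P_1HP_2$ and $P_1HP_3$ give
\[
h^2+x^2=|P_1P_2|^2=r_1^2+r_2^2,\qquad h^2+y^2=|P_1P_3|^2=r_1^2+r_3^2.
\]
Subtracting yields $x^2-y^2=r_2^2-r_3^2$.

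Next I would verify that $H$ lies on the segment $P_2P_3$ so that $x+y=|P_2P_3|=\sqrt{r_2^2+r_3^2}$. This amounts to checking that the angles at $P_2$ and $P_3$ in the triangle $P_1P_2P_3$ are acute. The law of cosines confirms this directly: for the angle at $P_2$,
\[
\cos\angle P_2 = \frac{|P_1P_2|^2+|P_2P_3|^2-|P_1P_3|^2}{2\,|P_1P_2|\,|P_2P_3|}=\frac{r_2^2}{|P_1P_2|\,|P_2P_3|}>0,
\]
and symmetrically at $P_3$. Combining $(x-y)(x+y)=r_2^2-r_3^2$ with $x+y=\sqrt{r_2^2+r_3^2}$ gives $x-y=(r_2^2-r_3^2)/\sqrt{r_2^2+r_3^2}$, so
\[
x=\frac{r_2^2}{\sqrt{r_2^2+r_3^2}},\qquad y=\frac{r_3^2}{\sqrt{r_2^2+r_3^2}},
\]
and therefore $P_2H:HP_3=r_2^2:r_3^2$, as claimed.

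Since the lemma is labeled as having an elementary proof, there is no real obstacle; the computation above is essentially the whole argument. The only conceptual point is the translation of pairwise orthogonality of the circles into the algebraic identity $|P_iP_j|^2=r_i^2+r_j^2$, which then makes the Pythagorean subtraction collapse to the clean ratio.
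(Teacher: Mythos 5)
Your proof is correct and complete: the paper itself omits the argument (declaring it elementary), and what you give is exactly the natural computation one would expect — translating pairwise orthogonality into $|P_iP_j|^2=r_i^2+r_j^2$, applying the Pythagorean theorem twice, and checking via the law of cosines that $H$ lies strictly between $P_2$ and $P_3$. No gaps.
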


\begin{remark}\label{rem:coinciding}
The acute triangulation of $\mathbb{S}^2$ in Theorem~\ref{th:main racg} can be chosen to satisfy coinciding perpendiculars property as well, by a similar argument.
\end{remark}

\begin{proposition}  \label{p:hemi}
There exist acute triangulations of a hemisphere. 
\end{proposition}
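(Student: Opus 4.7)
Here is my plan. I would exhibit an acute triangulation of $\SS$ that is invariant under a reflection $\phi$ through a great circle $\Gamma$ lying entirely in its $1$-skeleton, so that either closed hemisphere bounded by $\Gamma$ inherits the structure of an acute spherical triangulation.

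Take $L_0$ to be the icosahedral triangulation of $\SS$, pick any vertex $v_0$, and let $L$ be the triangulated disk obtained from $L_0$ by removing $v_0$ together with the five edges and five triangles meeting it; its boundary $\partial L$ is the pentagonal cycle $\link(v_0)$. Form the double $DL = L \cup_{\partial L} L'$ and let $\sigma\co DL \to DL$ be the involution swapping the two copies. The key combinatorial step is to verify that $DL$ is flag no-square. Since $L$ is the full subcomplex of $L_0$ on $L_0^{(0)}\setminus\{v_0\}$, flagness and the no-chord-less-$4$-cycle property are inherited by $L$ from $L_0$. The only new $4$-cycles in $DL$ are ``crossing'' cycles $a,b,c,d$ with $a$ interior to $L$, $c$ interior to $L'$, and $b,d \in \partial L$; such a cycle is chord-less precisely when $b$ and $d$ are non-adjacent on the pentagonal boundary. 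A direct check of the icosahedron shows that every vertex other than $v_0$ has at most two neighbors on $\link(v_0)$, and when it has two they are always consecutive on the $5$-cycle, so no chord-less crossing cycle occurs.

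By Corollary~\ref{cor:allright}, $DL$ is combinatorially dual to the boundary of an all-right hyperbolic polyhedron $P$, which is unique up to isometry of $\HHH$. The combinatorial involution $\sigma$ therefore lifts to an isometric involution $\tilde\sigma$ of $P$, which must be a reflection in a hyperbolic plane $\Pi$, since orientation-reversing involutions of $\HHH$ with non-empty fixed set are reflections. I would place $P$ in the Poincar\'e ball model so that the origin $O$ lies in $\Pi\cap\mathrm{int}(P)$, which is possible because $\Pi$ is a symmetry plane of $P$. The visual sphere decomposition at $O$ then gives an acute spherical triangulation $T$ of $\SS$ combinatorially isomorphic to $DL$ (Lemma~\ref{lem:visual}); since $\tilde\sigma$ fixes the origin, its action on $\SS = \partial\HHH$ is by a Euclidean orthogonal transformation of the ambient space, hence by an isometry of the standard spherical metric, namely the reflection $\phi$ in the great circle $\Gamma := \partial_\infty \Pi$, and this $\phi$ realizes $\sigma$ combinatorially on $T$.

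The five vertices of $\partial L$ in $DL$ are fixed by $\sigma$, so their images in $T$ are fixed by $\phi$ and therefore lie on $\Gamma$. The five geodesic edges of $\partial L$ in $T$ join these points along $\Gamma$ and form a simple closed curve, so together they sweep out all of $\Gamma$. The two closed hemispheres bounded by $\Gamma$ are the realizations of $L$ and $L'$, each triangulated by the $15$ acute spherical triangles inherited from $T$. The main obstacle is the combinatorial verification that $DL$ is flag no-square: doubling a disk tends to introduce chord-less $4$-cycles whenever some interior vertex has non-adjacent boundary neighbors, and the punctured icosahedron is chosen precisely so that this cannot happen.
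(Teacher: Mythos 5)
Your argument is correct and follows essentially the same route as the paper's proof: double a disk triangulation whose double is flag no-square, realize the combinatorial dual as an all-right hyperbolic polyhedron, upgrade the combinatorial reflection symmetry to an isometry by rigidity, place the origin on the fixed plane, and read off the visual sphere decomposition; the only cosmetic differences are that the paper doubles a Maehara cap rather than the punctured icosahedron and invokes Mostow rigidity for the reflection group where you use the uniqueness clause of Andreev's theorem. Two small points to tighten: your claim that every vertex other than $v_0$ has at most two, necessarily consecutive, neighbors on $\link(v_0)$ fails for the vertices of $\link(v_0)$ themselves (though it holds for the interior vertices, which is all your case analysis needs), and ``orientation-reversing involution of $\HHH$ with non-empty fixed set'' does not by itself force a planar reflection (the point inversion is a counterexample), so you should add that your involution fixes more than one point --- e.g.\ the midpoints of the five edges of $P$ dual to the pentagon edges --- before concluding it is a reflection in a plane.
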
 
\begin{proof} Take an abstract flag no-square triangulation $L$ of a disc such that the double $DL$ is also flag no-square. For example, a Maehara cap as in Figure \ref{fig:maehara} is such a triangulation.   
%Then the double $DL$ is a triangulation of a sphere without 3- or 4- cycles.  
By Corollary \ref{cor:allright}, there is a right-angled hyperbolic polyhedron $P$ such that $C(DL)$ is the Kleinian group generated by reflections in the faces of $P$.   Each outer automorphism of $C(DL)$ is induced by conjugation in $PSL(2,\mathbb{C})$  by Mostow rigidity \cite[Theorem 6.9]{frenchorbifoldbook}.  There is an outer automorphism given by the combinatorial symmetry of  $DL$ obtained by reflecting though $\partial L$. The associated isometry must take the faces on one copy of the dual of $L$ to the corresponding faces of the other, and fix the faces corresponding to vertices of $\partial L$.  Since it is the fixed point set of an isometry, the  sub-orbifold associated to the loop $\partial L$ is totally geodesic.  Move the fundamental polyhedron $P$ by an isometry of $\HHH$ so that the origin of $\HHH$ lies on a lift of this totally geodesic suborbifold.  Then the subgraph $\partial L$ of $DL$ on the sphere at infinity in the visual sphere decomposition of $P$ will be a great circle.  This realizes the triangulation of the disc as a acute triangulation of a hemisphere. 
\end{proof}

\section{Further questions} \label{s:further} 
\subsection{Two invariants of triangulations}
Let us describe two invariants $\alpha(L)$ and $\beta(T)$ of triangulations of $\mathbb{S}^2$, the first for an abstract triangulation $L$ and the second for an acute triangulation $T$.

For an abstract triangulation $L$ of $\mathbb{S}^2$ we define 
$\alpha(L)$
as the infimum of the value $\alpha$
where $L$ can be realized as a geodesic triangulation such that each dihedral angle is at most $\alpha$. For example, if $L$ is the icosahedral graph then $\alpha(L)$ is $2\pi/5$.
Corollary~\ref{cor:main} shows that $\alpha(L)$ is less than $\pi/2$ if and only if $L$ is flag-no-square.

\begin{question}\label{que:phi}
Given an abstract triangulation $L$ of $\mathbb{S}^2$, what is the combinatorial description of $\alpha(L)$?
\end{question}

It will be an interesting task to compare $\alpha(L)$ to another graph invariant, called \emph{Colin de Verdi\`ere number}~\cite{Verdiere1990,Verdiere1993}. This number reveals, among other things, whether or not a given abstract triangulation is the combinatorial nerve of an all-right hyperbolic polyhedron~\cite[Theorem 1.5]{KLV1997}. In other words, Colin de Verdi\'ere number can detect whether or not $\alpha(L)$ is less than $\pi/2$.

For an abstract triangulation $L$ of $\mathbb{S}^2$, let us denote by $\mathcal{A}(L)$ the space of acute triangulations realizing $L$.
We can naturally embed $\mathcal{A}(L)$ into $(\mathbb{S}^2)^{L^{(0)}}$ and so talk about the topology of $\mathcal{A}(L)$. Note that $\mathcal{A}(L)$ is $|L^{(0)}|$-dimensional since one may perturb a vertex of an acute triangulation and the triangulation will remain acute.   We do not know the basic facts about the topology of $\mathcal{A}(L)$.  For example 
\begin{question} \label{q:connected} Is $\mathcal{A}(L)$ connected? \end{question} 

We say that an acute triangulation $T$ in $\mathcal{A}(L)$ is \emph{geometric} if $T$ is the visual sphere decomposition of a right-angled hyperbolic polyhedron $P$ and $C(L)$ is the Kleinian group generated by reflections in the faces of $P$.   By Mostow rigidity, this visual sphere decomposition is unique up to M\"obius transformations.  Theorem \ref{th:main racg} says that when $\mathcal{A}(L)$ is non-empty, it contains a geometric triangulation.  However, our theorem does not describe how to find a geometric triangulation. 

For each acute triangle $R$ in $\mathbb{S}^2$, there uniquely exists a slanted cube $Q_R$ in $\HHH$ realizing the hyperbolic duality between $R$ and $R_{2,2,2}$. If $T$ is an acute triangulation of $\mathbb{S}^2$, we can define a \emph{volume function}:
\[
\beta(T) = \sum_{R\in T^{(2)}} \mathrm{Volume}(Q_R).\]

Then $\beta(T)$ defines an invariant of the acute triangulation $T$. This invariant might be used to explore the topological properties of $\mathcal{A}(L)$.

\begin{question}\label{que:space}
Let $L$ be an abstract triangulation of $\mathbb{S}^2$.
\begin{enumerate}[(1)]
\item
Is $\{\beta(T)\co T\in \mathcal{A}(L)\}$ an interval?
\item
%We know from Theorem~\ref{th:main racg} that $\mathcal{A}(L)\ne\varnothing$ if and only if the combinatorial dual of $L$ is realized as an all-right polyhedron $P_L\in \HHH$. 
Does $\beta(T)$ take its maximum for $T\in \mathcal{A}(L)$ if and only if $T$ is geometric? 
%\item Does $\alpha(L)$ obtain its minimum at a geometric triangulation? 
\item Is the infimum value $\alpha(L)$ attained at a geometric triangulation? 
\end{enumerate}
\end{question}

\iffalse
\subsection{General Coxeter groups}
Theorem~\ref{thm:main general} might possibly generalize to all Coxeter groups.

\begin{question}\label{que:main general}
Let $(L,\mathbf{m})$ be a labeled abstract triangulation of $\mathbb{S}^2$ such that each face of $L$ induces a finite Coxeter group in $W(L,\mathbf{m})$.
Are the following two conditions equivalent?
\begin{enumerate}[(1)]
\item
$L$ can be realized as a geodesic triangulation that is subordinate to $(L,\mathbf{m})$.
\item
$W(L,\mathbf{m})$ is one-ended and word-hyperbolic
\end{enumerate}
\end{question}
The direction (2)$\Rightarrow$(1) is proved in Theorem~\ref{thm:main general}.
The opposite direction (1)$\Rightarrow$(2) for the case that each triangle carries the label of the form $(2,2,p)$ is Theorem~\ref{thm:main 22p}.
Considering the Fat Cat Conjecture (Conjecture~\ref{conj:fat cat}) it is plausible that ``subordinate but $\epsilon$--close'' condition might be more natural in (1)$\Rightarrow$(2) direction.
\fi

\subsection{Higher-dimensional cases} \label{s:higher}

Can we classify acute triangulations of higher dimensional spheres? Let us first determine the existence.

\begin{proof}[Proof Proposition~\ref{prop:higher}]
%A spherical simplex is called \emph{proper} if it is contained in a hemisphere.
%A triangulation of a sphere consisting of proper simplices is called also \emph{proper}.
%Note that an acute simplex is proper.
%Let us prove a stronger fact that for $d\ge4$, the $d$--dimensional sphere does not admit a proper triangulation.
%Suppose $L$ is a proper triangulation of $\mathbb{S}^d$.

Suppose $L$ is an acute triangulation of $\mathbb{S}^d$, with $d\geq4$.
Since the infinite cone in $\mathbb{R}^{d+1}$ over each simplex in $L$ is convex, the convex hull of $L$ is an Euclidean polytope of dimension at least $5$, say $P\subseteq\mathbb{R}^{d+1}$. 
Kalai proved that every $n\ge 5$--dimensional Euclidean polytope has a $2$--dimensional face which is a triangle or a quadrilateral~\cite{Kalai1990}; see also~\cite[Theorem 6.11.6]{Davisbook}.
By the duality of the face lattice, this means that $P$ has a face of dimension $d-2$ that is shared by three or four faces of dimension $d-1$. Since $L$ is combinatorially isomorphic to $\partial P$, it follows that one of the dihedral angles of $L$ is at least $\pi/2$. Note that the $600$--cell is an example of an acute triangulation of $\mathbb{S}^3$.
\end{proof}
\begin{question}\label{que:higher}
Is there a combinatorial characterization for the acute triangulations of $\mathbb{S}^3$ or $\mathbb{E}^3$?
\end{question}

We believe that the question for an acute triangulation of $\mathbb{E}^3$ is particularly relevant to this paper, since the link of a vertex in such a triangulation will be an acute triangulation of $\mathbb{S}^2$. Acute triangulations of $\mathbb{S}^3$ may be related to 4-dimensional right-angled hyperbolic polytopes, which are not yet combinatorially classified. 

\section{Acknowledgements}
S. Kim is supported by Basic Science Research Program through the National Research Foundation of Korea (NRF) funded by the Ministry of Education, Science and Technology (2013R1A1A1058646). S. Kim is also supported by Samsung Science and Technology Foundation (SSTF-BA1301-06) and by Research Resettlement Fund for the new faculty of Seoul National University.
G.~S.~Walsh is supported by U. S. National Science Foundation (NSF) Grant 1207644. 
We are grateful to Andrew Casson, Ruth Charney, Andreas Holmsen, Gil Kalai, Jon McCammond and Bill Thurston for valuable and enjoyable conversations.
We especially thank Sang-il Oum for providing us a motivating example in Figure~\ref{fig:oum} (a).

\def\soft#1{\leavevmode\setbox0=\hbox{h}\dimen7=\ht0\advance \dimen7
  by-1ex\relax\if t#1\relax\rlap{\raise.6\dimen7
  \hbox{\kern.3ex\char'47}}#1\relax\else\if T#1\relax
  \rlap{\raise.5\dimen7\hbox{\kern1.3ex\char'47}}#1\relax \else\if
  d#1\relax\rlap{\raise.5\dimen7\hbox{\kern.9ex \char'47}}#1\relax\else\if
  D#1\relax\rlap{\raise.5\dimen7 \hbox{\kern1.4ex\char'47}}#1\relax\else\if
  l#1\relax \rlap{\raise.5\dimen7\hbox{\kern.4ex\char'47}}#1\relax \else\if
  L#1\relax\rlap{\raise.5\dimen7\hbox{\kern.7ex
  \char'47}}#1\relax\else\message{accent \string\soft \space #1 not
  defined!}#1\relax\fi\fi\fi\fi\fi\fi}
\providecommand{\bysame}{\leavevmode\hbox to3em{\hrulefill}\thinspace}
\providecommand{\MR}{\relax\ifhmode\unskip\space\fi MR }
% \MRhref is called by the amsart/book/proc definition of \MR.
\providecommand{\MRhref}[2]{%
  \href{http://www.ams.org/mathscinet-getitem?mr=#1}{#2}
}
\providecommand{\href}[2]{#2}

\end{document}